\documentclass[a4paper,reqno,11pt]{amsart}
\usepackage{amsfonts,amsmath,amsthm,amssymb,stmaryrd}
\usepackage{color}
\allowdisplaybreaks[4]

\newtheorem{Theorem}{Theorem}[section]

\newtheorem{Lemma}[Theorem]{Lemma}
\newtheorem{Proposition}{Proposition}[section]
\newtheorem{Remark}{Remark}[section]

\newcommand{\beq}{\begin{equation}}
\newcommand{\eeq}{\end{equation}}
\newcommand{\ben}{\begin{eqnarray}}
\newcommand{\een}{\end{eqnarray}}
\newcommand{\beno}{\begin{eqnarray*}}
\newcommand{\eeno}{\end{eqnarray*}}

\newcommand{\pt}{\partial_{t}}
\newcommand{\px}{\partial_{x}}
\newcommand{\py}{\partial_{y}}
\newcommand{\pz}{\partial_{z}}
\newcommand{\ptau}{\partial_{\tau}}

\newcommand{\ty}{\infty}

\newcommand{\R}{\mathbb{R}}
\newcommand{\N}{\mathbb{N}}

\newcommand{\T}{\mathbb{T}}

\newcommand{\de}{\delta}

\newcommand{\la}{\lambda}
\newcommand{\ga}{\gamma}
\newcommand{\al}{\alpha}
\newcommand{\si}{\sigma}

\newcommand{\mH}{\mathcal{H}}

\newcommand{\ep}{\epsilon}

\def\p{\partial}

\newcommand{\Om}{\Omega}
\newcommand{\om}{\omega}

\numberwithin{equation}{section} \allowdisplaybreaks
\usepackage[left=1 in, right=1 in,top=1 in, bottom=1 in]{geometry}
\begin{document}
\title[Local well-posedness of the boundary layer]{\bf Local well-posedness of the boundary layer for a pseudo-plastic fluid by energy methods}
\author{Zhonger Wu$^{1}$}
\author{Zhong Tan$^{2,*}$}
\thanks{$^{1}$ Department of Mathematics, Shantou University, Shantou 515063, China }
\thanks{$^{2}$ School of Mathematical Sciences, Xiamen University, Xiamen, 361005, China }
\thanks{$^{*}$Corresponding author: Zhong Tan, tan85@xmu.edu.cn}
\thanks{Zhonger Wu, wze622520@163.com}

\begin{abstract}
We study the well-posedness of the boundary layer for a pseudo-plastic fluid by energy methods under Oleinik's monotonicity assumption. We need to address two main difficulties: derivative loss and the difficulty caused by viscosity. For the first difficulty, we borrow the cancellation mechanism proposed by Masmoudi and Wong [CPAM, 2015]. For the second difficulty, we combine the monotonicity assumption and Fa\`a di Bruno formula to obtain precise control over the high-order derivatives of viscosity, which is the main contribution of this paper.

\noindent
{\bf \normalsize Keywords:}  {Boundary layer;\, Pseudo-plastic fluid;\, Local well-posedness;\, Energy methods.}\bigbreak

\end{abstract}
\subjclass[2010]{ 76D10; 35G31.}
\maketitle

\section{Introduction}
We consider the well-posedness of the pseudo-plastic fluid boundary layer equations in a periodic domain $\{(t,x,y)|t\in[0,T],x\in\T,y\in\R_+\}$:
\begin{equation}\label{feiniu}
\left\{\begin{array}{l}
\pt u +u\px u +v\py u-\py(|\py u|^{n-1}\py u)+\px p=0,\\
\px u+\py v=0,\\
u|_{t=0}=u_0,\\
u|_{y=0}=v|_{y=0}=0,\\
u|_{y\rightarrow\ty}=U.
\end{array}\right.
\end{equation}
Here $(u,v)$ denote the velocity filed. Furthermore $(U,p)$ represent the traces of the tangential field and the pressure of the outflow on the boundary, satisfying Bernoulli's law:
\begin{equation}\label{Bernoulli}
\pt U +U\px U +\px p=0.
\end{equation}
When the power-law index $0<n<1$, the fluid is pseudo-plastic; when $n>1$, it is dilatant; and when $n=1$, it is a Newtonian fluid, in which case the equations \eqref{feiniu} are consistent with the Prandtl equations. And in this paper, we consider the pseudo-plastic case.

Let us first review the research results of the Prandtl equations, which were originally proposed by Prandtl in 1904 to account for the discrepancy between an ideal fluid and a low-viscosity fluid near a boundary. To date, there have been numerous studies on the Prandtl equations. The primary difficulty in studying the Prandtl equations lies in the loss of one tangential derivative. For the 2D case, under the assumption of monotonicity, the local existence and the uniqueness of classical solution were established by Oleinik \cite{OAO} via the Crocco transform. 
Subsequently, Xin and Zhang \cite{ZPX} proved the global existence of the weak solution under an additional favourable pressure condition.
On the other hand, in 2015, \cite{RA} and \cite{NM} independently demonstrated the local well-posedness in weighted Sobolev spaces by using energy methods. 
Together with the ill-posedness result reported in \cite{DG2010}, it is now recognized that the monotonicity assumption is almost a necessary and sufficient condition for well-posedness in Sobolev spaces. In the absence of the monotonicity assumption, the reader is referred to \cite{MI,MCL,MP,MS,PZ} for well-posedness results in analytic spaces and to \cite{DC,HD,DG2015,WXLY2,CW} for those in Gevrey spaces.

For the 3D case, due to the emergence of the secondary flow, the monotonicity assumption alone is insufficient to guarantee the well-posedness of the Prandtl equations in Sobolev space. According to \cite{CJLARMA,CJLADV}, it is known that $\pz(\dfrac{u}{v})\equiv0~\&~\pz u>0$ constitutes an almost necessary and sufficient structural condition for the well-posedness in Sobolev spaces. On the other hand, if there is no structural assumption, the reader is referred to \cite{WXLMY} for the well-posedness result in Gevrey class 2 space.

In addition, the reader is referred to \cite{WXLY,CJLCPAM,ZT,ZEW} for the effect of the magnetic field on the boundary layer and to \cite{ZT2,YGW1,YGW2} for the effect of temperature on the boundary layer.

Compared with Newtonian fluids, there is very little mathematical theory concerning the boundary layer equations for non-Newtonian fluids, and the existing results are mainly focused on numerical ones. In Chapter 8 of \cite{OAO}, Oleinik and Samokhin proved the local existence and uniqueness of classical solutions to the 2D non-Newtonian boundary layer equations by using the Crocco transform. Zhang \cite{JWZ} obtained similar results for another special type of outer flow, also by means of the Crocco transform.

Inspired by the existing literature, in this work we investigate the well-posedness of the boundary layer for a pseudo-plastic fluid by energy methods under Oleinik's monotonicity assumption.

Under the assumption of monotonicity ($\py u>0$), the system \eqref{feiniu} can be simplified to the following
\begin{equation}\label{feiniu2}
\left\{\begin{array}{l}
\pt u +u\px u +v\py u-n(\py u)^{n-1}\py^2 u+\px p=0,\\
\px u+\py v=0,\\
u|_{t=0}=u_0,\\
u|_{y=0}=v|_{y=0}=0,\\
u|_{y\rightarrow+\ty}=U.
\end{array}\right.
\end{equation}
And equivalently, the vorticity $\om:=\py u>0$ satisfies the following vorticity system:
\begin{equation}\label{wodu}
\left\{\begin{array}{l}
\pt \om +u\px \om +v\py \om-n\om^{n-1}\py^2 \om-n(n-1)
\om^{n-2}|\py\om|^2=0,\\
\om|_{t=0}:=\om_0=\py u_0,\\
n\om^{n-1}\py\om|_{y=0}=\px p,\\
\om|_{y\rightarrow+\ty}=0.
\end{array}\right.
\end{equation}

For convenience, we define
$$\Omega:=\{(x,y)|x\in\T,y\in\R_+\}.$$
Then, for any $\gamma\in\R$, use $L_{\gamma}^2(\Omega)$ to denote the weighted Lebesgue space:
\begin{align*}
L_{\gamma}^2:=\left\{f(x,y):\Om\rightarrow\R,~
\|f\|_{L_{\gamma}^2}:=\|(1+y)^{\ga}f\|_{L^2}
<+\ty\right\}
\end{align*}
Accordingly the weighted Sobolev space $H^{s,\ga}$:
\begin{align*}
H^{s,\ga}:=\left\{f(x,y):\Om\rightarrow\R,~
\|f\|_{H^{s,\ga}}^2:=
\sum\limits_{\al_1+\al_2\leq s}\|(1+y)^{\ga+\al_2}\px^{\al_1}\py^{\al_2}f\|_{L^2}^2
<+\ty\right\}
\end{align*}

Now, we can state our main result as follows. 
\begin{Theorem}\label{zhuding}
Assume $\frac{1}{3}<n<1$. Let $s\geq4,~\ga\geq1,~\ga+\frac{1}{2}<\si
\leq\min\{\frac{1}{1-n},\ga+\frac{2}{3}\}$. And we suppose the outer flow $(U,p)$ satisfies
\begin{equation}\label{wailiushangjie}
M:=\sum\limits_{i=0}^{2s+2}
\left(\sup\limits_{t}\|\pt^i
(U,p)(t,\cdot)\|_{H^{3s+2-i}(\T)}\right)<+\ty.
\end{equation}
Assume that the initial velocity $u_0-U|_{t=0}\in H^{3s+2,\ga-1}$ and the initial vorticity $\om_0\in H^{3s+2,\ga}$.
More, there exist constants $c_1$, $c_2>0$ such that for $(x,y)\in\Om$,
\begin{equation}\label{chushiwodu000}
(1+y)^\sigma \omega_0 \geq c_1,
\end{equation}
\begin{equation}\label{chushiwodu111}
\sum_{\al_1+\al_2\leq 4}\left|(1+y)^{\sigma+\alpha_2} \px^{\al_1}\py^{\al_2}\omega_0\right|^2 \leq c_2.
\end{equation}
Then there exist a time $T_*>0$ and a unique solution $(u,v)$ to the pseudo-plastic fluid boundary layer equations \eqref{feiniu} such that
\begin{equation}\label{solution11}
u-U \in \bigcap_{i=0}^s W^{i, \infty}\left(0, T_* ; H^{s-i,\ga-1}(\Omega)\right),
\end{equation}
\begin{equation}\label{solution22}
v+U_x y \in \bigcap_{i=0}^{s-1} W^{i, \infty}\left(0, T_* ; H^{s-1-i,-1}(\Omega)\right),
\end{equation}
\begin{equation}\label{solution33}
\py v+U_x \in \bigcap_{i=0}^{s-1} W^{i, \infty}\left(0, T_* ; H^{s-1-i,\ga-1}(\Omega)\right),
\end{equation}
and the vorticity 
\begin{equation}\label{solution44}
\om \in \bigcap_{i=0}^s W^{i, \infty}\left(0, T_* ; H^{s-i,\ga}(\Omega)\right).
\end{equation}
In addition, if $\ga>\frac{3}{2}$,
\begin{equation}\label{solution55}
v+U_x y \in \bigcap_{i=0}^{s-1} W^{i, \infty}\left(0, T_* ; L^{\ty}\left(\R_{y,+};H^{s-1-i}(\T_{x})\right)\right).
\end{equation}
\end{Theorem}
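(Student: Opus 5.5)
The plan follows the Masmoudi--Wong energy method adapted to the quasilinear viscosity $n\om^{n-1}$. First regularize: add a small parabolic term $\epsilon\px^2$ to \eqref{feiniu2} and \eqref{wodu}, and mollify the data while keeping \eqref{chushiwodu000}--\eqref{chushiwodu111} with slightly worse constants. Next construct compatible initial data, using the regularity assumption \eqref{wailiushangjie} on $(U,p)$ to match the boundary conditions up to order $s$. Local existence for the regularized system then follows from standard parabolic theory, since $\om>0$ makes $n\om^{n-1}$ a positive, smooth diffusion coefficient. The core is an $\epsilon$-uniform a priori estimate on $[0,T_*]$ for
\[
E(t):=\|\om\|_{H^{s,\ga}_g}^2+\|u-U\|_{H^{s,\ga-1}}^2,
\]
where $H^{s,\ga}_g$ is a weighted norm. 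We propagate two bounds for a short time by a maximum-principle/continuity argument: the lower bound $(1+y)^\si\om\geq c_1/2$, and the upper bound $\sum_{|\al|\le 2}|(1+y)^{\si+\al_2}D^\al\om|\le 2c_2$. The restriction $\si\le \frac1{1-n}$ is exactly what makes $(1+y)^{-\si}$ a subsolution-compatible barrier for the degenerate diffusion $n\om^{n-1}\py^2$ at infinity. The condition $\si\le\ga+\frac23$ is needed later to absorb weights.

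The derivative loss in the purely tangential derivatives $\px^s$ is handled by the Masmoudi--Wong good unknown
\[
g_s:=\px^s\om-\frac{\py\om}{\om}\px^s(u-U).
\]
Because $u$ and $\om$ satisfy compatible equations, the term $\px^s v\,\py\om$ cancels in the equation for $g_s$. The new feature here is that the viscosity term is $n\om^{n-1}\py^2\om+n(n-1)\om^{n-2}|\py\om|^2$. Applying $\px^s$ and commuting with the factor $\py\om/\om$ produces extra terms involving $\px^s(\om^{n-1})$. Mixed derivatives $\px^{\al_1}\py^{\al_2}$ with $\al_2\ge1$ are estimated directly through the vorticity equation, since one $y$-derivative of $u$ can be traded for $\om$. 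The boundary terms at $y=0$ are reduced using the Neumann condition $n\om^{n-1}\py\om|_{y=0}=\px p$ and its derivatives. This reduction expresses $\py^{2k+1}\om|_{y=0}$ in terms of lower-order quantities and $p$, and this is why $2s+2$ time derivatives of $(U,p)$ appear in \eqref{wailiushangjie}.

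The main obstacle, and the step I expect to be hardest, is controlling the high-order derivatives of $\om^{n-1}$ and $\om^{n-2}$ with the correct weights. Because $n<1$, these coefficients grow like $(1+y)^{\si(1-n)}$ as $y\to\infty$. I would apply the Fa\`a di Bruno formula,
\[
D^\al(\om^{n-1})=\sum c\,\om^{n-1-k}\prod_{j=1}^k D^{\beta_j}\om .
\]
Each factor $\om^{-1}$ is bounded by $(1+y)^\si/c_1$ from the propagated lower bound. Each low-order $D^{\beta_j}\om$ is bounded by $(1+y)^{-\si-\beta_{j,2}}$ from the upper bound, so the weights exactly cancel except for the leading power $(1+y)^{\si(1-n)}$. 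The single top-order factor goes in $L^2_\ga$. The leftover growth $(1+y)^{\si(1-n)}\le 1+y$ must be absorbed by the dissipation $\int n\om^{n-1}|\py D^\al\om|^2(1+y)^{2\ga+2\al_2}$, whose weight itself behaves like $(1+y)^{2\ga+2\al_2-\si(1-n)}$. I would check by Hardy-type inequalities that this requires precisely $\ga+\frac12<\si\le\ga+\frac23$ together with $n>\frac13$. For lower-order products I would use Sobolev embedding, which needs $s\ge4$.

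The result of these estimates is a Gronwall-type inequality
\[
\frac{d}{dt}E\le C(M,c_1,c_2)(1+E)^{k}.
\]
This gives uniform bounds on $[0,T_*]$, and the continuity argument closes provided $T_*$ is small. We then pass to the limit $\epsilon\to0$ by compactness. The regularity statements \eqref{solution22}, \eqref{solution33} and \eqref{solution55} follow from $v=-\int_0^y\px u$. Uniqueness follows from a weighted $L^2$ estimate on the difference of two solutions, again using the good-unknown cancellation at the lowest order.
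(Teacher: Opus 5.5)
Your plan does not prove the theorem as stated. It uses only spatial derivatives, a single good unknown $g_s=\partial_x^s\omega-\frac{\partial_y\omega}{\omega}\partial_x^s(u-U)$, and the odd-order reduction of $\partial_y^{2k+1}\omega|_{y=0}$. That is the route of Section~\ref{lingyizhong} (Theorem~\ref{zhuding2} with Lemma~\ref{bianjiejiangjie}), and it falls short in two ways.

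\textbf{It closes only for even $s$.} For the spatial index $(0,s)$, integrating the viscous term by parts leaves
$n\int_{\mathbb{T}}\omega^{n-1}\partial_y^s\omega\,\partial_y^{s+1}\omega|_{y=0}\,dx$.
\begin{itemize}
\item If $s$ is even, $\partial_y^{s+1}\omega|_{y=0}$ has odd order and reduces to at most $s$ derivatives. The trace inequality \eqref{L1} then closes against the dissipation.
\item If $s$ is odd, only the factor $\partial_y^{s}\omega$ can be reduced, and there is no $\partial_x$ to move by integration by parts. \eqref{L1} would then require $\partial_y^{s+2}\omega\in L^2$, which nothing controls.
\end{itemize}
Theorem~\ref{zhuding} is claimed for every integer $s\ge4$.

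\textbf{It does not give the stated time regularity.} An $L^\infty_t H^{s,\gamma}_{x,y}$ bound yields only $\partial_t^i\omega\in H^{s-2i}$ through the equation, by parabolic scaling. So it cannot deliver \eqref{solution11}, \eqref{solution33} or \eqref{solution44}, where each time derivative costs only one derivative.

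\textbf{The missing idea is to put time derivatives into the energy.}
\begin{itemize}
\item The paper works with $D^\alpha=\partial_t^{\alpha_1}\partial_x^{\alpha_2}\partial_y^{\alpha_3}$ in $\mathcal{H}^{s,\gamma}$. The pointwise constraint in $\mathcal{H}^{s,\gamma}_{\sigma,\delta}$ also includes $\partial_t$.
\item It uses good unknowns $g_\alpha=\partial_\tau^\alpha\omega-a\,\partial_\tau^\alpha(u-U)$ for every $\alpha_1+\alpha_2=s$. This is needed because $\partial_\tau^\alpha v\,\partial_y\omega$ loses a derivative for mixed time--tangential $\alpha$ as well.
\item For top-order boundary terms with $\alpha_3\ge1$, write $\alpha=\tilde\alpha+e_3$. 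The factor $n\omega^{n-1}\partial_y D^\alpha\omega|_{y=0}$ is replaced, via the order-$(s-1)$ equation \eqref{K146}, by $\partial_t D^{\tilde\alpha}\omega$ plus lower-order terms. This has order $s$ in the space-time count, so \eqref{L1} closes with no parity restriction.
\item The boundary terms for $g_\alpha$ use the $\partial_\tau^\alpha$-differentiated Neumann condition \eqref{nandian}.
\item As a by-product, the lower bound on $(1+y)^\sigma\omega$ follows simply by integrating $\partial_t\omega$ in time, as in \eqref{omdebianjie}.
\end{itemize}

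\textbf{A secondary point concerns the Fa\`a di Bruno step.} Your exact cancellation of weights holds only for factors with $|\beta_j|\le2$. Intermediate factors with $3\le|\beta_j|\le s-2$ need the interpolated decay $b_\alpha$ of Proposition~\ref{shuaijian}. That is where $\sigma\le\frac{1}{1-n}$ and $\sigma\le\gamma+\frac23$ actually enter (see \eqref{K46} and \eqref{K460}). Also, since $\omega^{n-1}\sim(1+y)^{\sigma(1-n)}$, the viscosity strengthens the dissipation weight rather than weakening it.
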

\begin{Remark}
We require $\frac{1}{3}<n<1$ because we need to ensure $\frac{1}{1-n}>\frac{3}{2}$; otherwise, there would be no $\ga$ and $\si$ satisfying $\ga\geq1$ and $\ga+\frac{1}{2}<\si
\leq\min\{\frac{1}{1-n},\ga+\frac{2}{3}\}$.
\end{Remark}
\begin{Remark}
It should be noted that the assumption on the outflow $(U,p)$ and the initial data $(u_0,\om_0)$ is not optimal. Here, we need this regularity to simplify the regularization procedure when constructing approximate solutions.
\end{Remark}
\begin{Remark}
\eqref{chushiwodu000} and \eqref{chushiwodu111} are used to ensure that \eqref{chushiwodu222} holds.
\end{Remark}

The key step in proving Theorem \ref{zhuding} is to obtain a priori estimates for the vorticity system \eqref{wodu}; see Proposition \ref{xianyanguji} for details. Based on Proposition \ref{xianyanguji}, a standard regularization procedure yields the local-in-time existence and uniqueness. Therefore, we only prove Proposition \ref{xianyanguji} in Section \ref{zhumingti} and omit the regularization procedure for brevity. The reader may refer to \cite[Sec. 4]{CJLCPAM} or \cite{NM} for a detailed account of the regularization procedure.
\section{Preliminaries}\label{zhunbei}
Let's first introduce some notation. Denote derivative (in both space and time) operator by
$$D^{\al}=\pt^{\al_1}\px^{\al_2}\py^{\al_3}
~~\text{for}~~\al=(\al_1,\al_2,\al_3)\in\N^3,~~
|\al|=\al_1+\al_2+\al_3.$$
For ease of expression, when we want to emphasize $\al_{3}=0$, we will use $\ptau^{\al}$ to denote $D^{\al}$, that is
$$\ptau^{\al}=\pt^{\al_1}\px^{\al_2}
~~\text{for}~~\al=(\al_1,\al_2,0)\in\N^3.$$
And let $e_i\in\N^3,~i=1,2,3$ be the following
$$e_1=(1,0,0),~e_2=(0,1,0),~e_3=(0,0,1)\in\N^3.$$
Then it is obvious that
\begin{align*}
\pt=D^{e_1},
~\px=D^{e_2},~
\py=D^{e_3}.
\end{align*}

Use $\py^{-1}$ to denote the inverse of the derivative $\py$, that is $\py^{-1}f(y)=\int_0^y f(\tilde{y})d\tilde{y}.$ 
Use $\mathcal{P}(\cdot)$
to denote a nondecreasing polynomial function, which may differ from line to line. Use $C$ to denote a non-negative constant that may vary from line to line. Use $[a]$ to denote the greatest integer not exceeding $a$.

In order to solve the pseudo-plastic fluid boundary layer equations \eqref{feiniu} in Sobolev spaces, we need to overcome the following two main difficulties:
\begin{itemize}
  \item Derivative loss. From the divergence-free condition and the boundary condition $\eqref{feiniu}_4$, we know that $v=-\py^{-1}\px u$, which creates a loss of the $x$-derivative, hence we can't use the standard energy estimates.
  \item The difficulty caused by viscosity. Specifically, $\om^{n-1}$ and its higher-order derivatives diverge as $y\rightarrow+\ty$.
\end{itemize}

The core idea to overcome the above two difficulties is to use the monotonicity assumption. To this end, we first define the function space $\mH^{s,\gamma}_{\si,\de}$ for $\om$ by
$$
\begin{aligned}
\mH_{\sigma, \delta}^{s, \gamma}:= & \bigg\{\omega: \Om \rightarrow \mathbb{R}:\|\omega\|_{\mH^{s, \gamma}}<+\infty,(1+y)^\sigma \omega \geq \delta, \\
& \text { and } \sum_{|\alpha| \leq 2}\left|(1+y)^{\sigma+\alpha_3} D^\alpha \omega\right|^2 \leq \frac{1}{\delta^2}\bigg\},
\end{aligned}
$$
where $s\geq4,~\ga\geq1,~\si>\ga+\frac{1}{2},~\de\in(0,1)$. Apart from that, the weighted norm $\|\cdot\|_{\mH^{s, \gamma}}$ denotes 
\begin{equation}\label{hsga}
\|\omega\|_{\mH^{s, \gamma}}^2:=\sum_{|\alpha| \leq s}\left\|(1+y)^{\gamma+\alpha_3} D^\alpha \omega\right\|_{L^2\left(\Om\right)}^2.
\end{equation}
Correspondingly, we also denote
$$
\mH^{s, \gamma}:=\left\{\omega: \Om \rightarrow \mathbb{R}:\|\omega\|_{\mH^{s, \gamma}}<+\infty\right\}.
$$
\begin{Remark}
In the definition of $\mH^{s,\gamma}_{\si,\de}$, we specify that $\si>\ga+\frac{1}{2}$; if $\si\leq\ga+\frac{1}{2}$, then $\mH^{s,\gamma}_{\si,\de}$ would be an empty set.
\end{Remark}
\begin{Remark}
Note that our function space is slightly different from that in \cite{NM}. In our paper, $\mH^{s,\gamma}_{\si,\de}$ includes time derivatives, whereas \cite{NM} does not. This is because we wish to remove the restriction that $s$ is even, and therefore we adopt a different treatment of the boundary terms (see \eqref{K140}) from that in \cite{NM}. However, to facilitate the reader's comparison of the results obtained in the two function spaces, we will present the results in an alternative function space in Section \ref{lingyizhong}.
\end{Remark}

For the first difficulty, we refer to the cancellation mechanism proposed in \cite{NM} and define the following norm:
$$
\|\om\|_{\mH^{s, \gamma}_g}^2=
\sum_{\substack{|\al|\leq s \\ \al_1+\al_2\leq s-1}}
\|D^{\al}\om\|^2_{L^2_{\ga+\al_3}}
+\sum_{\al_1+\al_2=s}
\|g_{\al}\|^2_{L^2_{\ga}},
$$
where $g_{\al}:=\ptau^{\al}\om-a\ptau^{\al}(u-U)$ and $a:=\frac{\py\om}{\om}$. 

On the one hand, $g_{\al}$ can avoid the loss of $x$-derivative; see Section \ref{zuoqiexiangguji} for details.
On the other hand, we know from \cite[Appendix A]{NM} that there exist positive constants $C_1$ and $C_2$ such that
\begin{equation}\label{chabuduo}
\begin{split}
C_1\|\om\|_{\mH^{s, \gamma}_g}
\leq \|u-U\|_{\mH^{s, \ga-1}}
+\|\omega\|_{\mH^{s, \gamma}}
\leq C_2(\|\om\|_{\mH^{s, \gamma}_g}+\sum_{\al_1+\al_2=s}
\|\ptau^{\al} U\|_{L^2_{x}(\T)}).
\end{split}
\end{equation}
Therefore, in order to overcome the first difficulty, we will estimate $\|\om\|_{\mH^{s, \gamma}_g}$ instead of $\|\omega\|_{\mH^{s, \gamma}}$.

For the second difficulty, we need to precisely characterize the decay rates of $\om^{n-1}$ and its higher-order derivatives as $y\rightarrow+\ty$.

To handle higher-order derivatives of $\om^{n-1}$, we need the following Fa\`a di Bruno formula; see \cite{JA}.
\begin{Lemma}[Fa\`a di Bruno formula]\label{faa}
For a multi-index $\al=(\al_1,\al_2,\al_3)$ and a real number $m$, let $f(t,x,y)>0$ be sufficiently smooth. Then there holds
\begin{equation}\label{faa1}
D^\alpha (f^m) = \sum_{r=1}^{|\al|} m^{\underline{r}} \, f^{m-r} \sum_{\substack{\beta^1 + \cdots + \beta^r = \alpha \\ |\beta^i| \ge 1}} \frac{\alpha!}{r!} \prod_{i=1}^r \frac{D^{\beta^i} f}{\beta^i!},
\end{equation}
where:
\begin{itemize}
  \item $m^{\underline{r}}=m(m-1)\cdots(m-r+1)$ is the falling factorial (this term vanishes when $r>m$ if $m$ is an positive integer.)
  \item $\al!=\al_1!\al_2!\al_3!$, and similarly for $\beta^i!$.
  \item The inner sum runs over all ordered $r$-tuples $(\beta^1,\cdots,\beta^r)$ of non-zero multi-indices whose sum equals $\al$.
\end{itemize}
\end{Lemma}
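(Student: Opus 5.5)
We prove \eqref{faa1} by induction on $|\al|$. It is convenient to rewrite the right-hand side in terms of set partitions. For a positive integer $N$, the classical one-variable Fa\`a di Bruno formula can be phrased as
\[
\p_{z_1}\cdots\p_{z_N} (f^m)=\sum_{\pi}m^{\underline{|\pi|}}f^{m-|\pi|}\prod_{B\in\pi}\p_B f,
\]
where $\pi$ runs over all partitions of $\{1,\dots,N\}$ into $|\pi|$ blocks and $\p_B=\prod_{j\in B}\p_{z_j}$.

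\textbf{Step 1: the set-partition formula.} We prove it by induction on $N$. The case $N=0$ is trivial, and for $N=1$ we have $\p(f^m)=mf^{m-1}\p f$; here $f>0$ makes $f^m$ smooth for real $m$. For the inductive step, apply $\p_{z_{N+1}}$ to a term $m^{\underline{k}}f^{m-k}\prod_{B\in\pi}\p_Bf$. There are two possibilities.
\begin{itemize}
\item The derivative falls on $f^{m-k}$. This gives $m^{\underline{k}}(m-k)f^{m-k-1}\p_{z_{N+1}}f=m^{\underline{k+1}}f^{m-k-1}\p_{z_{N+1}}f$, which corresponds to adding $\{N+1\}$ as a new singleton block.
\item The derivative falls on some $\p_Bf$. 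This corresponds to inserting $N+1$ into the block $B$.
\end{itemize}
Every partition of $\{1,\dots,N+1\}$ arises exactly once in this way, which closes the induction.

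\textbf{Step 2: specialization to a multi-index.} Take $N=|\al|$ and set the first $\al_1$ variables equal to $t$, the next $\al_2$ equal to $x$, and the last $\al_3$ equal to $y$. Each partition $\pi$ then produces a multiset of nonzero multi-indices $\beta$, obtained by counting the $t$, $x$ and $y$ slots in each block, and these multi-indices sum to $\al$. It remains to count partitions. Consider the ordered $r$-tuples of disjoint labelled blocks $(B_1,\dots,B_r)$ whose types are $(\beta^1,\dots,\beta^r)$. Their number is the multinomial
\[
\prod_{j=1}^3\frac{\al_j!}{\beta^1_j!\cdots\beta^r_j!}=\frac{\al!}{\prod_i\beta^i!},
\]
since for each coordinate direction the slots are distributed among the blocks independently. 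Each unordered partition with $r$ blocks corresponds to exactly $r!$ ordered tuples, because the blocks are distinct nonempty sets. Summing over ordered tuples and dividing by $r!$ therefore gives exactly
\[
\frac{\al!}{r!}\prod_i\frac{D^{\beta^i}f}{\beta^i!},
\]
which is \eqref{faa1}.

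When $m$ is a positive integer, $m^{\underline r}=0$ for $r>m$, so those terms vanish automatically, consistent with the remark in the statement.

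\textbf{Main obstacle.} The only delicate point is the combinatorial bookkeeping in Step 2: the ordered $r$-tuples overcount each partition by exactly $r!$, even when several $\beta^i$ coincide. This holds because the labelled blocks are distinct sets, so distinct orderings of them are distinct tuples even when their types agree. With that checked, the rest is routine.
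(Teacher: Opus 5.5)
Your proof is correct. The paper gives no proof of this lemma: it quotes the formula from the literature. It then uses only its structure, namely that $D^{\beta}(\omega^{n-1})$ is a combination of terms $\omega^{n-1-r}\prod_{i=1}^{r}D^{\beta^i}\omega$ with $\beta^1+\cdots+\beta^r=\beta$ and $|\beta^i|\ge 1$. That structure is all the weight counting in \eqref{K43}--\eqref{K46} and \eqref{K430}--\eqref{K460} requires.

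Your argument is self-contained: you prove the set-partition form by induction using only the Leibniz and chain rules, then count ordered labelled block tuples of a prescribed type. Beyond the structure, it certifies the exact normalisation $\alpha!/(r!\prod_i\beta^i!)$ of the ordered-tuple form as stated. The citation leaves the matching of that normalisation to the reader, and your version costs only a few lines. The counting step is sound, including when several $\beta^i$ coincide, since you count labelled tuples rather than type sequences.

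Two minor points of presentation.

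First, do not phrase Step 2 as ``setting variables equal''. Restricting a function of $N$ independent variables to a diagonal turns derivatives into sums of partials. What you actually use is that $\partial_{z_1},\dots,\partial_{z_N}$ is a list of possibly repeated operators from $\{\partial_t,\partial_x,\partial_y\}$. Step 1 already permits this, since it never needs the $z_j$ to be distinct. Commutation of mixed partials then gives $\partial_B f=D^{\beta(B)}f$, where $\beta(B)$ records how many $t$-, $x$- and $y$-slots $B$ contains.

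Second, for $\alpha=0$ the right-hand side of \eqref{faa1} is an empty sum while the left-hand side is $f^m$. In your proof this case is the empty partition, i.e.\ an $r=0$ term that the statement omits. So the lemma should be read for $|\alpha|\ge 1$, which is the only case the paper uses.
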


Moreover, for $\om\in\mH^{s,\gamma}_{\si,\de}$, we have the following decay property of $D^{\al}\om$ as $y$ goes to $+\ty$, which is similar to \cite[Remark C.4]{NM}.
\begin{Proposition}\label{shuaijian}
Let $s\geq5$ be an integer, $\ga\geq1,~\si>\ga+\frac{1}{2}$ and $\de\in(0,1)$. If $\om\in\mH^{s,\gamma}_{\si,\de}$, there exists a constant $C>0$, which depends on $s,~\ga,~\si$ and $\de$, such that for all $|\al|\leq s-2$,
\begin{equation}\label{shuaijian1}
|D^{\al}\om|\leq Cc_\alpha(1+y)^{-b_{\al}}\quad\text{in}~\T\times\R_+
\end{equation}
where 
\begin{equation}\label{shuaijian2}
b_\alpha := 
\begin{cases} 
\sigma + \alpha_3 & \text{if } |\alpha| \leq 2, \\ 
\frac{(s-2 - |\alpha|)\sigma + (|\alpha| - 2)\gamma}{s-4} + \alpha_3 & \text{if } 3 \leq |\alpha| \leq s-3, \\ 
\gamma + \alpha_3 & \text{if } |\alpha| = s-2,
\end{cases}
\end{equation}
and 
\begin{equation}\label{shuaijian3}
c_\alpha := 
\begin{cases} 
1 & \text{if } |\alpha| \leq 2, \\ 
1+\|\omega\|_{\mH^{s, \gamma}} & \text{if } 3 \leq |\alpha| \leq s-3, \\ 
\|\omega\|_{\mH^{s, \gamma}} & \text{if } |\alpha| = s-2.
\end{cases}
\end{equation}
\end{Proposition}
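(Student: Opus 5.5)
The proof splits into three regimes according to $|\al|$; the middle regime comes from interpolating between the two endpoint estimates.

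\emph{Case $|\al|\le 2$.} Here the bound is immediate from the definition of $\mH^{s,\ga}_{\si,\de}$. Indeed $\sum_{|\al|\le2}|(1+y)^{\si+\al_3}D^\al\om|^2\le \de^{-2}$ pointwise, so $|D^\al\om|\le \de^{-1}(1+y)^{-\si-\al_3}$. This is the claim with $c_\al=1$.

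\emph{Case $|\al|=s-2$.} I would use a weighted Sobolev embedding in $(x,y)$, with $t$ fixed. The two extra derivatives available ($|\al|+2\le s$) should give
\[
\|(1+y)^{\ga+\al_3}D^\al\om\|_{L^\ty(\Om)}\le C\sum_{j+k\le 2}\|(1+y)^{\ga+\al_3+k}\px^j\py^kD^\al\om\|_{L^2}\le C\|\om\|_{\mH^{s,\ga}}.
\]
To prove this, I would first apply the one-dimensional inequality $|h(x)|^2\le C(\|h\|_{L^2(\T)}^2+\|\px h\|_{L^2(\T)}^2)$ in $x$. In $y$, for $f=(1+y)^{\ga+\al_3}D^\al\om$, I would write $|f(y)|^2\le 2\int_y^\ty|f||\py f|$. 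Differentiating the weight only produces a factor $(1+y)^{-1}$, which is harmless, and the $\py$ derivative carries the extra weight $(1+y)$ required by the $\mH^{s,\ga}$ norm. Combining the $x$ and $y$ steps gives the $H^2$-type bound above. One also needs $f\to0$ at infinity, which follows by density. This yields $b_\al=\ga+\al_3$ and $c_\al=\|\om\|_{\mH^{s,\ga}}$.

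\emph{Case $3\le|\al|\le s-3$.} This is interpolation between the two endpoints. The plan is to set $\theta=\frac{|\al|-2}{s-4}\in(0,1)$, so that $|\al|=(1-\theta)\cdot2+\theta(s-2)$. The exponent claimed in the statement is exactly
\[
b_\al=(1-\theta)\si+\theta\ga+\al_3 .
\]
I would apply a Gagliardo--Nirenberg-type interpolation for pointwise weighted bounds, on dyadic regions $y\sim 2^k$ (together with a unit-size region near $y=0$). On the region where $1+y\sim R$, rescale $y=R\eta$. In these scaled variables the order-$2$ pointwise data are of size $R^{-\si}$ per scaled derivative, and the order-$(s-2)$ pointwise data are of size $R^{-\ga}\|\om\|$ per scaled derivative. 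The $y$-derivative weights $(1+y)^{\al_3}$ are exactly the scaling factors. Interpolating the derivative order between $2$ and $s-2$ (for instance via iterated Landau--Kolmogorov inequalities $\|f^{(k)}\|_\ty\lesssim\|f\|_\ty^{1-k/m}\|f^{(m)}\|_\ty^{k/m}+\|f\|_\ty$, applied in $x$ on $\T$ and in the scaled $y$ on an interval of length $\sim1$) then gives
\[
|D^\al\om|\lesssim R^{-\al_3}\,(R^{-\si})^{1-\theta}(R^{-\ga}\|\om\|)^{\theta}+\text{lower order}.
\]
The mixed time derivatives in $D^\al$ need no rescaling: they are simply carried along as part of the multi-index. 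Finally, I would use $x^{1-\theta}y^\theta\le x+y$ to bound $\|\om\|^\theta$ by $1+\|\om\|_{\mH^{s,\ga}}$, which gives $c_\al=1+\|\om\|_{\mH^{s,\ga}}$.

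\textbf{Main obstacle.} The delicate step is the interpolation in mixed variables $(t,x,y)$. The intermediate multi-index $\al$ must be written as a combination of endpoint multi-indices of orders $2$ and $s-2$, direction by direction. Alternatively, one can reduce everything to one variable at a time: interpolate in $y$ with fixed $x,t$, and in $x$ with fixed $y,t$. The lower-order remainder terms must also be absorbed, which uses the fact that $\si>\ga$ makes them decay at least as fast as $R^{-b_\al}$. I would follow the argument in \cite[Remark C.4]{NM}, which treats exactly this situation without time derivatives; the time derivatives are handled identically, since no weights are attached to them.
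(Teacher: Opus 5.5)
Your three-case structure is the route the paper intends. Its omitted proof points to the Masmoudi--Wong interpolation lemma and decay remark, with $2\sqrt{C_0C_2}\le C_0+C_2$ playing the role of your Young-inequality step. Your cases $|\al|\le2$ and $|\al|=s-2$ are fine. So is the middle range for multi-indices with $\al_1\le 2$: there you take $f=\pt^{\al_1}\om$ and interpolate in $(x,y)$ between $(x,y)$-order $2-\al_1$ (pointwise data from the definition of $\mH^{s,\ga}_{\si,\de}$) and $(x,y)$-order $s-2-\al_1$ (your weighted Sobolev bound). This yields exactly $\theta=\frac{|\al|-2}{s-4}$.

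The gap is the claim that time derivatives are handled identically because no weights are attached to them. The obstruction is not the weight. The condition $\om\in\mH^{s,\ga}_{\si,\de}$ is imposed at one fixed time, so there is no $t$-interval on which to apply a Landau--Kolmogorov inequality. Moreover, the pointwise order-$2$ data contain at most two time derivatives. So for $\al_1\ge3$ (e.g.\ $\al=(3,0,0)$, which lies in the middle range once $s\ge6$), neither $t$-interpolation nor $(x,y)$-interpolation has a low-order endpoint.

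Fixed-time information alone cannot fill this in general. At a fixed time $t_0$, $\pt^3\om$ can be any $h$ with $\|(1+y)^{\ga+k}\px^j\py^kh\|_{L^2}<\infty$ for $j+k\le s-3$; take $\om=c(1+y)^{-\si}+\frac{(t-t_0)^3}{6}h$. Choose $h=L^{-\ga-\frac12}\phi(y/L)$ with $\phi\in C_c^\infty((1,2))$. These norms are then bounded uniformly in $L$, while $|h|\sim(1+y)^{-\ga-\frac12}$ at $y\sim L$. This contradicts the claimed rate $b_{(3,0,0)}=\ga+\frac{s-5}{s-4}(\si-\ga)$ whenever $\frac{s-5}{s-4}(\si-\ga)>\frac12$, e.g.\ for $s\ge9$ and $\si=\ga+\frac23$.

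To close the argument you must use information on a time interval. For instance, use $\om\in L^\infty(0,T;\mH^{s,\ga}_{\si,\de})$ and apply Landau--Kolmogorov in $t$ on $[0,T]$. The extra lower-order term still carries the decay $(1+y)^{-\si}$. However, $C$ then depends on $T$, and $\|\om\|_{\mH^{s,\ga}}$ in $c_\al$ must be replaced by its supremum over $[0,T]$.

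The Masmoudi--Wong result the paper cites concerns a space with spatial derivatives only, so the paper's sketch passes over the same point. Your proposal inherits this gap rather than introducing a new one.
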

The derivation of the coefficient $c_\alpha$ utilizes \cite[Lemma C.3]{NM} and the fact $2\sqrt{C_0C_2}\leq C_0+C_2$. And we omit the proof for brevity.
\begin{Remark}\label{shuaijian4}
To reduce the number of cases, we can also directly write it in the following form.
\begin{equation}\label{shuaijian5}
|D^{\al}\om|\leq C(1+\|\omega\|_{\mH^{s, \gamma}})(1+y)^{-b_{\al}}\quad\text{in}~\T\times\R_+
\end{equation}
where
\begin{equation}\label{shuaijian6}
b_\alpha := 
\begin{cases} 
\sigma + \alpha_3 & \text{if } |\alpha| \leq 1, \\ 
\frac{(s-2 - |\alpha|)\sigma + (|\alpha| - 2)\gamma}{s-4} + \alpha_3 & \text{if } 2 \leq |\alpha| \leq s-2.
\end{cases}
\end{equation}
\end{Remark}

With Lemma \ref{faa} and Proposition \ref{shuaijian}, we are able to overcome the second difficulty.

Next, for the estimation of the $L^{\ty}$ norm of $D^{\al}\om,~|\al|\leq2$, we need the following two extremum principles. Their proofs are almost identical to those of \cite[Lemma E.1]{NM} and \cite[Lemma E.2]{NM}, so we omit them here.
\begin{Lemma}[Maximum Principle for Parabolic Equations]
\label{jida}
Let $\epsilon \geq 0$. If $H \in C([0, T]; C^2(\mathbb{T} \times \mathbb{R}_+)) \cap C^1([0, T]; C^0(\mathbb{T} \times \mathbb{R}_+))$ is a bounded function that satisfies the following
\[
\left\{ \partial_t + b_1 \partial_x + b_2 \partial_y - \epsilon^2 \partial_x^2 - b_3\partial_y^2 \right\} H \leq fH \quad \text{in } [0, T] \times \mathbb{T} \times \mathbb{R}_+,
\]
where the coefficients $b_1, b_2$, $b_3$ and $f$ are continuous and satisfy
\begin{equation}\label{eq:E.1}
b_3>0,\quad
\left\| \frac{b_2}{1 + y} \right\|_{L^\infty([0, T] \times \mathbb{T} \times \mathbb{R}_+)} < +\infty \quad \text{and} \quad \|f\|_{L^\infty([0, T] \times \mathbb{T} \times \mathbb{R}_+)} \leq \lambda,
\end{equation}
then for any $t \in [0, T]$,
\begin{equation}\label{eq:E.2}
\sup_{\mathbb{T} \times \mathbb{R}_+} H(t) \leq
\max \left\{ e^{\lambda t} \|H(0)\|_{L^\infty(\mathbb{T} \times \mathbb{R}_+)}, 
\max_{\tau \in [0, t]} \left\{ e^{\lambda(t-\tau)} \|H(\tau)|_{y=0}\|_{L^\infty(\mathbb{T})} \right\} \right\}.
\end{equation}
\end{Lemma}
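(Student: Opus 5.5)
We argue by a weighted perturbation of $H$ combined with a first-touching-point contradiction, in the spirit of \cite[Lemma E.1]{NM}. First we reduce to $\lambda=0$ and $f\le 0$. Set $\tilde H:=e^{-\lambda t}H$. Then
\[
\left\{ \partial_t + b_1 \partial_x + b_2 \partial_y - \epsilon^2 \partial_x^2 - b_3\partial_y^2 \right\}\tilde H\le (f-\lambda)\tilde H .
\]
Wherever $\tilde H>0$ the right-hand side is $\le 0$, since $f-\lambda\le 0$. It therefore suffices to show that, for any $K$ with
\[
K>\max\Big\{\|\tilde H(0)\|_{L^\infty},\ \max_{\tau\in[0,t]}\|\tilde H(\tau)|_{y=0}\|_{L^\infty}\Big\},\qquad K\geq 0,
\]
we have $\tilde H\le K$ on $[0,t]$. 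Multiplying back by $e^{\lambda t}$ and using $e^{\lambda t}\ge e^{\lambda(t-\tau)}$ (for $\lambda\ge0$) then yields \eqref{eq:E.2}. This step needs some care, since we are bounding $\tilde H$ by $\max$ of $e^{\lambda(t-\tau)}$-weighted terms; the cleanest route is to treat the target function $\Phi(t)$ directly as a barrier instead of a constant.

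The domain is unbounded in $y$, so a maximum might not be attained. To handle this we add a growing barrier. For $\eta>0$ and $\mu>0$ to be fixed, set
\[
H_\eta:=\tilde H-K-\eta e^{\mu t}(1+y)^2 .
\]
We apply the operator to the barrier $\psi=e^{\mu t}(1+y)^2$ and get
\[
\mu\psi+2b_2 e^{\mu t}(1+y)-2b_3e^{\mu t}.
\]
The term $-2b_3e^{\mu t}$ has the wrong sign. This is the main obstacle: $b_3$ has no upper bound assumed. I would replace the quadratic weight by a linear one, $\psi=e^{\mu t}(1+y)$, which has no second derivative. Then
\[
(\partial_t+b_2\partial_y-b_3\partial_y^2)\psi=e^{\mu t}\big(\mu(1+y)+b_2\big)\geq e^{\mu t}(1+y)\big(\mu-\|b_2/(1+y)\|_\infty\big)>0
\]
once $\mu$ is large. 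This is exactly where hypothesis \eqref{eq:E.1} is used. Boundedness of $H$ then makes $H_\eta\to-\infty$ as $y\to\infty$, so a positive value of $H_\eta$ would force a maximum to be attained in a compact region.

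Next comes the contradiction. Suppose $H_\eta>0$ somewhere on $[0,t]$. Let $t_0$ be the first time at which $\sup H_\eta(t_0)=0$, attained at a point $(x_0,y_0)$. By the choice of $K$, $t_0>0$ and $y_0>0$. At this point we have
\[
\partial_t H_\eta\ge0,\quad \partial_x H_\eta=\partial_y H_\eta=0,\quad \partial_x^2H_\eta\le0,\quad \partial_y^2H_\eta\le0 .
\]
Hence the operator applied to $H_\eta$ is $\ge0$. On the other hand, it is at most
\[
(f-\lambda)\tilde H-0-\eta L\psi<0,
\]
because $\tilde H=K+\eta\psi>0$ there, which gives $(f-\lambda)\tilde H\le0$, and because $L\psi>0$. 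This is a contradiction. Letting $\eta\to0$ and then letting $K$ decrease to the stated maximum finishes the argument. The only genuinely delicate point is the barrier choice that avoids the unbounded $b_3$; the remaining steps copy \cite[Lemma E.1]{NM}.
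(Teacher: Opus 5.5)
Your argument is correct and is the standard barrier-plus-first-touching-time proof that the paper defers to \cite[Lemma E.1]{NM}. Two of your choices are exactly what this version of the lemma requires. The linear weight $\eta e^{\mu t}(1+y)$, which has $\partial_y^2\psi=0$, copes with the unbounded coefficient $b_3$ allowed here, and $K\geq 0$ guarantees $(f-\lambda)\tilde H\le 0$ at the touching point.

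The only muddled spot is the return from $\tilde H$ to $H$. It is an exact identity, $e^{\lambda t}\max_{\tau\in[0,t]}\|\tilde H(\tau)|_{y=0}\|_{L^\infty(\mathbb{T})}=\max_{\tau\in[0,t]}e^{\lambda(t-\tau)}\|H(\tau)|_{y=0}\|_{L^\infty(\mathbb{T})}$. So you need neither a time-dependent barrier $\Phi$ nor the inequality $e^{\lambda t}\ge e^{\lambda(t-\tau)}$, which points the wrong way and would only give the weaker bound $e^{\lambda t}\|H(\tau)|_{y=0}\|_{L^\infty(\mathbb{T})}$.
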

\begin{Lemma}[Minimum Principle for Parabolic Equations]
\label{jixiao}
Let $\epsilon \geq 0$. If $H \in C([0, T]; C^2(\mathbb{T} \times \mathbb{R}_+)) \cap C^1([0, T]; C^0(\mathbb{T} \times \mathbb{R}_+))$ is a bounded function with
\[
\kappa(t) := \min \left\{ \min_{\mathbb{T} \times \mathbb{R}_+} H(0), \min_{[0,t] \times \mathbb{T}} H\big|_{y=0} \right\} \geq 0
\]
and satisfies
\[
\left\{ \partial_t + b_1 \partial_x + b_2 \partial_y - \epsilon^2 \partial_x^2 - b_3\partial_y^2 \right\} H = fH
\]
where the coefficients $b_1, b_2$, $b_3$ and $f$ are continuous and satisfy \eqref{eq:E.1}, then for any $t \in [0, T]$,
\begin{equation}\label{eq:E.3}
\min_{\mathbb{T} \times \mathbb{R}_+} H(t) \geq (1 - \lambda t e^{\lambda t}) \kappa(t).
\end{equation}
\end{Lemma}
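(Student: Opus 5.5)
The statement to prove is Lemma \ref{jixiao}, the minimum principle. I would reduce it to the maximum principle, Lemma \ref{jida}, applied to a suitably shifted function. Since $\kappa(t)$ is nonincreasing in $t$, it is enough to fix $t_0\in[0,T]$ and prove the bound at time $t_0$ with the constant $\kappa:=\kappa(t_0)\ge 0$. If $\kappa=0$ the claim says that $H(t_0)$ has nonnegative minimum, up to the factor. More precisely, the right-hand side of \eqref{eq:E.3} is then $0$, so we must show $H(t_0)\ge0$. This case is handled by the same argument below with $\kappa$ replaced by $0$, or by a limiting argument.

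\textbf{Main step.} Set $K:=\kappa-H$, which is bounded and has the same regularity as $H$. Since constants are annihilated by the differential operator $L:=\partial_t+b_1\partial_x+b_2\partial_y-\epsilon^2\partial_x^2-b_3\partial_y^2$, we get
\[
LK=-fH=fK-f\kappa\le fK+\lambda\kappa .
\]
The extra source $\lambda\kappa$ keeps us from applying Lemma \ref{jida} directly. So I would instead take $\tilde K:=K-\lambda\kappa t e^{\lambda t}$. We have
\[
L(\lambda\kappa te^{\lambda t})=\lambda\kappa e^{\lambda t}+\lambda^2\kappa te^{\lambda t}.
\]
For the right-hand side, $f\tilde K=fK-f\lambda\kappa te^{\lambda t}$, and $|f|\le\lambda$, so $f\tilde K\ge fK-\lambda^2\kappa te^{\lambda t}$. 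Combining these,
\[
L\tilde K\le fK+\lambda\kappa-\lambda\kappa e^{\lambda t}-\lambda^2\kappa te^{\lambda t}\le f\tilde K+\lambda\kappa(1-e^{\lambda t})\le f\tilde K .
\]
Hence $\tilde K$ satisfies the hypotheses of Lemma \ref{jida}.

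\textbf{Initial and boundary data.} At $t=0$ we have $\tilde K=\kappa-H(0)\le0$. On $y=0$, for $\tau\le t_0$, we have $\tilde K\le\kappa-H|_{y=0}\le0$. Both follow from the definition of $\kappa(t_0)$.

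\textbf{Applying Lemma \ref{jida}.} The lemma as stated bounds $\sup\tilde K$ by a maximum of quantities involving $\|\tilde K(0)\|_{L^\infty}$ and $\|\tilde K|_{y=0}\|_{L^\infty}$, which are absolute values. What we actually want is that a nonpositive $\tilde K$ stays nonpositive. This is the one delicate point, and I expect it to be the main obstacle. I would handle it by recalling the mechanism of the proof of Lemma \ref{jida}. One multiplies by $e^{-(\lambda+1)t}$ and adds a small multiple $\eta(1+y)^{1/2}$-type barrier, which is permitted by the growth assumption on $b_2/(1+y)$ together with $b_3>0$. An interior positive maximum is then excluded because $L\le f\cdot$ at such a point gives a contradiction. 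Any positive supremum must therefore be attained at $t=0$ or at $y=0$. Consequently, if the positive parts of the data vanish, the conclusion is $\sup\tilde K\le0$. In other words, I would apply Lemma \ref{jida} in its positive-part form, with $\max\{\sup H(0),0\}$ in place of $\|H(0)\|_{L^\infty}$, which its proof gives verbatim.

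\textbf{Conclusion.} From $\tilde K\le0$ at $t_0$ we get
\[
H(t_0)\ge\kappa-\lambda\kappa t_0e^{\lambda t_0}=(1-\lambda t_0e^{\lambda t_0})\kappa(t_0),
\]
which is \eqref{eq:E.3}.
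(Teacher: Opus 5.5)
Your proposal is correct and takes essentially the intended route: the paper gives no proof of its own and defers to Masmoudi--Wong's Lemma E.2, and your reduction to the maximum principle via $\tilde K=\kappa(t_0)-H-\lambda\kappa(t_0)te^{\lambda t}$ (with $L\tilde K\le f\tilde K$ and nonpositive initial and boundary data) is the standard argument behind that reference. You are right that Lemma \ref{jida} has to be used in its positive-part form. The concave barrier $\eta e^{\nu t}(1+y)^{1/2}$ does give that form, since concavity makes the possibly unbounded $b_3$ term harmless, and the case $\kappa=0$ needs no separate treatment.
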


Now, we present some commonly used inequalities, which can be found in \cite{CJLCPAM}, see also \cite{NM,CJX}.
\begin{Lemma}\label{jingchangyong}
For proper functions $f_1,f_2$, there holds:
\begin{description}
  \item[(i)] If $\lim_{y\rightarrow+\ty}(f_1f_2)(x,y)=0$, then we have
\begin{equation}\label{L1}
\left|\int_{\T}(f_1f_2)|_{y=0}dx\right|
\leq \|\py f_1\|_{L^2(\Om)}\|f_2\|_{L^2(\Om)}
+\|f_1\|_{L^2(\Om)}\|\py f_2\|_{L^2(\Om)}.
\end{equation}
In particular, if $\lim_{y\rightarrow+\ty}f_1(x,y)=0$, then
\begin{equation}\label{L1.5}
\left\|f_1|_{y=0}\right\|_{L^2(\T)}
\leq \sqrt{2}\|f_1\|^{\frac{1}{2}}_{L^2(\Om)}\|\py f_1\|^{\frac{1}{2}}_{L^2(\Om)}.
\end{equation}
  \item[(ii)] Let $\ga\in\R$ and an integer $s\geq3$, then for any $\al=(\al_1,\al_2,\al_3)\in \N^3$ and $\tilde{\al}=(\tilde{\al}_1,\tilde{\al}_2,\tilde{\al}_3)\in \N^3$, which satisfy $|\al|+|\tilde{\al}|\leq s$, there holds
\begin{equation}\label{L2}
\|(D^{\al}f_1\cdot D^{\tilde{\al}}f_2)(t,\cdot)\|_{L^2_{\gamma+\al_3+\tilde{\al}_3}(\Om)}
\leq C\|f_1(t)\|_{\mH^{s, \gamma_1}}\|f_2(t)\|_{\mH^{s, \gamma_2}},
\end{equation}
where $\ga_1,\ga_2\in\R$ and $\ga_1+\ga_2=\ga$.
  \item[(iii)] For any $\la>\frac{1}{2}$, $\tilde{\la}>0$, we have
\begin{equation}\label{L3}
\begin{aligned}
& \|(1+y)^{-\lambda}(\partial_y^{-1} f_1)(y)\|_{L_y^2(\mathbb{R}_{+})} \leq \frac{2}{2 \lambda-1}\|(1+y)^{1-\lambda} f_1(y)\|_{L_y^2(\mathbb{R}_{+})}, \\
& \|(1+y)^{-\tilde{\lambda}}(\partial_y^{-1} f_1)(y)\|_{L_y^{\infty}(\mathbb{R}_{+})} \leq \frac{1}{\tilde{\lambda}}\|(1+y)^{1-\tilde{\lambda}} f_1(y)\|_{L_y^{\infty}(\mathbb{R}_{+})}.
\end{aligned}
\end{equation}
And let $\gamma\in\R$ and an integer $s\geq3$, then for any $\al=(\al_1,\al_2,\al_3)\in \N^3$ and $\tilde{\beta}=(\tilde{\beta}_1,\tilde{\beta}_2,0)\in \N^3$, which satisfy $|\al|+|\tilde{\beta}|\leq s$, there holds
\begin{equation}\label{L4}
\|(D^{\al}f_1\cdot \ptau^{\tilde{\beta}}
\py^{-1}f_2)(t,\cdot)\|_{L^2_{\gamma+\al_3}(\Om)}\leq
C \|f_1(t)\|_{\mH^{s,\gamma+\la}}\|f_2(t)\|_{\mH^{s,1-\la}}.
\end{equation}
In particular, for $\la=1$
\begin{equation}\label{L4.5}
\begin{aligned}
 \|(1+y)^{-1}(\partial_y^{-1} f_1)(y)\|_{L_y^2(\mathbb{R}_{+})}
&\leq 2\|f_1(y)\|_{L_y^2(\mathbb{R}_{+})}, \\
\|(D^{\al}f_1\cdot \ptau^{\tilde{\beta}}
\py^{-1}f_2)(t,\cdot)\|_{L^2_{\gamma+\al_3}(\Om)}
&\leq
C \|f_1(t)\|_{\mH^{s,1+\gamma}}\|f_2(t)\|_{\mH^{s,0}}.
\end{aligned}
\end{equation}
\item[(iv)]For any $\la>\frac{1}{2}$, we have
\begin{equation}\label{L5}
\|(\py^{-1}f_1)(y)\|_{L_y^{\ty}(\R_+)}
\leq C\|f_1\|_{L^2_{y,\la}(\R_+)}.
\end{equation}
And let $\gamma\in\R$ and an integer $s\geq2$, then for any $\al=(\al_1,\al_2,\al_3)\in \N^3$ and $\tilde{\beta}=(\tilde{\beta}_1,\tilde{\beta}_2,0)\in \N^3$, which satisfy $|\al|+|\tilde{\beta}|\leq s$, there holds
\begin{equation}\label{L6}
\|(D^{\al}f_1\cdot \ptau^{\tilde{\beta}}
\py^{-1}f_2)(t,\cdot)\|_{L^2_{\gamma+\al_3}(\Om)}\leq
C \|f_1(t)\|_{\mH^{s,\gamma}}\|f_2(t)\|_{\mH^{s,\la}}.
\end{equation}
\end{description}
\end{Lemma}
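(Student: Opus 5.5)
The statement just given is Lemma \ref{jingchangyong}, a collection of standard weighted inequalities. I will prove each item separately using the fundamental theorem of calculus in $y$, Hardy-type inequalities, and weighted Sobolev embeddings.

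\textbf{Item (i).} Since $f_1f_2\to0$ as $y\to+\infty$, write
\[(f_1f_2)|_{y=0}=-\int_0^{\infty}\py(f_1f_2)\,dy .\]
Integrating over $\T$ and applying Cauchy--Schwarz to $\py f_1\, f_2+f_1\,\py f_2$ gives \eqref{L1}. For \eqref{L1.5}, take $f_2=f_1$. Then $\|f_1|_{y=0}\|_{L^2(\T)}^2\le 2\|f_1\|_{L^2}\|\py f_1\|_{L^2}$, and taking square roots gives the claim.

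\textbf{Item (iii), scalar bounds.} The first inequality in \eqref{L3} is the classical Hardy inequality. For $F=\py^{-1}f_1$ we have $F(0)=0$. Integrate by parts in
\[\int (1+y)^{-2\la}F^2\,dy=\frac{1}{2\la-1}\Big(-\big[(1+y)^{1-2\la}F^2\big]_0^\infty+2\int(1+y)^{1-2\la}Ff_1\,dy\Big).\]
The boundary term is nonpositive, or vanishes after a truncation argument. Applying Cauchy--Schwarz to the last integral, with weights split as $(1+y)^{-\la}F\cdot(1+y)^{1-\la}f_1$, and dividing through yields the constant $\frac{2}{2\la-1}$.

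The $L^\infty$ bound is direct:
\[|F(y)|\le \|(1+y)^{1-\tilde\la}f_1\|_{L^\infty}\int_0^y(1+\tilde y)^{\tilde\la-1}\,d\tilde y\le \frac{(1+y)^{\tilde\la}}{\tilde\la}\,\|(1+y)^{1-\tilde\la}f_1\|_{L^\infty}.\]

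\textbf{Item (iv), scalar bound.} Inequality \eqref{L5} follows from Cauchy--Schwarz:
\[|\py^{-1}f_1|\le \|(1+y)^{-\la}\|_{L^2}\,\|f_1\|_{L^2_\la},\]
which is finite because $\la>\frac12$.

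\textbf{Product estimates (ii), \eqref{L4}, \eqref{L6}.} These are the main work, and I expect the bookkeeping of weights to be the only real obstacle. The strategy is the standard one from \cite{CJLCPAM,NM}. Let $k=|\al|$ and $l=|\tilde\al|$ with $k+l\le s$. Without loss of generality $k\le l$, so $k\le s/2$.

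Place the lower-order factor in $L^\infty$ and the higher-order one in $L^2$. The $L^\infty$ control of $(1+y)^{\ga_1+\al_3}D^\al f_1$ comes from the anisotropic Sobolev embedding
\[\|g\|_{L^\infty(\Om)}\lesssim \|g\|_{L^2}+\|\px g\|_{L^2}+\|\py g\|_{L^2}+\|\px\py g\|_{L^2},\]
obtained by combining the 1D Agmon inequality in $y$ with $H^1(\T)\subset L^\infty$. This costs two extra derivatives. When the weight is differentiated in $y$, it produces $(1+y)^{\ga_1+\al_3-1}$, which is harmless, and the extra $\py$ matches the weight $\al_3+1$ in $\mH^{s,\ga_1}$.

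The embedding requires $k+2\le s$. This holds when $s\ge3$ unless $k=l$ is large, but since $k\le s/2$ we get $k+2\le s$ for $s\ge 3$ (when $s=3$, $k\le1$). The $t$-derivatives in $D^\al$ are simply carried along, since the norm is pointwise in $t$.

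For \eqref{L4} and \eqref{L6}, the factor $\ptau^{\tilde\beta}\py^{-1}f_2=\py^{-1}\ptau^{\tilde\beta}f_2$ has no $y$-weight requirement.
\begin{itemize}
\item If $|\tilde\beta|$ is the low-order index, bound it in $L^\infty_y$ by \eqref{L5} (for \eqref{L6}) or by the $L^\infty$ part of \eqref{L3} (for \eqref{L4}, the latter giving growth $(1+y)^{\la}$ which is absorbed by the $\ga+\la$ weight on $f_1$). The $x$-direction is then handled by $H^1(\T)$ in $x$. The other factor $D^\al f_1$ goes in $L^2$.
\item If $|\tilde\beta|$ is the high-order index, put $D^\al f_1$ in $L^\infty$ with weight $(1+y)^{\ga+\la+\al_3}$, and estimate $(1+y)^{-\la}\py^{-1}\ptau^{\tilde\beta}f_2$ in $L^2_y$ by the Hardy inequality in \eqref{L3} (or by $L^\infty_y$ via \eqref{L5} in case (iv)). The $x$-integrability is taken from the appropriate factor.
\end{itemize}
Setting $\la=1$ gives \eqref{L4.5}.
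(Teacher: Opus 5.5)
Your proof is correct. The paper gives no argument of its own for this lemma; it simply refers to \cite{CJLCPAM} (see also \cite{NM,CJX}). Your route is essentially the standard proof found there: the fundamental theorem of calculus in $y$ for (i), the integration-by-parts Hardy inequality and elementary pointwise/Cauchy--Schwarz bounds for \eqref{L3} and \eqref{L5}, and, for \eqref{L2}, \eqref{L4}, \eqref{L6}, the lower-order factor placed in $L^\infty$ and the higher-order one in $L^2$. The $L^\infty$ bound comes from a Sobolev embedding costing at most two derivatives, which is affordable since that factor has order at most $[\frac{s}{2}]\le s-2$. The weight $(1+y)^{-\la}$ on $\py^{-1}\ptau^{\tilde\beta}f_2$ is handled by \eqref{L3}.

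The one step you leave implicit is the high-order case of \eqref{L6}, where no extra weight on $f_1$ is available for Hardy. There one takes $(1+y)^{\ga+\al_3}D^{\al}f_1$ in $L^\infty_xL^2_y$ (one $x$-derivative) and $\py^{-1}\ptau^{\tilde\beta}f_2$ in $L^2_xL^\infty_y$ via \eqref{L5}, exactly as your remark about $x$-integrability suggests.
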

Moreover, we have the following Sobolev-type inequality, which is \cite[Lemma B.2]{NM}
\begin{Lemma}\label{chazhi}
Let $\phi~:~\T\times\R_+\rightarrow \R$. Then there exists a universal constant $\tilde{C}>0$ such that
\begin{equation}\label{chazhi1}
\|\phi\|_{L^{\ty}}\leq \tilde{C} (\|\phi\|_{L^2}
+\|\px\phi\|_{L^2}+\|\py^2\phi\|_{L^2}).
\end{equation}
\end{Lemma}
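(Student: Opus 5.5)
The plan is to reduce \eqref{chazhi1} to a family of one-dimensional estimates in $y$, using a Fourier expansion in $x\in\T$. The difficulty is that the right-hand side of \eqref{chazhi1} controls neither $\py\phi$ nor any mixed derivative, so the standard embedding $H^2(\Om)\hookrightarrow L^\infty$ does not apply directly. We will therefore trade $x$-regularity against $y$-regularity mode by mode.

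\textbf{Step 1 (a scaled one-dimensional inequality).} Write $\phi(x,y)=\sum_{k\in\mathbb{Z}}\hat\phi_k(y)e^{ikx}$, so that
\[
\|\phi\|_{L^\infty}\le\sum_k\sup_{y}|\hat\phi_k(y)|.
\]
For a function $g$ on $\R_+$ we prove that for every $L\in(0,1]$,
\[
\sup_{\R_+}|g|\le C\big(L^{-1/2}\|g\|_{L^2(\R_+)}+L^{3/2}\|g''\|_{L^2(\R_+)}\big).
\]
Fix $y_0\ge0$ and work on $I=[y_0,y_0+L]\subset\R_+$. Since $\R_+$ is unbounded to the right, such an interval always fits, so no boundary issue arises at $y=0$. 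Let $\ell$ be the linear function with the same mean value and the same first moment as $g$ on $I$. Then $g-\ell$ vanishes at some point of $I$, and so does its derivative. Applying Taylor's formula twice with Cauchy--Schwarz gives
\[
\sup_I|g-\ell|\le C L^{3/2}\|g''\|_{L^2(I)}.
\]
Since $\ell$ is linear, $\sup_I|\ell|\le C L^{-1/2}\|\ell\|_{L^2(I)}$. Finally,
\[
\|\ell\|_{L^2(I)}\le\|g\|_{L^2(I)}+\|g-\ell\|_{L^2(I)}\le\|g\|_{L^2(I)}+CL^2\|g''\|_{L^2(I)}.
\]
Combining these three bounds yields the claimed inequality.

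\textbf{Step 2 (choice of scale for each mode).} Apply Step 1 to $g=\hat\phi_k$ with $L=L_k=(1+|k|)^{-1/2}$ and sum over $k$:
\[
\sum_k\sup|\hat\phi_k|\le C\sum_k(1+|k|)^{-3/4}\Big[(1+|k|)\|\hat\phi_k\|_{L^2_y}\Big]+C\sum_k(1+|k|)^{-3/4}\|\hat\phi_k''\|_{L^2_y}.
\]
The exponent is chosen so that both weight sequences are square-summable. In the first sum the weight is $(1+|k|)^{1/4-1}=(1+|k|)^{-3/4}$; in the second it is $L_k^{3/2}=(1+|k|)^{-3/4}$. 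In both cases $\sum_k(1+|k|)^{-3/2}<\infty$. Any exponent $a\in(\frac13,1)$ in $L_k=(1+|k|)^{-a}$ would also work.

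\textbf{Step 3 (Cauchy--Schwarz and Parseval).} Applying Cauchy--Schwarz in $k$ bounds the right-hand side by
\[
C\Big(\sum_k(1+|k|)^2\|\hat\phi_k\|_{L^2_y}^2\Big)^{1/2}+C\Big(\sum_k\|\hat\phi_k''\|_{L^2_y}^2\Big)^{1/2}.
\]
By Parseval this is at most $\tilde C\big(\|\phi\|_{L^2}+\|\px\phi\|_{L^2}+\|\py^2\phi\|_{L^2}\big)$, which is \eqref{chazhi1}.

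The main obstacle is Step 1, specifically handling the missing $\py\phi$ control. Working on intervals of length $L$ placed to the right of $y_0$ avoids the boundary term at $y=0$ that a naive integration by parts would produce, and the remaining interpolation is routine. If $\phi$ is not smooth, a density argument finishes the proof.
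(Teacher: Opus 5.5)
Your proof is correct. The paper gives no argument for this lemma; it quotes it from Masmoudi--Wong \cite[Lemma B.2]{NM}. The natural comparison is therefore with the standard proof of such anisotropic embeddings.

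That standard proof has three steps. First, extend $\phi$ to $\T\times\R$ by a higher-order reflection in $y$; an even reflection would create a jump in $\py\phi$ and hence a delta in $\py^2\phi$. Second, bound $\|\phi\|_{L^\infty}$ by the $\ell^1_kL^1_\xi$ norm of the Fourier transform. Third, apply Cauchy--Schwarz with the anisotropic weight $1+|k|+\xi^2$, using $\int_\R(1+|k|+\xi^2)^{-2}\,d\xi\approx(1+|k|)^{-3/2}$, which is summable in $k$.

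You use Fourier analysis only in $x$. You prove the $y$-estimate by local linear approximation on intervals $[y_0,y_0+L]$ to the right of each point, so no extension operator is needed and the boundary $y=0$ never enters. Your choice $L_k=(1+|k|)^{-1/2}$ is exactly the anisotropic scaling $\xi^2\sim|k|$, and your series $\sum_k L_k^3$ is the same convergent series as in the Fourier proof.

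Your route is more elementary and self-contained. Since Step 1 holds for every $L>0$ (the restriction $L\le 1$ is unnecessary), optimizing in $L$ also gives the scale-invariant half-line inequality $\sup_{\R_+}|g|\le C\|g\|_{L^2}^{3/4}\|g''\|_{L^2}^{1/4}$ as a byproduct. The Fourier route is shorter if one takes the extension lemma for granted.

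A few points to tighten:
\begin{itemize}
\item The coefficients $\hat\phi_k$ are complex-valued, so the intermediate-value and Rolle arguments in Step 1 must be applied to $\mathrm{Re}\,\hat\phi_k$ and $\mathrm{Im}\,\hat\phi_k$ separately, or you can use the real trigonometric basis. This costs only a factor $2$.
\item The claim that $(g-\ell)'$ vanishes somewhere in $I$ needs its one-line reason. The function $h=g-\ell$ is orthogonal to $1$ and $y$ on $I$. If $h\not\equiv 0$, it must change sign at least twice, since a single sign change at $z$ would force $\int_I h(y)(y-z)\,dy\neq 0$. So $h$ has two zeros, and Rolle's theorem gives a zero of $h'$.
\item No density argument is needed. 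If the right-hand side is finite, each $\hat\phi_k$ lies in $H^2(0,R)$ for every $R$, hence in $C^1([0,\infty))$, and Step 1 applies directly. The series $\sum_k\hat\phi_k(y)e^{ikx}$ then converges uniformly and equals $\phi$ almost everywhere.
\end{itemize}
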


\section{A Priori Estimates}\label{zhumingti}
In this section, we establish the a priori estimates for system \eqref{wodu}, which are required for the proof of Theorem \ref{zhuding}.
\begin{Proposition}\label{xianyanguji}
Assume $\frac{1}{3}<n<1$. Let $s\geq4,~\ga\geq1,~\ga+\frac{1}{2}<\si
\leq\min\{\frac{1}{1-n},\ga+\frac{2}{3}\}$ and
$\de\in(0,1)$ is small enough.
Assume all the hypotheses for $(U,p)$ and $(u_0,\om_0)$ given in Theorem \ref{zhuding} hold.
If $(u,v,\om)$ is a classical solution of \eqref{wodu} in $[0,T]$ and satisfies
\begin{equation}\label{jiashe}
\om\in L^{\ty}(0,T;\mH^{s,\gamma}_{\si,\de}),~\py\om\in L^{2}(0,T;\mH^{s,\gamma}).
\end{equation}
Then, there exists a positive constant $C$, depending only on $s,~\ga,~\si,~\de$ and $n$, such that for small time,
\begin{equation}\label{zuihou1}
\begin{split}
\|\om\|_{\mH^{s, \gamma}_g}^2
\leq Q(t), 
\end{split}
\end{equation}
where
\begin{equation}\label{qt}
\begin{split}
Q(t):=&\{\mathcal{P}(M,\|u_0-U|_{t=0}\|_{H^{2s,\ga-1}},\|\om_0\|_{H^{2s,\ga}})+
C(1+M)^{2s+2}t \}\\
&\cdot\left\{
1-Cs\{\mathcal{P}(M,\|u_0-U|_{t=0}\|_{H^{2s,\ga-1}},\|\om_0\|_{H^{2s,\ga}})+
C(1+M)^{2s+2}t\}^s t
\right\}^{-\frac{1}{s}}
\end{split}
\end{equation}
Also, we have that
\begin{equation}\label{zuihou2}
\begin{split}
(1+y)^{\si}\om(\tau)\geq (1+y)^{\si}\om_0-\de^{-1}t,
\end{split}
\end{equation}
and
\begin{equation}\label{zuihou3}
\begin{split}
\|I(t)\|_{L^{\ty}}
\leq \max\left\{
\|I(0)\|_{L^{\ty}},~
6\tilde{C}^2 A^2(t)\right\}e^{C (1+M+A(t))t},
\end{split}
\end{equation}
where $I(t):=\sum_{|\alpha| \leq 2}\left|(1+y)^{\sigma+\alpha_3} D^\alpha \omega\right|^2$ and the universal constant $\tilde{C}$ is the same as the in Lemma \ref{chazhi}. Moreover, $A(t)$ is defined by
\begin{equation}\label{At}
\begin{split}
A(t):=\max_{[0,t]}\|\om\|_{\mH^{s, \gamma}_g}.
\end{split}
\end{equation}
In addition, if $s\geq5$, then there also holds
\begin{equation}\label{zuihou4}
\begin{split}
\|I(t)\|_{L^{\ty}}
\leq \left\{
\|I(0)\|_{L^{\ty}}+4\tilde{C}^2 A^2(t)t
\right\}e^{C (1+M+A(t))t}
.
\end{split}
\end{equation}

\end{Proposition}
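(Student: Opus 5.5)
We follow the Masmoudi--Wong scheme, adapted to the degenerate viscosity $n\om^{n-1}$. There are three pieces: weighted energy estimates for $\|\om\|_{\mH^{s,\ga}_g}$, which give \eqref{zuihou1}; a minimum principle for $(1+y)^\si\om$, which gives \eqref{zuihou2}; and a maximum principle for $I(t)$, which gives \eqref{zuihou3}--\eqref{zuihou4}.

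\emph{Step 1: energy estimates.} For $|\al|\le s$ with $\al_1+\al_2\le s-1$ we apply $D^\al$ to \eqref{wodu}$_1$, multiply by $(1+y)^{2(\ga+\al_3)}D^\al\om$ and integrate over $\Om$.

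The diffusion term $-n\om^{n-1}\py^2 D^\al\om$ is integrated by parts. It produces the good dissipation
\[
n\|\om^{\frac{n-1}{2}}(1+y)^{\ga+\al_3}\py D^\al\om\|_{L^2}^2 ,
\]
and $\om^{n-1}\ge \de^{1-n}(1+y)^{\si(n-1)}$ is bounded below because $\om\le C(1+y)^{-\si}$ and $n<1$. It also produces commutator terms $D^\be(\om^{n-1})\,D^{\al-\be}\py^2\om$. We expand $D^\be(\om^{n-1})$ by Lemma~\ref{faa}, writing it as a sum of $\om^{n-1-r}\prod_i D^{\be^i}\om$.

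Each factor $\om^{-1}$ costs at most $\de^{-1}(1+y)^{\si}$. Each $D^{\be^i}\om$ with a low index gains $(1+y)^{-b_{\be^i}}$ by Proposition~\ref{shuaijian}, and at most one factor is estimated in $L^2$ through Lemma~\ref{jingchangyong}(ii). The net weight is therefore $(1+y)^{\si(1-n)+\sum_i(\si-b_{\be^i})}$. The restrictions $\si\le \frac1{1-n}$ and $\si\le\ga+\frac23$ are designed exactly so that this growth is absorbed: the weight $(1+y)^{\si(1-n)}\le 1+y$ is compensated by the extra $y$-weight $\al_3$ attached to $\py$, and the interpolated rates $b_\al$ lose at most $\si-\ga\le\frac23$ per factor.

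Boundary terms at $y=0$ come in two kinds. When $\al_3=0$ they are handled through $n\om^{n-1}\py\om|_{y=0}=\px p$. When $\al_3\ge1$ we use the equation to trade $\py^2$ for $\ptau$ derivatives, which is why time derivatives are included in the space. The resulting traces are controlled by \eqref{L1}, with half of each term absorbed into the dissipation.

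For $\al_1+\al_2=s$ we estimate $g_\al$ instead. Subtracting $a$ times the $\ptau^\al$-derivative of the velocity equation cancels the bad term $\ptau^\al v\,\py\om$; the commutators involving $a=\py\om/\om$ and its derivatives are again handled with Proposition~\ref{shuaijian}. The boundary value of $g_\al$ is expressed in terms of $\ptau^\al p$ and lower-order quantities.

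Summing all of these gives
\[
\frac{d}{dt}\|\om\|^2_{\mH^{s,\ga}_g}\le C(1+M)^{2s+2}+C\|\om\|^{2s+2}_{\mH^{s,\ga}_g},
\]
using \eqref{chabuduo} to pass between norms. This is a Bernoulli-type ODE, and integrating it yields $Q(t)$ in \eqref{qt}, with initial time-derivative norms bounded through the equation by $\mathcal P(M,\|u_0-U\|_{H^{2s,\ga-1}},\|\om_0\|_{H^{2s,\ga}})$.

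\emph{Step 2: lower bound.} We set $H=(1+y)^\si\om$ and compute its equation. The lower-order coefficients produced by the weight are bounded by $C\de^{-1}$, again using $\si(1-n)\le1$. A direct comparison then gives $\pt H\ge -\de^{-1}$ in the sense of Lemma~\ref{jixiao}, hence \eqref{zuihou2}. Alternatively, we apply Lemma~\ref{jixiao} with the boundary data handled by the Neumann relation.

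\emph{Step 3: bound on $I(t)$.} We write the equation satisfied by $I$ as $\{\pt+u\px+v\py-n\om^{n-1}\py^2\}I\le C(1+M+A)I$, with coefficients controlled as in Step~1, and apply Lemma~\ref{jida}. The boundary values $I|_{y=0}$ are bounded via Lemma~\ref{chazhi} by $\tilde C^2$ times the $\mH^{4,\ga}$-norm, which gives the $6\tilde C^2A^2$ term in \eqref{zuihou3}. When $s\ge5$ we instead bound $\pt I$ at the boundary by $\tilde C^2A^2$, using one more derivative, and integrate in time to obtain \eqref{zuihou4}.

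\emph{Main obstacle.} The hardest step is the weight bookkeeping in the Fa\`a di Bruno commutators of Step~1, especially at the top order $|\al|=s$ when the highest derivative lands on $\om^{n-1}$ and must be combined with $a$ in the $g_\al$ estimate. Here we would first try pairing $\om^{-r}$ with $r$ low-order factors, so that each pair has net weight $(1+y)^{\si-b}$ with $\si-b\le\frac23$, and then check that the total never exceeds the single $y$-power available from the dissipation.
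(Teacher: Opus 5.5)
Your Steps 1 and 3 and the ODE closure follow the paper's route. That includes the energy estimates for $D^{\alpha}\omega$ with $\alpha_1+\alpha_2\le s-1$, the $g_\alpha$ cancellation at tangential order $s$, and the top-order boundary term reduced via the equation. It also includes the Bernoulli-type ODE through \eqref{chabuduo}, and Lemma \ref{jida} for $I$ with wall values controlled by Lemma \ref{chazhi}.

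\textbf{The gap is Step 2.} The bound \eqref{zuihou2}, with its specific constant $\delta^{-1}$, does not come from a minimum principle. The pointwise constraint in $\mH^{s,\gamma}_{\sigma,\delta}$ runs over all $|\alpha|\le 2$, including $\alpha=e_1$. So \eqref{jiashe} already gives $|(1+y)^{\sigma}\partial_t\omega|\le\delta^{-1}$, and one simply integrates in time:
\[
(1+y)^{\sigma}\omega(t)\ \ge\ (1+y)^{\sigma}\omega_0-\int_0^t\|(1+y)^{\sigma}\partial_t\omega(\tau)\|_{L^{\infty}}\,d\tau\ \ge\ (1+y)^{\sigma}\omega_0-\delta^{-1}t.
\]
Neither of your routes produces this.
\begin{itemize}
\item Lemma \ref{jixiao} gives a multiplicative bound $(1-\lambda t e^{\lambda t})\kappa(t)$, not an additive one. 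Moreover, $\kappa(t)$ requires a lower bound for $\omega|_{y=0}$ on $[0,t]$. The Neumann relation $n\omega^{n-1}\partial_y\omega|_{y=0}=\partial_x p$ constrains $\partial_y\omega$ at the wall, not $\omega$, so it cannot supply one.
\item Bounding $\partial_t H$ through the equation for $H=(1+y)^{\sigma}\omega$ gives a constant depending on $M$ and $A(t)$, not $\delta^{-1}$. The dependence enters through $u\partial_x H$, $\sigma vH/(1+y)$, the $\omega^{n-2}|\partial_y\omega|^2$ term, and so on.
\end{itemize}
The minimum-principle route is what one needs in the space of Section \ref{lingyizhong}, where time derivatives are absent. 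There it yields the different estimate \eqref{zuihou2222}.

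\textbf{Two smaller points in Step 1.}
\begin{itemize}
\item The dissipation bound is $\omega^{n-1}\ge\delta^{1-n}(1+y)^{\sigma(1-n)}\ge\delta^{1-n}$; your exponent has the wrong sign.
\item More substantively, the weight budget in your last paragraph would not close as stated. The interpolation loss is additive in the derivative count, not per factor. Since $b_\alpha$ is affine in $|\alpha|$, for $|\beta|\le s-2$ one has $\sum_i(\sigma-b_{\beta^i})\le(\sigma-\gamma)-\beta_3$ in total, and the $-\beta_3$ cancels the weight attached to $\partial_y$.
\end{itemize}
For $|\beta|\ge 2$ you must place $\partial_y^2D^{\alpha-\beta}\omega$ in $L^2$ with weight $(1+y)^{\gamma+\alpha_3-\beta_3+2}$. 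This is a component of $\|\omega\|_{\mH^{s,\gamma}}$ because $|\alpha-\beta|+2\le s$. These two $y$-powers give the exponent $\sigma(1-n)+(\sigma-\gamma)-2\le\sigma-\gamma-1\le 0$. With only the single power from the dissipation you would need $\sigma(1-n)+\sigma-\gamma\le 1$, which fails, e.g., at $\sigma=\frac{1}{1-n}$.

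Finally, $\sigma\le\gamma+\frac23$ enters only in the $\omega^{n-2}|\partial_y\omega|^2$ commutators. There one factor $D^{\tilde\beta}\partial_y\omega$ is taken in $L^\infty$ with the midpoint decay $(1+y)^{-\frac{\sigma+\gamma}{2}-1-\tilde\beta_3}$, and the condition becomes $\sigma(1-n)-2+\frac32(\sigma-\gamma)\le 0$.
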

\begin{Remark}\label{bixuyou}
By using \eqref{wodu}, we know that $I(0)$ can be expressed by spatial derivatives of initial data $(\om_0,u_0)$ up to order $4$. Then by \eqref{chushiwodu000} and \eqref{chushiwodu111}, we know that for $\de$ small enough (depending on $\|u_0\|_{W^{4,\ty}}$), there holds
\begin{equation}\label{chushiwodu222}
(1+y)^\sigma \omega_0 \geq 2\de,\quad \|I(0)\|_{L^{\ty}} \leq \frac{1}{4\de^2}.
\end{equation}
\end{Remark}

We will divide the proof of Proposition \ref{xianyanguji} into two parts. First, for $\al$ satisfying $|\al|\leq s$ and $\al_1+\al_2\leq s-1$, the a priori estimates can be obtained by the standard energy method because there is no tangential regularity loss. In the second case that $\al_1+\al_2=s$ and $\al_3=0$, we will use $g_{\al}$ to cancel the tangential regularity loss.

\subsection{Estimates with Normal Derivatives}\label{zuofaxiangguji}
First, we will deal with the weighted estimates for $D^{\al}\om$ with $|\al|\leq s$ and $\al_1+\al_2\leq s-1$. The result reads as the following:
\begin{Proposition}[Weighted estimates for $D^{\al}\om$ with $|\al|\leq s$ and $\al_1+\al_2\leq s-1$]\label{guji1}
Assume $\frac{1}{3}<n<1$. Let $s\geq4,~\ga\geq1,~\ga+\frac{1}{2}<\si
\leq\min\{\frac{1}{1-n},\ga+\frac{2}{3}\}$ and
$\de\in(0,1)$ is small enough. If $(u,v,\om)$ is a classical solution of \eqref{wodu} in $[0,T]$ and satisfies
\begin{equation*}
\om\in L^{\ty}(0,T;\mH^{s,\gamma}_{\si,\de}),~\py\om\in L^{2}(0,T;\mH^{s,\gamma}),
\end{equation*}
then there exists a positive constant C, which depends on $n,~s,~\ga,~\si$ and $\de$ such that for any small $0<\ep<1$,
\begin{equation}\label{qieguji}
\begin{split}
&\sum_{\substack{|\al|\leq s \\ \al_1+\al_2\leq s-1}}
\left(\frac{d}{dt}\|D^{\al}\om\|^2_{L^2_{\ga+\al_3}}
+ n\de^{1-n}\|\py D^{\al}\om\|^2_{L^2_{\ga+\al_3}}
\right)\\
\leq &C\ep\|\py\omega\|_{\mH^{s, \gamma}}^2
+C\ep^{-1}(1+M+\|u-U\|_{\mH^{s, \ga-1}}+\|\omega\|_{\mH^{s, \gamma}})^{2s+2}.
\end{split}
\end{equation}

\end{Proposition}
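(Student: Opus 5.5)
We apply $D^\alpha$ with $|\alpha|\le s$, $\alpha_1+\alpha_2\le s-1$ to the vorticity equation in \eqref{wodu} and test against $(1+y)^{2\gamma+2\alpha_3}D^\alpha\omega$. For the diffusion we use the identity
\[
n\omega^{n-1}\partial_y^2\omega+n(n-1)\omega^{n-2}|\partial_y\omega|^2=\partial_y(n\omega^{n-1}\partial_y\omega)=\partial_y^2(\omega^n).
\]
The differentiated equation then reads
\[
\partial_t D^\alpha\omega+u\partial_xD^\alpha\omega+v\partial_yD^\alpha\omega-\partial_y\bigl(n\omega^{n-1}\partial_yD^\alpha\omega\bigr)=R_\alpha ,
\]
where $R_\alpha$ collects two groups of commutators. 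The transport commutators are $\sum_{0<\beta\le\alpha}\binom{\alpha}{\beta}(D^\beta u\,\partial_xD^{\alpha-\beta}\omega+D^\beta v\,\partial_yD^{\alpha-\beta}\omega)$. The viscous commutators are $\partial_y\sum_{0<\beta\le\alpha}\binom{\alpha}{\beta}D^\beta(n\omega^{n-1})\,\partial_yD^{\alpha-\beta}\omega$.

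The good term comes from integrating the principal dissipation by parts. Since $(1+y)^\sigma\omega\le C\delta^{-1}$ with $\sigma(1-n)\le1$, we get $\omega^{n-1}\ge c\,\delta^{1-n}(1+y)^{-\sigma(1-n)}$. This gives a term $\ge n\delta^{1-n}\|(1+y)^{\gamma+\alpha_3-\sigma(1-n)/2}\partial_yD^\alpha\omega\|^2$. The weight loss $(1+y)^{-\sigma(1-n)/2}$ must be reconciled with the stated form. One option is to absorb the weights: the derivative of the weight produces $\omega^{n-1}(1+y)^{-1}$, and $\omega^{n-1}\lesssim(1+y)^{\sigma(1-n)}\le 1+y$ by the lower bound in $\mH^{s,\gamma}_{\sigma,\delta}$ and $\sigma\le 1/(1-n)$. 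So the weight-derivative term is bounded by $C\|\partial_yD^\alpha\omega\|_{L^2_{\gamma+\alpha_3}}\|D^\alpha\omega\|_{L^2_{\gamma+\alpha_3}}$, which Young's inequality splits into $\epsilon\|\partial_y\omega\|^2_{\mH^{s,\gamma}}+\epsilon^{-1}\|\omega\|^2$. This is exactly how the $\epsilon$ and $\epsilon^{-1}$ enter \eqref{qieguji}.

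Next come the boundary terms at $y=0$. If $\alpha_3=0$, the boundary condition $n\omega^{n-1}\partial_y\omega|_{y=0}=\partial_xp$ gives $\partial_y D^\alpha$-information through $D^\alpha\partial_xp$ plus lower-order products. If $\alpha_3\ge1$, we use the equation to trade $\partial_y^2$ at the boundary for $\partial_t,\partial_x$ derivatives and lower-order terms, reducing $\alpha_3$ in the manner of \cite{NM}. Time derivatives are included in the space, so this works for odd $s$ as well. The resulting traces are controlled by \eqref{L1} and \eqref{L1.5}, with $\epsilon$ placed on the $\partial_y$ factor, and the $p$ terms by $M$.

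The transport commutators are handled exactly as for Prandtl. Because $\alpha_1+\alpha_2\le s-1$, no term carries $s+1$ tangential derivatives. In $D^\beta v=-\partial_y^{-1}D^\beta\partial_xu$ we write $u=(u-U)+U$, and \eqref{L2}, \eqref{L4} and \eqref{L6} bound everything by $(1+M+\|u-U\|_{\mH^{s,\gamma-1}}+\|\omega\|_{\mH^{s,\gamma}})^2\|\omega\|_{\mH^{s,\gamma}}$. The term $v\partial_yD^\alpha\omega$ is integrated by parts and gives $\partial_yv=-\partial_xu$ and $v(1+y)^{-1}$, both bounded using \eqref{L3}.

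The main obstacle is the viscous commutators. We integrate by parts once, moving $\partial_y$ onto the test function to produce $\partial_yD^\alpha\omega$, so it costs $\epsilon\|\partial_y\omega\|^2_{\mH^{s,\gamma}}$. We must then bound $\|(1+y)^{\gamma+\alpha_3}D^\beta(\omega^{n-1})\partial_yD^{\alpha-\beta}\omega\|_{L^2}$. Lemma \ref{faa} expands $D^\beta(\omega^{n-1})$ as a sum of terms $\omega^{n-1-r}\prod_{i=1}^r D^{\beta^i}\omega$. The factor $\omega^{n-1-r}$ grows like $\delta^{-C}(1+y)^{\sigma(r+1-n)}$. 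Each $D^{\beta^i}\omega$ decays like $(1+y)^{-b_{\beta^i}}$ by Proposition \ref{shuaijian} when $|\beta^i|\le s-2$. At most one factor has top order, and that factor we put in weighted $L^2$ instead. We must check that the accumulated power is at most $\beta_3$ beyond the weight allotted to the $L^2$ factor. For low-order factors, $b_{\beta^i}=\sigma+\beta^i_3$ gives exact cancellation of $\sigma$ per factor, leaving $(1+y)^{\sigma(1-n)}\lesssim(1+y)$. This is absorbed by the dissipation weight argument, or by the fact that the $L^2$ factor then has one extra $\partial_y$. For higher factors, $b\ge\gamma+\beta_3$ loses at most $\sigma-\gamma\le 2/3$ per factor; this is where $\sigma\le\gamma+2/3$ is used. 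When $s=4$, Proposition \ref{shuaijian} requires $s\ge5$, so there we use the $L^\infty$ bounds from the definition of $\mH^{s,\gamma}_{\sigma,\delta}$ together with Lemma \ref{chazhi} instead. Counting the number of factors gives the power $2s+2$. Finally, summing over $\alpha$ and applying Young's inequality yields \eqref{qieguji}.
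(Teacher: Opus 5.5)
There is a genuine gap at exactly the step you identify as the main obstacle: the viscous commutators.

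Once you integrate $\py\bigl(D^\beta(n\om^{n-1})\,\py D^{\al-\beta}\om\bigr)$ by parts, the test factor $(1+y)^{\ga+\al_3}\py D^\al\om$ is controlled only by $\|\py\om\|_{\mH^{s,\ga}}$. That norm's weight does not increase for the extra $\py$. Meanwhile $\py D^{\al-\beta}\om$ carries only one extra normal derivative. So you need $(1+y)^{\beta_3-1}D^\beta(\om^{n-1})\in L^\infty$, which leaves you a single power of $(1+y)$ to spend.

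By Lemma \ref{faa} and Proposition \ref{shuaijian}, $D^\beta(\om^{n-1})$ grows like $(1+y)^{\si(1-n)-\beta_3+\ell}$. Here $\ell$ is the total interpolation loss of the factors, and it reaches $\si-\ga$ for the term $\om^{n-2}D^\beta\om$ with $|\beta|=s-2$. The requirement $\si(1-n)+\ell\le1$ then fails in the admissible range. Take $n=\tfrac25$, $\ga=1$, $\si=\tfrac53$ (so $\si(1-n)=1$ and $\si-\ga=\tfrac23$), with $s=5$, $\al=(0,4,1)$, $\beta=(0,3,0)$. Whether you put $\px^3\om$ or $\px\py^2\om$ in $L^\infty$, a factor $(1+y)^{2/3}$ is left over, and $\si\le\ga+\tfrac23$ cannot remove it.

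The paper never integrates these commutators by parts. It keeps the nondivergence form: $K_4$ with $D^\beta\om^{n-1}\py^2D^{\al-\beta}\om$, and $K_5$ with $D^\beta\om^{n-2}D^{\al-\beta}|\py\om|^2$.
\begin{itemize}
\item For $|\beta|\ge2$, the $L^2$ factor $\py^2D^{\al-\beta}\om$ has order at most $s$ and natural weight $(1+y)^{\ga+\al_3-\beta_3+2}$. That gives two spare powers: one for $\si(1-n)\le1$ and one for $\ell\le\si-\ga<1$. This is $c_r\le\si-\ga-1\le0$ in \eqref{K46}.
\item For $|\beta|=1$, that factor may have order $s+1$ and only one spare power in $\|\py\om\|_{\mH^{s,\ga}}$. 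But then $\ell=0$ and $\si(1-n)\le1$ suffices; see \eqref{b1}.
\item The hypothesis $\si\le\ga+\tfrac23$ is needed only for the $|\py\om|^2$ terms, \eqref{K460}.
\end{itemize}
If you expand $\py$ in your divergence-form commutator instead of moving it onto the test function, you recover the missing power.

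Separately, your lower bound on $\om^{n-1}$ has the sign of the exponent reversed. Since $n-1<0$, the bound $(1+y)^\si\om\le\de^{-1}$ gives $\om^{n-1}\ge\de^{1-n}(1+y)^{\si(1-n)}\ge\de^{1-n}$, which is \eqref{K113}. So the full-weight dissipation $n\de^{1-n}\|\py D^\al\om\|^2_{L^2_{\ga+\al_3}}$ comes out directly, and there is no weight loss to reconcile. Your proposed reconciliation actually bounds the weight-derivative term (the paper's $K_1^3$, \eqref{K131}). You handle that term correctly, but it could not restore weight missing from the dissipation. Had the loss been real, the term $C\ep\|\py\om\|^2_{\mH^{s,\ga}}$ on the right of \eqref{qieguji} could never be absorbed when the estimates are closed.

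The rest of your outline is consistent with the paper: the transport commutators, and the boundary terms handled via the flux condition for $\al_3=0$ and via the equation for $\al_3\ge1$.
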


\textbf{Proof of Proposition \ref{guji1}.}
Let $\al$ satisfy $|\al|\leq s$ and $\al_1+\al_2\leq s-1$. Applying $D^{\al}$ to \eqref{wodu}, it yields
\begin{equation}\label{alwodu}
\begin{split}
&\pt D^{\al}\om +u\px D^{\al}\om +v\py D^{\al}\om-n\om^{n-1}\py^2 D^{\al}\om\\
=
&-\sum_{0<\beta\leq\al}\binom{\al}{\beta}
\left(D^{\beta}u\px D^{\al-\beta}\om+D^{\beta}v\py D^{\al-\beta}\om\right)\\
&+n\sum_{0<\beta\leq\al}\binom{\al}{\beta}
D^{\beta}\om^{n-1}\py^2 D^{\al-\beta}\om
+n(n-1)\sum_{0\leq\beta\leq\al}\binom{\al}{\beta}
D^{\beta}\om^{n-2}D^{\al-\beta}|\py\om|^2.
\end{split}
\end{equation}
Taking $L^2$ inner product of \eqref{alwodu} with $(1+y)^{2\ga+2\al_3}D^{\al}\om$ yields
\begin{equation}\label{neiji}
\begin{split}
&\frac{1}{2}\frac{d}{dt}\|(1+y)^{\ga+\al_3} D^{\al}\om\|^2_{L^2}\\
=&\int_{\Om}n(1+y)^{2\ga+2\al_3}\om^{n-1}D^{\al}\om\py^2 D^{\al}\om dxdy\\
&-\int_{\Om}(1+y)^{2\ga+2\al_3}D^{\al}\om
\Big(u\px D^{\al}\om +v\py D^{\al}\om\Big) dxdy\\
&-\sum_{0<\beta\leq\al}\binom{\al}{\beta}
\int_{\Om}(1+y)^{2\ga+2\al_3}D^{\al}\om\Big(D^{\beta}u\px D^{\al-\beta}\om+D^{\beta}v\py D^{\al-\beta}\om\Big)dxdy\\
&+n\sum_{0<\beta\leq\al}\binom{\al}{\beta}
\int_{\Om}(1+y)^{2\ga+2\al_3}D^{\al}\om
D^{\beta}\om^{n-1}\py^2 D^{\al-\beta}\om dxdy\\
&+n(n-1)\sum_{0\leq\beta\leq\al}\binom{\al}{\beta}
\int_{\Om}(1+y)^{2\ga+2\al_3}D^{\al}\om
D^{\beta}\om^{n-2}D^{\al-\beta}|\py\om|^2 dxdy\\
:=&\sum\limits_{i=1}^5 K_i.
\end{split}
\end{equation}

For $K_1$, by integration by parts, we have
\begin{equation}\label{K1jisuan}
\begin{split}
K_1=
&-n\int_{\Omega}(1+y)^{2\ga+2\al_3}\om^{n-1}|\py D^{\al}\om|^2 dxdy\\
&-n(n-1)\int_{\Omega}(1+y)^{2\ga+2\al_3}\om^{n-2}\py\om D^{\al}\om
\py D^{\al}\om dxdy\\
&-n(2\ga+2\al_3)\int_{\Omega}(1+y)^{2\ga+2\al_3-1}\om^{n-1} D^{\al}\om
\py D^{\al}\om dxdy\\
&-n\int_{\T}(\om^{n-1} D^{\al}\om
\py D^{\al}\om)|_{y=0}dx\\
:=&\sum\limits_{i=1}^4 K^i_{1}.
\end{split}
\end{equation}
For $K^1_{1}$, since $\om\in\mH^{s,\gamma}_{\si,\de}$, we have 
\begin{equation}\label{K111}
\de\leq(1+y)^{\si}\om\leq\de^{-1}.
\end{equation}
On the other hand, from $\frac{1}{3}<n<1$, we know
\begin{equation}\label{K112}
(1+y)^{\si(n-1)}\om^{n-1}\geq\de^{1-n},
\end{equation}
then
\begin{equation}\label{K113}
\om^{n-1}\geq(1+y)^{\si(1-n)}\de^{1-n}\geq \de^{1-n}.
\end{equation}
Applying \eqref{K113}, we obtain
\begin{equation}\label{K114}
\begin{split}
K^1_1&\leq-n\de^{1-n}\int_{\Om}(1+y)^{2\ga+2\al_3}|\py D^{\al}\om|^2 dxdy\\
&=-n\de^{1-n}\|(1+y)^{\ga+\al_3}\py D^{\al}\om\|^2_{L^2}.
\end{split}
\end{equation}
For $K^2_{1}$, similarly, with $\om\in\mH^{s,\gamma}_{\si,\de}$, we can obtain 
\begin{equation}\label{K121}
|(1+y)^{\si(n-l)}\om^{n-l}|\leq\de^{n-l},
\end{equation}
for any $l\geq1$, and 
\begin{equation}\label{K122}
|(1+y)^{\si+1}\py\om|\leq\de^{-1}.
\end{equation}
Using \eqref{K121} and \eqref{K122} yields 
\begin{equation}\label{K123}
|\om^{n-2}\py\om|
\leq(1+y)^{-\si(n-1)-1}\de^{n-3}.
\end{equation}
Noting that $\si\leq\frac{1}{1-n}$, we have 
\begin{equation}\label{henyouyong}
-\si(n-1)-1\leq0.
\end{equation}
Substituting it into \eqref{K123}, we deduce that 
\begin{equation}\label{K124}
\begin{split}
|\om^{n-2}\py\om|
\leq\de^{n-3}.
\end{split}
\end{equation}
Substituting \eqref{K124} into $K^2_{1}$, we can get
\begin{equation}\label{K125}
\begin{split}
|K^2_1|&\leq n(1-n)\de^{n-3}\int_{\Omega}(1+y)^{2\ga+2\al_3}|D^{\al}\om
\py D^{\al}\om| dxdy\\
&\leq n(1-n)\de^{n-3} 
\|(1+y)^{\ga+\al_3}D^{\al}\om\|_{L^2}
\|(1+y)^{\ga+\al_3}\py D^{\al}\om\|_{L^2}\\
&\leq\ep\|\py\omega\|_{\mH^{s, \gamma}}^2
+C\ep^{-1}\|\omega\|_{\mH^{s, \gamma}}^2,
\end{split}
\end{equation}
for any small $0<\ep<1$.

For $K^3_{1}$, by using \eqref{K121} and \eqref{henyouyong}, we have the following formula holds,
\begin{equation}\label{K131}
\begin{split}
|K^3_1|
\leq &n(2\ga+2\al_3)\int_{\Omega}
|(1+y)^{-1}\om^{n-1}|
(1+y)^{2\ga+2\al_3}|D^{\al}\om
\py D^{\al}\om| dxdy\\
\leq &n(2\ga+2\al_3)\de^{n-1} 
\|(1+y)^{\ga+\al_3}D^{\al}\om\|_{L^2}
\|(1+y)^{\ga+\al_3}\py D^{\al}\om\|_{L^2}\\
\leq&\ep\|\py\omega\|_{\mH^{s, \gamma}}^2
+C\ep^{-1}\|\omega\|_{\mH^{s, \gamma}}^2.
\end{split}
\end{equation}
Finally, we turn to address the boundary $K^4_1$. We have the following estimate, which will be shown later.
\begin{equation}\label{K140}
\begin{split}
|K^4_1|\leq C\ep\|\py\omega\|_{\mH^{s, \gamma}}^2
+C\ep^{-1}(1+M+\|u-U\|_{\mH^{s, \ga-1}}+\|\omega\|_{\mH^{s, \gamma}})^{2s+2}.
\end{split}
\end{equation}
After substituting the estimates for all $K^i_1$ into $K_1$, we can obtain
\begin{equation}\label{K1deguji}
\begin{split}
K_1\leq -n\de^{1-n}\|(1+y)^{\ga+\al_3}&\py D^{\al}\om\|^2_{L^2}+C\ep\|\py\omega\|_{\mH^{s, \gamma}}^2\\
&+C\ep^{-1}(1+M+\|u-U\|_{\mH^{s, \ga-1}}+\|\omega\|_{\mH^{s, \gamma}})^{2s+2}.
\end{split}
\end{equation}

For $K_2$, by integration by parts, we have
\begin{equation*}
\begin{split}
|K_2|=(\ga+\al_3)\Big|\int_{\Om}(1+y)^{2\ga+2\al_3-1}v(D^{\al}\om)^2 dxdy\Big|\\
\leq (\ga+\al_3)\|(1+y)^{-1}v\|_{L^{\ty}}
\|D^{\al}\om\|_{L^2}^2.
\end{split}
\end{equation*}
On the other hand, by $\eqref{L3}_2$ and Sobolev embedding inequality, we obtain
\begin{equation}\label{vdeguji}
\begin{split}
\|(1+y)^{-1}v\|_{L^{\ty}}=
&\|(1+y)^{-1}\py^{-1}D^{e_2}u\|_{L^{\ty}}\\
\leq&\|D^{e_2}u\|_{L^{\ty}}\\
\leq&\|D^{e_2}(u-U)\|_{L^{\ty}}+\|D^{e_2}U\|_{L^{\ty}_x}\\
\leq&\|D^{e_2}(u-U)\|_{H^{2}}+\|D^{e_2}U\|_{H^{1}_x}\\
\leq&\|u-U\|_{\mH^{s, \gamma-1}}+M.
\end{split}
\end{equation}
Hence, we can estimate $K_2$ according to the following formula.
\begin{equation}\label{K2deguji}
\begin{split}
|K_2|\leq C(\|u-U\|_{\mH^{s, \gamma-1}}+M)\|\omega\|_{\mH^{s, \gamma}}^2.
\end{split}
\end{equation}

For $K_3$, $K_4$ and $K_5$, we have the following estimates, which will be shown later.
\begin{equation}\label{K3deguji}
\begin{split}
|K_3|\leq C (\|u-U\|_{\mH^{s, \ga-1}}+\|\om\|_{\mH^{s, \ga}}+M)
\|\om\|_{\mH^{s, \ga}}^2,
\end{split}
\end{equation}
\begin{equation}\label{K4deguji}
\begin{split}
|K_4|\leq \ep\|\py\omega\|_{\mH^{s, \gamma}}^2
+C\ep^{-1}(1+\|\omega\|_{\mH^{s, \gamma}})^{s},
\end{split}
\end{equation}
and
\begin{equation}\label{K5deguji}
\begin{split}
|K_5|\leq \ep\|\py\omega\|_{\mH^{s, \gamma}}^2
+C\ep^{-1}(1+\|\omega\|_{\mH^{s, \gamma}})^{s+1}.
\end{split}
\end{equation}

Substituting all estimates of $K_i$ into \eqref{neiji} and summing over $\al$ , we can establish that \eqref{qieguji} holds.

\hfill $\square$

$Proof~of~\eqref{K3deguji}$: First, based on H\"{o}lder's inequality, it yields that
\begin{equation}\label{K3jisuan}
\begin{split}
&\Big|\int_{\Om}(1+y)^{2\ga+2\al_3}D^{\al}\om D^{\beta}u\px D^{\al-\beta}\om dxdy\Big|\\
\leq
&\|(1+y)^{\ga+\al_3}D^{\beta}u\px D^{\al-\beta}\om\|_{L^2}
\|(1+y)^{\ga+\al_3}D^{\al}\om\|_{L^2}.
\end{split}
\end{equation}
If $\beta_3>0$, then using \eqref{L2} yields
\begin{equation*}
\begin{split}
\|(1+y)^{\ga+\al_3}D^{\beta}u\px D^{\al-\beta}\om\|_{L^2}
=&\|(1+y)^{\ga+\al_3}D^{\beta-e_3}\om D^{\al-\beta+e_2}\om\|_{L^2}\\
\leq &C\|\om\|_{\mH^{s, \ga}}^2.
\end{split}
\end{equation*}
If $\beta_3=0$, then $|\beta|\leq s-1$. Hence, we obtain by \eqref{L5} and Sobolev embedding inequality
\begin{equation*}
\begin{split}
\|(1+y)^{\ga+\al_3}D^{\beta}u\px D^{\al-\beta}\om\|_{L^2}
\leq &\|\py^{-1}D^{\beta}\om\|_{L^{\ty}}
\|(1+y)^{\ga+\al_3}\px D^{\al-\beta}\om\|_{L^2}\\
\leq &C\|D^{\beta}\om\|_{H_x^1L^{2}_{y,1}}
\|(1+y)^{\ga+\al_3}\px D^{\al-\beta}\om\|_{L^2}\\
\leq &C \|\om\|_{\mH^{|\beta|+1, 1}} \|\om\|_{\mH^{s, \ga}}\\
\leq &C \|\om\|_{\mH^{s, \ga}}^2.
\end{split}
\end{equation*}
Next,
\begin{equation}\label{chadianwangle}
\begin{split}
&\Big|\int_{\Om}(1+y)^{2\ga+2\al_3}D^{\al}\om D^{\beta}v\py D^{\al-\beta}\om dxdy\Big|\\
\leq
&\|(1+y)^{\ga+\al_3}D^{\beta}v\py D^{\al-\beta}\om\|_{L^2}
\|(1+y)^{\ga+\al_3}D^{\al}\om\|_{L^2}.
\end{split}
\end{equation}
If $\beta_3>0$, then by $\px u+\py v=0$, we have similarly
\begin{equation*}
\begin{split}
\|(1+y)^{\ga+\al_3}D^{\beta}v\py D^{\al-\beta}\om\|_{L^2}
=&\|(1+y)^{\ga+\al_3}D^{\beta-e_3+e_2}u D^{\al-\beta+e_3}\om\|_{L^2}\\
\leq&C \|\om\|_{\mH^{s, \ga}}^2.
\end{split}
\end{equation*}
If $\beta_3=0$, then $|\beta|\leq s-1$. Thus, for the case $|\beta|=s-1$, it gives by $\eqref{L3}_2$
\begin{equation}\label{dbvdeguji}
\begin{split}
\|(1+y)^{-1}D^{\beta}v\|_{L^2_x L^{\ty}_y}=
&\|(1+y)^{-1}\py^{-1}D^{\beta+e_2}u\|_{L^2_x L^{\ty}_y}\\
\leq&\|D^{\beta+e_2}u\|_{L^2_x L^{\ty}_y}\\
\leq&\|D^{\beta+e_2}(u-U)\|_{L^2_x L^{\ty}_y}+\|D^{\beta+e_2}U\|_{L^{2}_x}\\
\leq&\|D^{\beta+e_2}(u-U)\|_{L^2_x H^{1}_y}
+\|D^{\beta+e_2}U\|_{L^{2}_x}\\
\leq&\|u-U\|_{\mH^{s, 0}}+\|\om\|_{\mH^{s, 0}}+\|D^{\beta+e_2}U\|_{L^{2}_x}\\
\leq&\|u-U\|_{\mH^{s, \ga-1}}+\|\om\|_{\mH^{s, \ga}}+M,
\end{split}
\end{equation}
where we use the fact $\om=\py u$. Then
\begin{equation*}
\begin{split}
&\|(1+y)^{\ga+\al_3}D^{\beta}v\py D^{\al-\beta}\om\|_{L^2}
\\
\leq&\|(1+y)^{-1}D^{\beta}v\|_{L^2_x L^{\ty}_y}
\|(1+y)^{\ga+\al_3+1}\py D^{\al-\beta}\om\|_{L^{\ty}_x L^2_y}\\
\leq&C (\|u-U\|_{\mH^{s, \ga-1}}+\|\om\|_{\mH^{s, \ga}}+M)
\|(1+y)^{\ga+\al_3+1}\py D^{\al-\beta}\om\|_{H^{1}}\\
\leq&C (\|u-U\|_{\mH^{s, \ga-1}}+\|\om\|_{\mH^{s, \ga}}+M)
\|\om\|_{\mH^{s, \ga}}.
\end{split}
\end{equation*}
For the case $|\beta|\leq s-2$, similar to \eqref{dbvdeguji}, we can obtain
\begin{equation}\label{dbvdeguji2}
\begin{split}
\|(1+y)^{-1}D^{\beta}v\|_{L^{\ty}}
\leq\|u-U\|_{\mH^{s, \ga-1}}+\|\om\|_{\mH^{s, \ga}}+M.
\end{split}
\end{equation}
Hence, it follows
\begin{equation*}
\begin{split}
&\|(1+y)^{\ga+\al_3}D^{\beta}v\py D^{\al-\beta}\om\|_{L^2}
\\
\leq&\|(1+y)^{-1}D^{\beta}v\|_{L^{\ty}}
\|(1+y)^{\ga+\al_3+1}\py D^{\al-\beta}\om\|_{L^2}\\
\leq&C (\|u-U\|_{\mH^{s, \ga-1}}+\|\om\|_{\mH^{s, \ga}}+M)
\|\om\|_{\mH^{s, \ga}}.
\end{split}
\end{equation*}
Substituting the above inequalities into \eqref{K3jisuan} and \eqref{chadianwangle} yields \eqref{K3deguji}.

\hfill $\square$

$Proof~of~\eqref{K4deguji}$: When $s=4$, the result can be obtained by direct computation. Therefore, we only provide the proof for $s\geq5$ here, which requires the use of Lemma \ref{faa} and Proposition \ref{shuaijian}.

For $|\beta|=1$, using H\"{o}lder's inequality yields that
\begin{equation}\label{b1}
\begin{split}
&\Big|\int_{\Om}(1+y)^{2\ga+2\al_3}D^{\al}\om
D^{\beta}\om^{n-1}\py^2 D^{\al-\beta}\om dxdy\Big|\\
=&(1-n)\Big|\int_{\Om}(1+y)^{2\ga+2\al_3}D^{\al}\om
\cdot\om^{n-2}D^{\beta}\om\py^2 D^{\al-\beta}\om dxdy\Big|\\
\leq &C\|(1+y)^{-1+\beta_3}\om^{n-2}D^{\beta}\om\|_{L^{\ty}}
\|(1+y)^{\ga+\al_3}D^{\al}\om\|_{L^2}
\|(1+y)^{\ga+\al_3+1-\beta_3}\py D^{\al+e_3-\beta}\om\|_{L^2}.
\end{split}
\end{equation}
Combining \eqref{shuaijian5}, \eqref{K121} and \eqref{henyouyong} gives
\begin{equation*}
\|(1+y)^{-1+\beta_3}\om^{n-2}D^{\beta}\om\|_{L^{\ty}}\leq C.
\end{equation*}
Hence
\begin{equation}\label{b2}
\begin{split}
\Big|\int_{\Om}(1+y)^{2\ga+2\al_3}D^{\al}\om
D^{\beta}\om^{n-1}\py^2 D^{\al-\beta}\om dxdy\Big|
\leq C
\|\om\|_{\mH^{s, \ga}}
\|\py\om\|_{\mH^{s, \ga}}.
\end{split}
\end{equation}

If $1<|\beta|\leq s-2$, we can deduce
\begin{equation}\label{b3}
\begin{split}
&\Big|\int_{\Om}(1+y)^{2\ga+2\al_3}D^{\al}\om
D^{\beta}\om^{n-1}\py^2 D^{\al-\beta}\om dxdy\Big|\\
\leq &C\|(1+y)^{-2+\beta_3}D^{\beta}\om^{n-1}\|_{L^{\ty}}
\|(1+y)^{\ga+\al_3}D^{\al}\om\|_{L^2}
\|(1+y)^{\ga+\al_3+2-\beta_3}\py^2 D^{\al-\beta}\om\|_{L^2}.
\end{split}
\end{equation}
By using Lemma \ref{faa}, we have
\begin{equation}\label{K41}
D^{\beta}\om^{n-1} = \sum_{r=1}^{|\beta|} (n-1)^{\underline{r}} \, \om^{n-1-r} \sum_{\substack{\beta^1 + \cdots + \beta^r = \beta \\ |\beta^i| \ge 1}} \frac{\beta!}{r!} \prod_{i=1}^r \frac{D^{\beta^i} \om}{\beta^i!}.
\end{equation}
Let $r_0$ denote the number of $\beta^i$ such that $|\beta^i|=1$, then by \eqref{shuaijian5}, \eqref{K121} and $\beta^1 + \cdots + \beta^r = \beta$, we have
\begin{equation}\label{K43}
\begin{split}
\|(1+y)^{-2+\beta_3}\om^{n-1-r}\prod_{i=1}^r D^{\beta^i} \om\|_{L^{\ty}}
\leq C(1+\|\omega\|_{\mH^{s, \gamma}})^r
\|(1+y)^{c_r}\|_{L^{\ty}},
\end{split}
\end{equation}
where
\begin{equation}\label{K44}
\begin{split}
c_r=&-2-\si(n-1-r)-\si r_0-\frac{(s-2)\si(r-r_0)-2\ga(r-r_0)}{s-4}\\
&+\frac{(|\beta|-r_0)(\si-\ga)}{s-4}.
\end{split}
\end{equation}

Case 1: if $r=r_0$, then it must be that $r=r_0=|\beta|$. In this time, we have
\begin{equation}\label{K45}
\begin{split}
c_r=-2-\si(n-1)<0.
\end{split}
\end{equation}

Case 2: if $r-r_0\geq1$, then we have 
\begin{equation}\label{K46}
\begin{split}
c_r&=-2-\si(n-1)+\frac{(|\beta|-r)(\si-\ga)}{s-4}
-\frac{(\si-\ga)(r-r_0)}{s-4}\\
&\leq-2-\si(n-1)+\frac{(|\beta|-r)(\si-\ga)}{s-4}
-\frac{\si-\ga}{s-4}\\
&\leq-2-\si(n-1)+\frac{(s-3)(\si-\ga)}{s-4}
-\frac{\si-\ga}{s-4}\\
&=\si-\ga-2-\si(n-1)\\
&\leq\si-\ga-1\\
&\leq0.
\end{split}
\end{equation}
By combining \eqref{b3}-\eqref{K46}, we get that for $1<|\beta|\leq s-2$, there holds
\begin{equation}\label{xinhaolei}
\begin{split}
\Big|\int_{\Om}(1+y)^{2\ga+2\al_3}D^{\al}\om
D^{\beta}\om^{n-1}\py^2 D^{\al-\beta}\om dxdy\Big|
\leq C(1+\|\omega\|_{\mH^{s, \gamma}})^s.
\end{split}
\end{equation}

Now for $|\beta|=s-1$, when all $\beta^i$ in \eqref{K41} satisfy $|\beta^i|\leq s-2$, we can see that \eqref{xinhaolei} still holds. When there exists $|\beta^1|=s-1$, we have $r=1$ and $\beta^1=\beta$. Thus it holds
\begin{equation*}
\begin{split}
&\|(1+y)^{\ga+\al_3}\om^{n-1-r}\prod_{i=1}^r D^{\beta^i} \om
\py^2 D^{\al-\beta}\om
\|_{L^2}\\
=&\|(1+y)^{\ga+\al_3}\om^{n-2}D^{\beta} \om
\py^2 D^{\al-\beta}\om
\|_{L^2}\\
\leq&\|(1+y)^{-\ga-2-\si(n-2)}\|_{L^{\ty}}
\|(1+y)^{\si(n-2)}\om^{n-2}\|_{L^{\ty}}\\
&\cdot\|(1+y)^{\ga+\beta_3}D^{\beta}\om\|_{L^{\ty}_x L^{2}_y}
\|(1+y)^{\ga+\al_3-\beta_3+2}D^{\al-\beta+2e_3}\om\|_{L^2_x L^{\ty}_y}
\end{split}
\end{equation*}
By using $-\ga-2-\si(n-2)\leq\si-\ga-1\leq0$, \eqref{K121} and Sobolev embedding inequality, we obtain
\begin{equation*}
\begin{split}
&\|(1+y)^{\ga+\al_3}\om^{n-1-r}\prod_{i=1}^r D^{\beta^i} \om
\py^2 D^{\al-\beta}\om
\|_{L^2}\\
\leq&C\|(1+y)^{\ga+\beta_3}D^{\beta}\om\|_{H^1_x L^{2}_y}
\|(1+y)^{\ga+\al_3-\beta_3+2}D^{\al-\beta+2e_3}\om\|_{L^2_x H^{1}_y}\\
\leq&C\|\om\|_{\mH^{s, \gamma}}
\|\om\|_{\mH^{4, \gamma}}\\
\leq&C\|\om\|_{\mH^{s, \gamma}}^2.
\end{split}
\end{equation*}
Next for $|\beta|=s$, we can estimate by using the same method. In summary, we deduce that \eqref{K4deguji} holds by applying Cauchy's inequality.

\hfill $\square$

$Proof~of~\eqref{K5deguji}$: Similar to the proof of $\eqref{K4deguji}$, we also provide the proof here only for the case $s\geq5$.

First, we have
\begin{equation}\label{K51}
\begin{split}
D^{\al-\beta}|\py\om|^2=
\sum_{0\leq\tilde{\beta}\leq\al-\beta}
\binom{\al-\beta}{\tilde{\beta}}
D^{\tilde{\beta}}\py\om
D^{\al-\beta-\tilde{\beta}}\py\om.
\end{split}
\end{equation}
For $|\beta|\leq 1$, it easy to deduce by using $\om\in\mH^{s,\gamma}_{\si,\de}$
\begin{equation}\label{haolei}
|D^{\beta}\om^{n-2}|\leq C (1+y)^{-\si(n-2)-\beta_3}.
\end{equation}
By Sobolev embedding inequality, it gives that if $0<|\tilde{\beta}|\leq[\frac{s}{2}]$ 
\begin{equation*}
\begin{split}
&\|(1+y)^{2\ga+2+\al_3-\beta_3}D^{\tilde{\beta}}\py\om
D^{\al-\beta-\tilde{\beta}}\py\om\|_{L^2}\\
\leq 
&\|(1+y)^{\ga+1+\tilde{\beta}_3}D^{\tilde{\beta}}\py\om
\|_{L^{\ty}}
\|(1+y)^{\ga+1+\al_3-\beta_3-\tilde{\beta}_3}
D^{\al-\beta-\tilde{\beta}}\py\om\|_{L^2}\\
\leq &C \|\om\|_{\mH^{s, \gamma}}^2.
\end{split}
\end{equation*}
And if $|\tilde{\beta}|=0$,
\begin{equation*}
\begin{split}
&\|(1+y)^{\si+\ga+1+\al_3-\beta_3}\py\om
D^{\al-\beta}\py\om\|_{L^2}\\
\leq 
&\|(1+y)^{\si+1}\py\om
\|_{L^{\ty}}
\|(1+y)^{\ga+\al_3-\beta_3}
D^{\al-\beta}\py\om\|_{L^2}\\
\leq &C \|\py\om\|_{\mH^{s, \gamma}}.
\end{split}
\end{equation*}
If $[\frac{s}{2}]<|\tilde{\beta}|\leq|\al-\beta|$, it is similar. And since $\si+\ga+1<2\ga+2$, we have for $|\beta|\leq 1$
\begin{equation}\label{kuse}
\begin{split}
\|(1+y)^{\si+\ga+1+\al_3-\beta_3}D^{\al-\beta}|\py\om|^2\|_{L^2}
\leq C (\|\py\om\|_{\mH^{s, \gamma}}+\|\om\|_{\mH^{s, \gamma}}^2).
\end{split}
\end{equation}
Thus, there holds by \eqref{haolei}
\begin{equation}\label{miao}
\begin{split}
&\Big|\int_{\Om}(1+y)^{2\ga+2\al_3}D^{\al}\om
D^{\beta}\om^{n-2}D^{\al-\beta}|\py\om|^2 dxdy\Big|\\
\leq&
\|(1+y)^{-\si-1+\beta_3}D^{\beta}\om^{n-2}\|_{L^{\ty}} 
\|(1+y)^{\ga+\al_3}D^{\al}\om\|_{L^2}
\|(1+y)^{\si+\ga+1+\al_3-\beta_3}D^{\al-\beta}|\py\om|^2\|_{L^2}\\
\leq&
C\|(1+y)^{-\si-1-\si(n-2)}\|_{L^{\ty}} 
(\|\py\om\|_{\mH^{s, \gamma}}+\|\om\|_{\mH^{s, \gamma}}^2)\|\om\|_{\mH^{s, \gamma}}\\
\leq&
\ep\|\py\omega\|_{\mH^{s, \gamma}}^2
+C\ep^{-1}(1+\|\omega\|_{\mH^{s, \gamma}})^{4},
\end{split}
\end{equation}
here, for the last inequality, we have used \eqref{henyouyong}.

For $1<|\beta|\leq s-2$, we know that at least one of $|\tilde{\beta}+e_3|$ and $|\al-\beta-\tilde{\beta}+e_3|$ does not exceed $[\frac{s}{2}]$, and we may assume $|\tilde{\beta}+e_3|\leq[\frac{s}{2}]$. In this time, we can obtain by \eqref{shuaijian5},
\begin{equation}\label{haoleia}
|D^{\tilde{\beta}}\py\om|\leq C(1+\|\om\|_{\mH^{s, \gamma}}) (1+y)^{-\frac{\si+\ga}{2}-1-\tilde{\beta}_3}.
\end{equation}
Therefore, 
\begin{equation*}
\begin{split}
&\|(1+y)^{\frac{\si}{2}+\frac{3}{2}\ga+2+\al_3-\beta_3}D^{\tilde{\beta}}\py\om
D^{\al-\beta-\tilde{\beta}}\py\om\|_{L^2}\\
\leq 
&\|(1+y)^{\frac{\si+\ga}{2}+1+\tilde{\beta}_3}D^{\tilde{\beta}}\py\om
\|_{L^{\ty}}
\|(1+y)^{\ga+1+\al_3-\beta_3-\tilde{\beta}_3}
D^{\al-\beta-\tilde{\beta}}\py\om\|_{L^2}\\
\leq &C \|\om\|_{\mH^{s, \gamma}}(1+\|\om\|_{\mH^{s, \gamma}}),
\end{split}
\end{equation*}
Then
\begin{equation*}
\begin{split}
\|(1+y)^{\frac{\si}{2}+\frac{3}{2}\ga+2+\al_3-\beta_3}
D^{\al-\beta}|\py\om|^2\|_{L^2}
\leq C \|\om\|_{\mH^{s, \gamma}}(1+\|\om\|_{\mH^{s, \gamma}}),
\end{split}
\end{equation*}
On the other hand, similar to \eqref{K41}-\eqref{K46}, we obtain
\begin{equation}\label{K410}
D^{\beta}\om^{n-2} = \sum_{r=1}^{|\beta|} (n-2)^{\underline{r}} \, \om^{n-2-r} \sum_{\substack{\beta^1 + \cdots + \beta^r = \beta \\ |\beta^i| \ge 1}} \frac{\beta!}{r!} \prod_{i=1}^r \frac{D^{\beta^i} \om}{\beta^i!}.
\end{equation}
And
\begin{equation}\label{K430}
\begin{split}
\|(1+y)^{-\frac{\si+\ga}{2}-2+\beta_3}\om^{n-2-r}\prod_{i=1}^r D^{\beta^i} \om\|_{L^{\ty}}
\leq C(1+\|\omega\|_{\mH^{s, \gamma}})^r
\|(1+y)^{d_r}\|_{L^{\ty}},
\end{split}
\end{equation}
where
\begin{equation}\label{K440}
\begin{split}
d_r=&-\frac{\si+\ga}{2}-2-\si(n-2-r)-\si r_0-\frac{(s-2)\si(r-r_0)-2\ga(r-r_0)}{s-4}\\
&+\frac{(|\beta|-r_0)(\si-\ga)}{s-4},
\end{split}
\end{equation}
and $r_0$ denote the number of $\beta^i$ such that $|\beta^i|=1$.

Case 1: if $r=r_0$, then it must be that $r=r_0=|\beta|$. In this time, we have
\begin{equation}\label{K450}
\begin{split}
d_r=-\frac{\si+\ga}{2}-2-\si(n-2)<\frac{1}{2}(\si-\ga-1)<0.
\end{split}
\end{equation}

Case 2: if $r-r_0\geq1$, then we have 
\begin{equation}\label{K460}
\begin{split}
c_r&=-\frac{\si+\ga}{2}-2-\si(n-2)
+\frac{(|\beta|-r)(\si-\ga)}{s-4}
-\frac{(\si-\ga)(r-r_0)}{s-4}\\
&\leq-\frac{\si+\ga}{2}-2-\si(n-2)
+\frac{(|\beta|-r)(\si-\ga)}{s-4}
-\frac{\si-\ga}{s-4}\\
&\leq-\frac{\si+\ga}{2}-2-\si(n-2)
+\frac{(s-3)(\si-\ga)}{s-4}
-\frac{\si-\ga}{s-4}\\
&=\frac{3}{2}(\si-\ga-\frac{2}{3})-1-\si(n-1)\\
&\leq\frac{3}{2}(\si-\ga-\frac{2}{3})\\
&\leq0.
\end{split}
\end{equation}

Then, by combining \eqref{K410}-\eqref{K460}, we have for $1<|\beta|\leq s-2$, there holds
\begin{equation}\label{xinhaolei2}
\begin{split}
&\Big|\int_{\Om}(1+y)^{2\ga+2\al_3}D^{\al}\om
D^{\beta}\om^{n-2}D^{\al-\beta}|\py\om|^2 dxdy\Big|\\
\leq &\|(1+y)^{-\frac{\si+\ga}{2}-2+\beta_3}D^{\beta}\om^{n-2}\|_{L^{\ty}}
\|(1+y)^{\ga+\al_3}D^{\al}\om\|_{L^2}
\|(1+y)^{\frac{\si}{2}+\frac{3}{2}\ga+2+\al_3-\beta_3}
D^{\al-\beta}|\py\om|^2\|_{L^2}\\
\leq &C(1+\|\omega\|_{\mH^{s, \gamma}})^{s+1}.
\end{split}
\end{equation}
Finally, for $|\beta|=s-1$ or $s$, one may refer to the estimate of $K_4$; for simplicity, we omit it. In summary, we deduce that \eqref{K5deguji} holds by applying Cauchy's inequality.

\hfill $\square$

$Proof~of~\eqref{K140}$: We consider the following two cases.

Case 1. $|\al|\leq s-1$. By using inequality \eqref{L1}, we deduce that
\begin{equation}\label{K141}
\begin{split}
|K^4_1|
&=n\Big|\int_{\T}(\om^{n-1} D^{\al}\om
\py D^{\al}\om)|_{y=0}dx\Big|\\
&\leq
n\|\om^{n-1}D^{\al}\om\|_{L^2}\|\py^2 D^{\al}\om\|_{L^2}
+n\|\py(\om^{n-1}D^{\al}\om)\|_{L^2}\|\py D^{\al}\om\|_{L^2}.
\end{split}
\end{equation}
Using \eqref{K121} and \eqref{henyouyong} yields
\begin{equation}\label{K142}
|\om^{n-1}|\leq\de^{n-1}(1+y)^{-\si(n-1)} \leq \de^{n-1}(1+y).
\end{equation}
On the other hand, by \eqref{K124} and \eqref{K142}, we have
\begin{equation}\label{K143}
\begin{split}
|\py(\om^{n-1}D^{\al}\om)|
&=|(n-1)\om^{n-2}\py\om D^{\al}\om+\om^{n-1}\py D^{\al}\om|\\
&\leq(1-n)\de^{n-3}|D^{\al}\om|+\de^{n-1}(1+y)|\py D^{\al}\om|.
\end{split}
\end{equation}
Substituting \eqref{K142} and \eqref{K143} into \eqref{K143} yields
\begin{equation}\label{K144}
\begin{split}
|K^4_1|\leq
&n\de^{n-1}\|(1+y)D^{\al}\om\|_{L^2}\|\py^2 D^{\al}\om\|_{L^2}\\
&+n(1-n)\de^{n-3}\|D^{\al}\om\|_{L^2}\|\py D^{\al}\om\|_{L^2}\\
&+n\de^{n-1}\|(1+y)\py D^{\al}\om\|_{L^2}\|\py D^{\al}\om\|_{L^2}.
\end{split}
\end{equation}
Using the fact that $\ga\geq1$ yields 
\begin{equation}\label{K145}
\begin{split}
|K^4_1|\leq
&n\de^{n-1}\|(1+y)^{\ga}D^{\al}\om\|_{L^2}\|\py^2 D^{\al}\om\|_{L^2}\\
&+n(1-n)\de^{n-3}\|D^{\al}\om\|_{L^2}\|\py D^{\al}\om\|_{L^2}\\
&+n\de^{n-1}\|(1+y)^{\ga}\py D^{\al}\om\|_{L^2}\|\py D^{\al}\om\|_{L^2}\\
\leq&\ep\|\py\omega\|_{\mH^{s, \gamma}}^2
+C\ep^{-1}\|\omega\|_{\mH^{s, \gamma}}^2,
\end{split}
\end{equation}
where we use the fact that $|\al|\leq s-1$.

Case 2. $|\al|=\al_1+\al_2+\al_3=s$. Since $\al_1+\al_2\leq s-1$, we have
$\al_3\geq1$. Then we can set $\tilde{\al}:=\al-e_3=(\al_1,\al_2,\al_3-1)$ with $|\tilde{\al}|=s-1$. At this point, we have $\py D^{\al}=\py^2 D^{\tilde{\al}}$. Hence the equation \eqref{alwodu} reads
\begin{equation}\label{K146}
\begin{split}
&n\om^{n-1}\py D^{\al}\om\\
=&\pt D^{\tilde{\al}}\om +u\px D^{\tilde{\al}}\om +v\py D^{\tilde{\al}}\om+\sum_{0<\beta\leq\tilde{\al}}\binom{\tilde{\al}}{\beta}
\left(D^{\beta}u\px D^{\tilde{\al}-\beta}\om+D^{\beta}v\py D^{\tilde{\al}-\beta}\om\right)\\
&-n\sum_{0<\beta\leq\tilde{\al}}\binom{\tilde{\al}}{\beta}
D^{\beta}\om^{n-1}\py^2 D^{\tilde{\al}-\beta}\om
-n(n-1)\sum_{0\leq\beta\leq\tilde{\al}}\binom{\tilde{\al}}{\beta}
D^{\beta}\om^{n-2}D^{\tilde{\al}-\beta}|\py\om|^2.
\end{split}
\end{equation}
By using \eqref{L1}, it is easy to get 
\begin{equation}\label{K147}
\begin{split}
&\Big|\int_{\T}\pt D^{\tilde{\al}}\om D^{\al}\om|_{y=0}dx\Big|\\
\leq&\|\py\pt D^{\tilde{\al}}\om\|_{L^2}\|D^{\al}\om\|_{L^2}
+\|\pt D^{\tilde{\al}}\om\|_{L^2}\|\py D^{\al}\om\|_{L^2}\\
\leq&\ep\|\py\omega\|_{\mH^{s, \gamma}}^2
+C\ep^{-1}\|\omega\|_{\mH^{s, \gamma}}^2.
\end{split}
\end{equation}
Since the boundary condition $\eqref{feiniu2}_4$, we have at $y=0$
\begin{equation*}
u\px D^{\tilde{\al}}\om +v\py D^{\tilde{\al}}\om=0.
\end{equation*}
Next, using \eqref{L1} yields
\begin{equation}\label{K149}
\begin{split}
&\Big|\int_{\T}D^{\beta}u\px D^{\tilde{\al}-\beta}\om D^{\al}\om|_{y=0}dx\Big|\\
\leq&\|\py D^{\beta}u\px D^{\tilde{\al}-\beta}\om
+D^{\beta}u\py\px D^{\tilde{\al}-\beta}\om\|_{L^2}\|D^{\al}\om\|_{L^2}
+\|D^{\beta}u\px D^{\tilde{\al}-\beta}\om\|_{L^2}\|\py D^{\al}\om\|_{L^2}.
\end{split}
\end{equation}
Noticing that $\om=\py u$ (or $u=\py^{-1}\om$), we have by \eqref{L2}
\begin{equation}\label{K1410}
\begin{split}
\|\py D^{\beta}u\px D^{\tilde{\al}-\beta}\om
\|_{L^2}
=\|D^{\beta}\om D^{\tilde{\al}-\beta+e_2}\om
\|_{L^2}
\leq C \|\omega\|_{\mH^{s, 0}}^2.
\end{split}
\end{equation}
For $D^{\beta}u\py\px D^{\tilde{\al}-\beta}\om$, if $\beta_3\neq0$, we similarly have
\begin{equation}\label{K1411}
\begin{split}
\|D^{\beta}u\py\px D^{\tilde{\al}-\beta}\om\|_{L^2}
=\|D^{\beta-e_3}\om D^{\tilde{\al}-\beta+e_2}\py\om\|_{L^2}
\leq C \|\omega\|_{\mH^{s-1, 0}}\|\py\omega\|_{\mH^{s-1, 0}}.
\end{split}
\end{equation}
On the other hand, if $\beta_3=0$, then by \eqref{L4.5}, it follows 
\begin{equation}\label{K1412}
\begin{split}
\|D^{\beta}u\py\px D^{\tilde{\al}-\beta}\om\|_{L^2}
=\|D^{\beta}\py^{-1}\om D^{\tilde{\al}-\beta+e_2}\py\om\|_{L^2}
\leq C \|\omega\|_{\mH^{s, 0}}\|\py\omega\|_{\mH^{s, 1}}.
\end{split}
\end{equation}
Combining \eqref{K1411} and \eqref{K1412}, we get
\begin{equation}\label{K1413}
\begin{split}
\|D^{\beta}u\py\px D^{\tilde{\al}-\beta}\om\|_{L^2}
\leq C \|\omega\|_{\mH^{s, \ga}}\|\py\omega\|_{\mH^{s, \ga}}.
\end{split}
\end{equation}
Likely \eqref{K1413}, we can obtain
\begin{equation}\label{K1414}
\begin{split}
\|D^{\beta}u\px D^{\tilde{\al}-\beta}\om\|_{L^2}
\leq C \|\omega\|_{\mH^{s, \ga}}^2.
\end{split}
\end{equation}
Substituting the above inequalities into \eqref{K149} and then applying Cauchy's inequality, we can obtain
\begin{equation}\label{K1415}
\begin{split}
\Big|\int_{\T}D^{\beta}u\px D^{\tilde{\al}-\beta}\om D^{\al}\om|_{y=0}dx\Big|\leq\ep\|\py\omega\|_{\mH^{s, \gamma}}^2
+C\ep^{-1}\|\omega\|_{\mH^{s, \gamma}}^2(1+\|\omega\|_{\mH^{s, \gamma}}^2).
\end{split}
\end{equation}
Now, by using \eqref{L1}, we have
\begin{equation}\label{K1416}
\begin{split}
&\Big|\int_{\T}D^{\beta}v\py D^{\tilde{\al}-\beta}\om D^{\al}\om|_{y=0}dx\Big|\\
\leq&\|\py D^{\beta}v\py D^{\tilde{\al}-\beta}\om
+D^{\beta}v\py^2 D^{\tilde{\al}-\beta}\om\|_{L^2}\|D^{\al}\om\|_{L^2}
+\|D^{\beta}v\py D^{\tilde{\al}-\beta}\om\|_{L^2}\|\py D^{\al}\om\|_{L^2}.
\end{split}
\end{equation}
Next, using a similar estimation method as for $K_3$, we obtain 
\begin{equation}\label{K1423}
\begin{split}
\Big|\int_{\T}D^{\beta}v\py D^{\tilde{\al}-\beta}\om D^{\al}\om|_{y=0}dx\Big|\leq\ep\|\py\omega\|_{\mH^{s, \gamma}}^2
+C\ep^{-1}(\|u-U\|_{\mH^{s, \ga-1}}^2 +\|\om\|_{\mH^{s, \ga}}^2+
M^2)^2.
\end{split}
\end{equation}
For simplicity, we omit the details. And similarly, analogous to $K_4$ and $K_5$, we obtain
\begin{equation}\label{K1435}
\begin{split}
\Big|\int_{\T}D^{\beta}\om^{n-1}\py^2 D^{\tilde{\al}-\beta}\om D^{\al}\om|_{y=0}dx\Big|\leq\ep\|\py\omega\|_{\mH^{s, \gamma}}^2
+C\ep^{-1}(1+\|\omega\|_{\mH^{s, \gamma}})^{2s},
\end{split}
\end{equation}
\begin{equation}\label{K1442}
\begin{split}
\Big|\int_{\T}D^{\beta}\om^{n-2}D^{\tilde{\al}-\beta}|\py\om|^2 D^{\al}\om|_{y=0}dx\Big|\leq\ep\|\py\omega\|_{\mH^{s, \gamma}}^2
+C\ep^{-1}(1+\|\omega\|_{\mH^{s, \gamma}})^{2s+2}.
\end{split}
\end{equation}
Thus far, we have proved \eqref{K140}.

\hfill $\square$

\subsection{Estimates Only in Tangential Derivatives}\label{zuoqiexiangguji}
In this subsection, we will deal with the estimates for $D^{\al}\om$ with $\al_1+\al_2=s$.

Since $\om=\py u$, the system $\eqref{feiniu2}_1$ can be written as the following
\begin{equation}\label{feiniu3}
\pt u +u\px u +v\py u-n\om^{n-1}\py^2 u+\px p=0.
\end{equation}
Then by using Bernoulli's law \eqref{Bernoulli}, we have
\begin{equation}\label{feiniu4}
\pt (u-U) +u\px (u-U) +v\py (u-U)-n\om^{n-1}\py^2 (u-U)+(u-U)\px U=0.
\end{equation}

From now on, for ease of expression, if $\al_{3}=0$, we will use $\ptau^{\al}$ to denote $D^{\al}$.

Applying the operator $\p_{\tau}^{\al}$ with $|\al|=s$ to \eqref{feiniu4}, it yields
\begin{equation}\label{ptauu}
\begin{split}
&\pt \p_{\tau}^{\al}(u-U) +u\px \p_{\tau}^{\al}(u-U) +v\py \p_{\tau}^{\al}(u-U)-n\om^{n-1}\py^2 \p_{\tau}^{\al}(u-U)
+\p_{\tau}^{\al}v\om\\
=
&-\sum_{0<\beta\leq\al}\binom{\al}{\beta}
\p_{\tau}^{\beta}u\px \p_{\tau}^{\al-\beta}(u-U)
-\sum_{0<\beta<\al}\binom{\al}{\beta}
\p_{\tau}^{\beta}v\py \p_{\tau}^{\al-\beta}(u-U)\\
&+n\sum_{0<\beta\leq\al}\binom{\al}{\beta}
\p_{\tau}^{\beta}\om^{n-1}\py^2 \p_{\tau}^{\al-\beta}(u-U)
-\sum_{0\leq\beta\leq\al}\binom{\al}{\beta}
\p_{\tau}^{\beta}(u-U)\px\p_{\tau}^{\al-\beta}U.
\end{split}
\end{equation}
On the other hand, we have
\begin{equation}\label{ptauwodu}
\begin{split}
&\pt \p_{\tau}^{\al}\om +u\px \p_{\tau}^{\al}\om +v\py \p_{\tau}^{\al}\om-n\om^{n-1}\py^2 \p_{\tau}^{\al}\om
+\p_{\tau}^{\al}v\py\om\\
=
&-\sum_{0<\beta\leq\al}\binom{\al}{\beta}
\p_{\tau}^{\beta}u\px \p_{\tau}^{\al-\beta}\om
-\sum_{0<\beta<\al}\binom{\al}{\beta}
\p_{\tau}^{\beta}v\py \p_{\tau}^{\al-\beta}\om\\
&+n\sum_{0<\beta\leq\al}\binom{\al}{\beta}
\p_{\tau}^{\beta}\om^{n-1}\py^2 \p_{\tau}^{\al-\beta}\om
+n(n-1)\sum_{0\leq\beta\leq\al}\binom{\al}{\beta}
\p_{\tau}^{\beta}\om^{n-2}\p_{\tau}^{\al-\beta}|\py\om|^2.
\end{split}
\end{equation}
Subtracting $\frac{\py\om}{\om}\times\eqref{ptauu}$ from \eqref{ptauwodu} yields
\begin{equation}\label{ptaug}
\begin{split}
&\pt g_{\al} +u\px g_{\al} +v\py g_{\al}-n\om^{n-1}\py^2 g_{\al}
\\
=&-\sum_{0<\beta<\al}\binom{\al}{\beta}
\p_{\tau}^{\beta}u g_{\al-\beta+e_2}
-\sum_{0<\beta<\al}\binom{\al}{\beta}
\p_{\tau}^{\beta}v(\py \p_{\tau}^{\al-\beta}\om-a\p_{\tau}^{\al-\beta}\om)\\
&+n\sum_{0<\beta\leq\al}\binom{\al}{\beta}
\p_{\tau}^{\beta}\om^{n-1}(\py^2 \p_{\tau}^{\al-\beta}\om
-a\py \p_{\tau}^{\al-\beta}\om)\\
&+n(n-1)\sum_{0\leq\beta\leq\al}\binom{\al}{\beta}
\p_{\tau}^{\beta}\om^{n-2}\p_{\tau}^{\al-\beta}|\py\om|^2
+\sum_{0\leq\beta<\al}\binom{\al}{\beta}
a\p_{\tau}^{\beta}(u-U)\px\p_{\tau}^{\al-\beta}U\\
&-\ptau^{\al}(u-U)\{2n(n-1)a\om^{n-2}\py^2\om+na\py^2\om^{n-1}
-n(n-1)a^3\om^{n-1}\}\\
&+2n\om^{n-1}\py a g_{\al}-g_{e_2}\ptau U.
\end{split}
\end{equation}
where we set $g_{\al}:=\ptau^{\al}\om-a\ptau^{\al}(u-U)$ for any $\al=(\al_1,\al_2,0)$ and $a:=\frac{\py\om}{\om}$.
The derivation of \eqref{ptaug} will be provided later. Now, we are going to prove the following estimate for $g_{\al}$.
\begin{Proposition}[Weighted estimates for $g_{\al}$ with $\al_1+\al_2=s$ and $\al_3=0$]\label{guji2}
Assume $\frac{1}{3}<n<1$. Let $s\geq4,~\ga\geq1,~\ga+\frac{1}{2}<\si
\leq\min\{\frac{1}{1-n},\ga+\frac{2}{3}\}$ and
$\de\in(0,1)$ is small enough. If $(u,v,\om)$ is a classical solution of \eqref{wodu} in $[0,T]$ and satisfies
\begin{equation}\label{jiashe22}
\om\in L^{\ty}(0,T;\mH^{s,\gamma}_{\si,\de}),~\py\om\in L^{2}(0,T;\mH^{s,\gamma}),
\end{equation}
then there exists a positive constant C, which depends on $n,~s,~\ga,~\si$ and $\de$ such that for any small $0<\ep<1$,
\begin{equation}\label{faguji}
\begin{split}
&\sum_{\al_1+\al_2=s}
\left(\frac{d}{dt}\|g_{\al}\|^2_{L^2_{\ga}}
+ n\de^{1-n}\|\py g_{\al}\|^2_{L^2_{\ga}}
\right)\\
\leq &C\ep(\|\py\omega\|_{\mH^{s, \gamma}}^2
+\sum_{\al_1+\al_2=s}\|\py g_{\al}\|^2_{L^2_{\ga}})+C\ep^{-1}(1+M+\|u-U\|_{\mH^{s, \ga-1}}\\
&\quad\quad\quad\quad\quad\quad\quad\quad\quad
\quad\quad\quad\quad\quad\quad\quad\quad\quad
+\|\omega\|_{\mH^{s, \gamma}}+\sum_{\al_1+\al_2=s}\|g_{\al}\|^2_{L^2_{\ga}})^{2s+2}.
\end{split}
\end{equation}
\end{Proposition}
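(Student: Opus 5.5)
The plan is to take the $L^2$ inner product of the $g_\al$-equation \eqref{ptaug} with $(1+y)^{2\ga}g_\al$ and estimate each term as in the proof of Proposition \ref{guji1}. On the left-hand side, the transport terms $u\px g_\al+v\py g_\al$ produce, after integration by parts, only $\ga\int(1+y)^{2\ga-1}v|g_\al|^2$. This is bounded by $C(\|u-U\|_{\mH^{s,\ga-1}}+M)\|g_\al\|_{L^2_\ga}^2$ using \eqref{vdeguji}.

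The diffusion term is handled exactly like $K_1$. Integrating by parts, \eqref{K113} supplies the dissipation $n\de^{1-n}\|\py g_\al\|^2_{L^2_\ga}$. The commutator terms carrying $\om^{n-2}\py\om$ and $(1+y)^{-1}\om^{n-1}$ are absorbed via \eqref{K124}, \eqref{K142} and Cauchy's inequality, giving $\ep\|\py g_\al\|^2_{L^2_\ga}+C\ep^{-1}\|g_\al\|^2_{L^2_\ga}$.

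For the boundary term $-n\int_\T(\om^{n-1}g_\al\py g_\al)|_{y=0}dx$ there is a simplification. Since $u=U$ fails at $y=0$ but $u|_{y=0}=0$, we have $\ptau^\al(u-U)|_{y=0}=-\ptau^\al U$. Evaluating the equation at $y=0$ and using $n\om^{n-1}\py\om|_{y=0}=\px p$ gives $\py g_\al|_{y=0}$ in terms of tangential derivatives of $p,U$ and traces of lower-order quantities. Concretely, applying $\ptau^\al$ to the boundary condition and expanding yields $n\om^{n-1}\py\ptau^\al\om=\ptau^\al\px p-\sum(\cdots)$, and the $a\,\py\ptau^\al(u-U)=a\ptau^\al\om$ part combines to the same structure. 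I then bound the trace with \eqref{L1.5}:
\[
\|g_\al|_{y=0}\|_{L^2(\T)}\le \sqrt2\|g_\al\|^{1/2}\|\py g_\al\|^{1/2},
\]
and the remaining traces by $M$ and $\|\om\|_{\mH^{s,\ga}}$ through \eqref{L1}. This leads to $\ep(\|\py g_\al\|^2_{L^2_\ga}+\|\py\om\|^2_{\mH^{s,\ga}})+C\ep^{-1}(\cdots)^{2s+2}$.

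On the right-hand side of \eqref{ptaug}, every term except $-\ptau^\al v\,\py\om$ has been cancelled. This cancellation is the whole point of $g_\al$. The term $\sum\ptau^\beta u\, g_{\al-\beta+e_2}$ is fine because $|\al-\beta+e_2|\le s$ tangentially with $\beta>0$. For $\beta=e_2$ it is just $\px u\,g_\al$, and otherwise $g_{\al-\beta+e_2}$ has fewer than $s$ tangential derivatives and is controlled by $\|\om\|_{\mH^{s,\ga}}$ and $\|u-U\|_{\mH^{s,\ga-1}}$, using $|a|\le C(1+y)^{-1}$ from $\om\in\mH^{s,\ga}_{\si,\de}$. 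The $\ptau^\beta v$ terms are estimated as in $K_3$ via \eqref{dbvdeguji}, \eqref{dbvdeguji2} and \eqref{L4}. The terms with $\ptau^\beta\om^{n-1}$ and $\ptau^\beta\om^{n-2}$ are treated exactly as $K_4$ and $K_5$: Lemma \ref{faa} together with the decay of Proposition \ref{shuaijian}/Remark \ref{shuaijian4} yields the exponents $c_r,d_r\le0$ thanks to $\si\le\min\{\frac1{1-n},\ga+\frac23\}$. The extra factor $a\sim(1+y)^{-1}$ only improves the weights.

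The genuinely new terms are the coefficient of $\ptau^\al(u-U)$ and $2n\om^{n-1}\py a\,g_\al$. For these I need pointwise bounds $|a\om^{n-2}\py^2\om|+|a\py^2\om^{n-1}|+|a^3\om^{n-1}|\le C(1+y)^{-\si(n-1)-3}\le C(1+y)^{-2}$, using $|\py^k\om|\le\de^{-1}(1+y)^{-\si-k}$ and \eqref{henyouyong}. Then $\|(1+y)^{\ga}(1+y)^{-2}\ptau^\al(u-U)\|\le\|u-U\|_{\mH^{s,\ga-1}}$, and $|\om^{n-1}\py a|\le C$. The $U$-terms are bounded by $M$ times norms of $u-U$ and $\om$.

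Finally I sum over $\al_1+\al_2=s$, use \eqref{chabuduo} to convert norms where needed, and collect all polynomial bounds into the power $2s+2$. I also need to derive \eqref{ptaug} itself. That is a direct computation: multiply \eqref{ptauu} by $a$, subtract it from \eqref{ptauwodu}, and commute $a$ through $\pt+u\px+v\py-n\om^{n-1}\py^2$ using the equation for $\om$. Computing $(\pt+u\px+v\py-n\om^{n-1}\py^2)a$ produces the bracketed coefficient and the $2n\om^{n-1}\py a\,g_\al$ term.

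I expect the main obstacle to be the boundary term. It requires verifying that $\py g_\al|_{y=0}$ has no loss of derivatives and that the $|\ptau^\al\om|$ contributions at the boundary cancel correctly against $a\ptau^\al\om$. If this cancellation fails, I would instead bound the trace of $\py g_\al$ through the equation, as in Case 2 of \eqref{K140}.
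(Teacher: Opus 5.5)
Your proposal is correct and follows essentially the paper's proof: the same energy identity with the terms $S_1$--$S_{10}$, the same reuse of the $K_1$, $K_3$, $K_4$, $K_5$ estimates and of the pointwise decay bounds for the new coefficients, and the same boundary treatment. That treatment substitutes the differentiated boundary condition \eqref{nandian} for $\om^{n-1}\ptau^{\al}\py\om|_{y=0}$ and bounds the remaining traces with \eqref{L1.5}.

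There are a few harmless slips. It is precisely $\ptau^{\al}v\,\py\om$ that $g_{\al}$ cancels, not every other term; you wrote it the other way round. For $\beta=e_1$ the index $\al-\beta+e_2$ still carries $s$ tangential derivatives, but the bound \eqref{ggg} covers that case anyway. Finally, no boundary cancellation between the $\ptau^{\al}\om|_{y=0}$ contributions and $a\ptau^{\al}\om$ is needed, since those traces are simply controlled by \eqref{L1.5}, as in \eqref{jiandanbufen}.
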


\textbf{Proof of Proposition \ref{guji2}.}
Taking $L^2$ inner product of \eqref{ptaug} with $(1+y)^{2\ga}g_{\al}$ where $\al_1+\al_2=s$ and $\al_3=0$ yields
\begin{equation}\label{haoweixian}
\begin{split}
&\frac{1}{2}\frac{d}{dt}\|(1+y)^{\ga}g_{\al}\|_{L^2}^2\\
=&n\int_{\Om}(1+y)^{2\ga}\om^{n-1}g_{\al}\py^2 g_{\al} dxdy-\int_{\Om}(1+y)^{2\ga}g_{\al}(u\px g_{\al} +v\py g_{\al}) dxdy
\\
&-\sum_{0<\beta<\al}\binom{\al}{\beta}\int_{\Om}(1+y)^{2\ga}g_{\al}
\p_{\tau}^{\beta}u g_{\al-\beta+e_2} dxdy\\
&-\sum_{0<\beta<\al}\binom{\al}{\beta}\int_{\Om}(1+y)^{2\ga}g_{\al}
\p_{\tau}^{\beta}v(\py \p_{\tau}^{\al-\beta}\om-a\p_{\tau}^{\al-\beta}\om) dxdy\\
&+n\sum_{0<\beta\leq\al}\binom{\al}{\beta}
\int_{\Om}(1+y)^{2\ga}g_{\al}
\p_{\tau}^{\beta}\om^{n-1}(\py^2 \p_{\tau}^{\al-\beta}\om
-a\py \p_{\tau}^{\al-\beta}\om) dxdy\\
&+n(n-1)\sum_{0\leq\beta\leq\al}\binom{\al}{\beta}
\int_{\Om}(1+y)^{2\ga}g_{\al}
\p_{\tau}^{\beta}\om^{n-2}\p_{\tau}^{\al-\beta}|\py\om|^2 dxdy\\
&+\sum_{0\leq\beta<\al}\binom{\al}{\beta}
\int_{\Om}(1+y)^{2\ga}g_{\al}
a\p_{\tau}^{\beta}(u-U)\px\p_{\tau}^{\al-\beta}U dxdy\\
&-\int_{\Om}(1+y)^{2\ga}g_{\al}
\ptau^{\al}(u-U)\{2n(n-1)a\om^{n-2}\py^2\om+na\py^2\om^{n-1}
-n(n-1)a^3\om^{n-1}\} dxdy\\
&+2n\int_{\Om}(1+y)^{2\ga}\om^{n-1}\py a |g_{\al}|^2 dxdy- \int_{\Om}(1+y)^{2\ga}g_{\al}g_{e_2}\ptau^{\al} U dxdy\\
:=&\sum\limits_{i=1}^{10} S_i.
\end{split}
\end{equation}

For $S_1$, by integration by parts, we have
\begin{equation}\label{S1}
\begin{split}
S_1=&-n\int_{\Om}(1+y)^{2\ga}\om^{n-1}|\py g_{\al}|^2 dxdy\\
&-n(n-1)\int_{\Om}(1+y)^{2\ga}\py\om\om^{n-2}g_{\al}\py g_{\al} dxdy\\
&-2n\ga\int_{\Om}(1+y)^{2\ga-1}\om^{n-1}g_{\al}\py g_{\al} dxdy\\
&-n\int_{\T}(\om^{n-1} g_{\al}\py g_{\al} )|_{y=0}dx\\
:=&\sum\limits_{i=1}^{4} S_1^i.
\end{split}
\end{equation}
For $S_1^i,~i=1,2$ and $3$, similar to $K_1^i$ in Proposition \ref{guji1}, we can deduce
\begin{equation}\label{S11}
\begin{split}
&S_1^1+S_1^2+S_1^3\\
\leq &-n\de^{1-n}\|(1+y)^{\ga}\py g_{\al}\|^2_{L^2}+C\ep\|(1+y)^{\ga}\py g_{\al}\|^2_{L^2}
+C\ep^{-1}\|(1+y)^{\ga} g_{\al}\|^2_{L^2}.
\end{split}
\end{equation}
For $S_1^4$, we have 
\begin{equation}\label{S14}
\begin{split}
|S_1^4|\leq \ep\|\py\omega\|_{\mH^{s, \gamma}}^2
+C\ep^{-1}(1+M+\|\omega\|_{\mH^{s, \gamma}})^{2s}
\end{split}
\end{equation}
The proof of \eqref{S14} will be provided later. Hence, we have the following estimate
\begin{equation}\label{S1dekongzhi}
\begin{split}
S_1 \leq &-n\de^{1-n}\|(1+y)^{\ga}\py g_{\al}\|^2_{L^2}+C\ep(\|(1+y)^{\ga}\py g_{\al}\|^2_{L^2}+
\|\py\omega\|_{\mH^{s, \gamma}}^2)\\
&+C\ep^{-1}\|(1+y)^{\ga} g_{\al}\|^2_{L^2}
+C\ep^{-1}(1+M+\|\omega\|_{\mH^{s, \gamma}})^{2s}.
\end{split}
\end{equation}

For $S_2$, by integration by parts, we have
\begin{equation}\label{S2dekongzhi}
\begin{split}
|S_2|=&\ga|\int_{\Om}(1+y)^{2\ga-1}v |g_{\al}|^2 dxdy|\\
\leq&\ga \|(1+y)^{-1}v\|_{L^{\ty}}
\|(1+y)^{\ga}g_{\al}\|_{L^2}^2\\
\leq& C(\|u-U\|_{\mH^{s, \gamma-1}}+M)\|(1+y)^{\ga}g_{\al}\|_{L^2}^2,
\end{split}
\end{equation}
where we use \eqref{vdeguji}.

For $S_3$, 
we obtain by using Sobolev embedding inequality, and \eqref{L5}
\begin{equation}\label{aiyouwei}
\begin{split}
&\Big|\int_{\Om}(1+y)^{2\ga}g_{\al}
\p_{\tau}^{\beta}u g_{\al-\beta+e_2} dxdy\Big|\\
\leq&\|(1+y)^{\ga}g_{\al}\|_{L^2}
\|(1+y)^{\ga}\p_{\tau}^{\beta}u g_{\al-\beta+e_2}\|_{L^2}
\\
\leq&\|(1+y)^{\ga}g_{\al}\|_{L^2}
\|\py^{-1}\p_{\tau}^{\beta}\om\|_{L^{\ty}}
\|(1+y)^{\ga} g_{\al-\beta+e_2}\|_{L^2}\\
\leq&C \|(1+y)^{\ga}g_{\al}\|_{L^2}
\|\om\|_{\mH^{|\beta|+1, 1}}
\|(1+y)^{\ga} g_{\al-\beta+e_2}\|_{L^2}
\\
\leq&C \|(1+y)^{\ga}g_{\al}\|_{L^2}
\|\om\|_{\mH^{s, \ga}}
\|(1+y)^{\ga} g_{\al-\beta+e_2}\|_{L^2},
\end{split}
\end{equation}
where we use $u=\py^{-1}\om$. On the other hand, we have 
\begin{equation}\label{aaa}
\begin{split}
|a|=|\frac{\py\om}{\om}|
\leq\de^{-2}(1+y)^{-1}.
\end{split}
\end{equation}
Since $g_{\al-\beta+e_2}:=
\ptau^{\al-\beta+e_2}\om-a\ptau^{\al-\beta+e_2}(u-U)$, we deduce
\begin{equation}\label{ggg}
\begin{split}
&\|(1+y)^{\ga} g_{\al-\beta+e_2}\|_{L^2}\\
=&\|(1+y)^{\ga}(\ptau^{\al-\beta+e_2}\om-
a\ptau^{\al-\beta+e_2}(u-U))\|_{L^2}\\
\leq& \|(1+y)^{\ga}\ptau^{\al-\beta+e_2}\om\|_{L^2}
+\|(1+y)a\|_{L^{\ty}}\|(1+y)^{\ga-1}
\ptau^{\al-\beta+e_2}(u-U)\|_{L^2}\\
\leq& C (\|\om\|_{\mH^{s, \ga}}+\|u-U\|_{\mH^{s, \ga-1}}).
\end{split}
\end{equation}
Combining \eqref{aiyouwei} and \eqref{ggg} gives
\begin{equation}\label{S3dekongzhi}
\begin{split}
|S_3|\leq C \|(1+y)^{\ga}g_{\al}\|_{L^2}
\|\om\|_{\mH^{s, \ga}}(\|\om\|_{\mH^{s, \ga}}+\|u-U\|_{\mH^{s, \ga-1}}).
\end{split}
\end{equation}

For $S_4$, 
by using H\"{o}lder's inequality and \eqref{dbvdeguji}, we obtain for $|\beta|=s-1$
\begin{equation*}
\begin{split}
&\Big|
\int_{\Om}(1+y)^{2\ga}g_{\al}
\p_{\tau}^{\beta}v(\py \p_{\tau}^{\al-\beta}\om-a\p_{\tau}^{\al-\beta}\om) dxdy\Big|
\\
\leq &C \|(1+y)^{\ga}g_{\al}\|_{L^2}
\|(1+y)^{-1}\p_{\tau}^{\beta}v\|_{L^2_x L^{\ty}_y}\\
&~~~\cdot(
\|(1+y)^{\ga+1}\py \p_{\tau}^{\al-\beta}\om\|_{L^{\ty}_x L^2_y}
+\|(1+y)a\|_{L^{\ty}}
\|(1+y)^{\ga}\p_{\tau}^{\al-\beta}\om\|_{L^{\ty}_x L^2_y})\\
\leq &C \|(1+y)^{\ga}g_{\al}\|_{L^2}(\|u-U\|_{\mH^{s, \ga-1}}+\|\om\|_{\mH^{s, \ga}}+M)\\
&~~~\cdot(
\|(1+y)^{\ga+1}\py \p_{\tau}^{\al-\beta}\om\|_{H^1}
+\|(1+y)a\|_{L^{\ty}}
\|(1+y)^{\ga}\p_{\tau}^{\al-\beta}\om\|_{H^1})\\
\leq &C \|(1+y)^{\ga}g_{\al}\|_{L^2}(\|u-U\|_{\mH^{s, \ga-1}}+\|\om\|_{\mH^{s, \ga}}+M)\|\om\|_{\mH^{s, \ga}}.
\end{split}
\end{equation*}
For the case $0<|\beta|\leq s-2$, using \eqref{dbvdeguji2} gives
\begin{equation*}
\begin{split}
&\Big|
\int_{\Om}(1+y)^{2\ga}g_{\al}
\p_{\tau}^{\beta}v(\py \p_{\tau}^{\al-\beta}\om-a\p_{\tau}^{\al-\beta}\om) dxdy\Big|
\\
\leq &C \|(1+y)^{\ga}g_{\al}\|_{L^2}
\|(1+y)^{-1}\p_{\tau}^{\beta}v\|_{L^{\ty}}\\
&~~~\cdot(
\|(1+y)^{\ga+1}\py \p_{\tau}^{\al-\beta}\om\|_{L^2}
+\|(1+y)a\|_{L^{\ty}}
\|(1+y)^{\ga}\p_{\tau}^{\al-\beta}\om\|_{L^2})\\
\leq &C \|(1+y)^{\ga}g_{\al}\|_{L^2}(\|u-U\|_{\mH^{s, \ga-1}}+\|\om\|_{\mH^{s, \ga}}+M)\|\om\|_{\mH^{s, \ga}}.
\end{split}
\end{equation*}
Thus, we have an estimate for $S_4$
\begin{equation}\label{S4dekongzhi}
\begin{split}
|S_4|
\leq C \|(1+y)^{\ga}g_{\al}\|_{L^2}(\|u-U\|_{\mH^{s, \ga-1}}+\|\om\|_{\mH^{s, \ga}}+M)\|\om\|_{\mH^{s, \ga}}.
\end{split}
\end{equation}

For $S_5$ and $S_6$, similar to $K_4$ and $K_5$ in Proposition \ref{guji1}, we can deduce by using \eqref{aaa}
\begin{equation}\label{S5deguji}
\begin{split}
|S_5|\leq \ep\|\py\omega\|_{\mH^{s, \gamma}}^2
+C\ep^{-1}(1+\|\omega\|_{\mH^{s, \gamma}}+\|(1+y)^{\ga}g_{\al}\|_{L^2})^{s},
\end{split}
\end{equation}
and
\begin{equation}\label{S6deguji}
\begin{split}
|S_6|\leq \ep\|\py\omega\|_{\mH^{s, \gamma}}^2
+C\ep^{-1}(1+\|\omega\|_{\mH^{s, \gamma}}+\|(1+y)^{\ga}g_{\al}\|_{L^2})^{s+1}.
\end{split}
\end{equation}

For $S_7$, we have
\begin{equation}\label{kuaiwanle}
\begin{split}
&\Big|\int_{\Om}(1+y)^{2\ga}g_{\al}
a\p_{\tau}^{\beta}(u-U)\px\p_{\tau}^{\al-\beta}U dxdy\Big|\\
\leq&\|(1+y)^{\ga}g_{\al}\|_{L^2}
\|(1+y)^{\ga-1}\p_{\tau}^{\beta}(u-U)\|_{L^2}
\|(1+y)a\|_{L^{\ty}}
\|\px\p_{\tau}^{\al-\beta}U\|_{L^{\ty}_x}\\
\leq&C \|(1+y)^{\ga}g_{\al}\|_{L^2}
\|u-U\|_{\mH^{s, \ga-1}}
\|\px\p_{\tau}^{\al-\beta}U\|_{H^1_x}\\
\leq&C M\|(1+y)^{\ga}g_{\al}\|_{L^2}
\|u-U\|_{\mH^{s, \ga-1}}.
\end{split}
\end{equation}
Hence, we can obtain
\begin{equation}\label{S7deguji}
\begin{split}
|S_7|
\leq C M\|(1+y)^{\ga}g_{\al}\|_{L^2}
\|u-U\|_{\mH^{s, \ga-1}}.
\end{split}
\end{equation}

For $S_8$, since $\om\in\mH^{s,\gamma}_{\si,\de}$, we have by \eqref{henyouyong}
\begin{equation*}
\begin{split}
|(1+y)a\om^{n-2}\py^2\om|=
&|(1+y)\om^{n-3}\py\om\py^2\om|\\
\leq
&\de^{n-4}|(1+y)^{1+\si(3-n)-\si-1-\si-2}|\\
\leq&\de^{n-4}|(1+y)^{-(n-1)\si-2}|\\
\leq &C.
\end{split}
\end{equation*}
Similarly, we can prove that
\begin{equation*}
\begin{split}
|(1+y)a\py^2\om^{n-1}|\leq C,\quad|(1+y)a^3\om^{n-1}|
\leq C.
\end{split}
\end{equation*}
Hence, by using H\"{o}lder's inequality, it gives
\begin{equation}\label{S8deguji}
\begin{split}
|S_8|
\leq C \|(1+y)^{\ga}g_{\al}\|_{L^2}
\|u-U\|_{\mH^{s, \ga-1}}.
\end{split}
\end{equation}

For $S_9$, it is easy to get
\begin{equation*}
\begin{split}
|\om^{n-1}\py a|\leq C (1+y)^{\si(1-n)-2}\leq C (1+y)^{-1}\leq C.
\end{split}
\end{equation*}
Then there holds
\begin{equation}\label{S9deguji}
\begin{split}
|S_9|
\leq C \|(1+y)^{\ga}g_{\al}\|_{L^2}^2.
\end{split}
\end{equation}

For $S_{10}$, it is easy to obtain by \eqref{ggg}
\begin{equation}\label{S10deguji}
\begin{split}
|S_{10}|
\leq &C \|(1+y)^{\ga}g_{\al}\|_{L^2}
\|(1+y)^{\ga}g_{e_2}\|_{L^2}
\|\ptau^{\al}U\|_{L^{\ty}_x}\\
\leq &C M \|(1+y)^{\ga}g_{\al}\|_{L^2}
(\|\om\|_{\mH^{s, \ga}}+\|u-U\|_{\mH^{s, \ga-1}}).
\end{split}
\end{equation}

Substituting all estimates of $S_i$ into \eqref{haoweixian} and summing over $\al$ , we can establish that \eqref{faguji} holds.

\hfill $\square$

$Proof~of~\eqref{ptaug}$: 
Applying $\py$ to \eqref{wodu}, we obtain
\begin{equation}\label{pywodu}
(\pt +u\px +v\py-n\om^{n-1}\py^2) \py\om
=-\om\px\om+\px u\py\om+
2n\py\om^{n-1}\py^2\om
+n\py^2\om^{n-1}\py\om.
\end{equation}
Then by \eqref{wodu} and \eqref{pywodu}, we can compute
\begin{equation}\label{a1}
\begin{split}
&(\pt +u\px +v\py-n\om^{n-1}\py^2) a\\
=&-n\om^{n-1}\py^2 a+
\frac{1}{\om}(\pt +u\px +v\py)\py\om
-\frac{\py\om}{\om^2}(\pt +u\px +v\py)\om\\
=&-n\om^{n-1}\py^2 a
+n\om^{n-2}\py^3\om-\px\om+a\px u+2n(n-1)a\om^{n-2}\py^2\om+
na\py^2\om^{n-1}\\
&-na\om^{n-2}\py^2\om-n(n-1)a^3\om^{n-1}.
\end{split}
\end{equation}
On the other hand, we can check that
\begin{equation}\label{a2}
\begin{split}
\py^2 a=\frac{\py^3\om}{\om}-a\frac{\py^2\om}{\om}-2a\py a.
\end{split}
\end{equation}
Substituting \eqref{a2} into \eqref{a1}, we get the following equation for $a$:
\begin{equation}\label{a3}
\begin{split}
&(\pt +u\px +v\py-n\om^{n-1}\py^2) a\\
=&
-g_{e_2}+a\px U+2n(n-1)a\om^{n-2}\py^2\om+
na\py^2\om^{n-1}\\
&-n(n-1)a^3\om^{n-1}+2na\py a\om^{n-1},
\end{split}
\end{equation}
where $g_{e_2}:=\ptau^{e_2}\om-a\ptau^{e_2}(u-U)=
\px\om-a\px(u-U)$.

Set $I=(\pt +u\px +v\py-n\om^{n-1}\py^2)$. Then Combining \eqref{ptauu}, \eqref{ptauwodu} and \eqref{a3} yields 
\begin{equation}\label{Ig}
\begin{split}
&I(g_{\al})\\
=&I(\ptau^{\al}\om)-aI(\ptau^{\al}(u-U))
-\ptau^{\al}(u-U)I(a)+2n\om^{n-1}\py a\py\ptau^{\al}(u-U)\\
=&-\sum_{0<\beta<\al}\binom{\al}{\beta}
\p_{\tau}^{\beta}u g_{\al-\beta+e_2}
-\sum_{0<\beta<\al}\binom{\al}{\beta}
\p_{\tau}^{\beta}v(\py \p_{\tau}^{\al-\beta}\om-a\p_{\tau}^{\al-\beta}\om)\\
&+n\sum_{0<\beta\leq\al}\binom{\al}{\beta}
\p_{\tau}^{\beta}\om^{n-1}(\py^2 \p_{\tau}^{\al-\beta}\om
-a\py \p_{\tau}^{\al-\beta}\om)\\
&+n(n-1)\sum_{0\leq\beta\leq\al}\binom{\al}{\beta}
\p_{\tau}^{\beta}\om^{n-2}\p_{\tau}^{\al-\beta}|\py\om|^2
+\sum_{0\leq\beta<\al}\binom{\al}{\beta}
a\p_{\tau}^{\beta}(u-U)\px\p_{\tau}^{\al-\beta}U\\
&-\ptau^{\al}(u-U)\{2n(n-1)a\om^{n-2}\py^2\om+na\py^2\om^{n-1}
-n(n-1)a^3\om^{n-1}\}\\
&+2n\om^{n-1}\py a g_{\al}-g_{e_2}\ptau U.
\end{split}
\end{equation}
Hence \eqref{ptaug} holds.

\hfill $\square$

$Proof~of~\eqref{S14}$: 
By the boundary conditions $\eqref{feiniu}_4$, we have
\begin{equation}\label{gabianjie}
\begin{split}
&(\om^{n-1} g_{\al}\py g_{\al} )|_{y=0}\\
=&
\{(\om^{n-1}(\ptau^{\al}\om+a\ptau^{\al}U)
(\ptau^{\al}\py\om-a\ptau^{\al}\om+\py a\ptau^{\al}U)\}|_{y=0}\\
=&\{\om^{n-1}\ptau^{\al}\om\ptau^{\al}\py\om
+a\om^{n-1}\ptau^{\al}U\ptau^{\al}\py\om
-a\om^{n-1}(\ptau^{\al}\om)^2\\
&-a^2\om^{n-1}\ptau^{\al}U\ptau^{\al}\om
+\py a\om^{n-1}\ptau^{\al}\om\ptau^{\al}U
+a\py a\om^{n-1}(\ptau^{\al}U)^2
\}|_{y=0}.
\end{split}
\end{equation}
First, since $\om\in\mH^{s,\gamma}_{\si,\de}$ for $\om$, it is easy to get there exists a $C_{\de}>0$, such that
\begin{equation}\label{youjie}
\begin{split}
|(a,\py a, \om^{n-1})|_{y=0}\leq C_{\de}.
\end{split}
\end{equation}
Then, using H\"{o}lder's inequality and \eqref{L1.5} yields that
\begin{equation}\label{jiandanbufen}
\begin{split}
&\Big|\int_{\T}\{-a\om^{n-1}(\ptau^{\al}\om)^2
-a^2\om^{n-1}\ptau^{\al}U\ptau^{\al}\om
+\py a\om^{n-1}\ptau^{\al}\om\ptau^{\al}U
+a\py a\om^{n-1}(\ptau^{\al}U)^2
\}|_{y=0}dx\Big|\\
\leq & C (\|\ptau^{\al}\om|_{y=0}\|^2_{L^2_x} + \|\ptau^{\al}U\|^2_{L^2_x})\\
\leq & C (\|\ptau^{\al}\om\|_{L^2}\|\ptau^{\al}\py\om\|_{L^2} + M^2)\\
\leq & \ep\|\py\omega\|_{\mH^{s, \gamma}}^2
+C\ep^{-1}(\|\om\|_{\mH^{s, \ga}}^2+
M^2).
\end{split}
\end{equation}
Next, by the boundary conditions $\eqref{wodu}_3$, we have
\begin{equation}\label{nandian}
\begin{split}
\om^{n-1}\ptau^{\al}\py\om|_{y=0}=
\frac{1}{n}\ptau^{\al}\px p- \frac{1}{n}\sum_{0<\beta\leq\al}\binom{\al}{\beta}
\ptau^{\beta}\om^{n-1}\ptau^{\al-\beta}\py\om|_{y=0}.
\end{split}
\end{equation}
Hence, putting \eqref{nandian} into \eqref{gabianjie} and using \eqref{L1}, similar to \eqref{K1435}, it yields 
\begin{equation}\label{sufu}
\begin{split}
\Big|\int_{\T}(\om^{n-1}\ptau^{\al}\om\ptau^{\al}\py\om
+a\om^{n-1}\ptau^{\al}U\ptau^{\al}\py\om)|_{y=0} dx\Big|\leq\ep\|\py\omega\|_{\mH^{s, \gamma}}^2
+C\ep^{-1}(1+M+\|\omega\|_{\mH^{s, \gamma}})^{2s}.
\end{split}
\end{equation}
Combining \eqref{jiandanbufen} and \eqref{sufu}, it shows  \eqref{S14} holds.

\hfill $\square$

\subsection{Closeness of the A Priori Estimates}\label{jianchi}

In this subsection, we are going to prove Proposition \ref{xianyanguji}. According to Proposition \ref{guji1} and \ref{guji2}, we obtain that from the definition of $\|\om\|_{\mH^{s, \gamma}_g}$
\begin{equation}\label{fb1}
\begin{split}
&\frac{d}{dt}\|\om\|_{\mH^{s, \gamma}_g}^2
+ n\de^{1-n}(\sum_{\substack{|\al|\leq s \\ \al_1+\al_2\leq s-1}}\|\py D^{\al}\om\|^2_{L^2_{\ga+\al_3}}
+ \sum_{\al_1+\al_2=s}\|\py g_{\al}\|^2_{L^2_{\ga}})
\\
\leq &C\ep(\|\py\omega\|_{\mH^{s, \gamma}}^2
+\sum_{\al_1+\al_2=s}\|\py g_{\al}\|^2_{L^2_{\ga}})+C\ep^{-1}(1+M+\|u-U\|_{\mH^{s, \ga-1}}\\
&\quad\quad\quad\quad\quad\quad\quad\quad\quad
\quad\quad\quad\quad\quad\quad\quad\quad\quad
+\|\omega\|_{\mH^{s, \gamma}}+\|\om\|_{\mH^{s, \gamma}_g})^{2s+2}.
\end{split}
\end{equation}
Since $g_{\al}:=\ptau^{\al}\om-a\ptau^{\al}(u-U)$ for any $\al=(\al_1,\al_2,0)$, we have
\begin{equation*}
\begin{split}
\py \ptau^{\al}\om=\py g_{\al}+ \py a \ptau^{\al}(u-U)+ a\ptau^{\al}\om.
\end{split}
\end{equation*}
Then
\begin{equation}\label{fb2}
\begin{split}
&\|(1+y)^{\ga}\py \ptau^{\al}\om\|_{L^2}\\
\leq&
\|(1+y)^{\ga}\py g_{\al}\|_{L^2}
+\|(1+y)^{\ga}\py a \ptau^{\al}(u-U)\|_{L^2}
+\|(1+y)^{\ga} a\ptau^{\al}\om\|_{L^2}\\
\leq&\|(1+y)^{\ga}\py g_{\al}\|_{L^2}
+\|(1+y)\py a\|_{L^{\ty}}
\|(1+y)^{\ga-1}\py a \ptau^{\al}(u-U)\|_{L^2}\\
&+\|a\|_{L^{\ty}}
\|(1+y)^{\ga} \ptau^{\al}\om\|_{L^2}\\
\leq&\|(1+y)^{\ga}\py g_{\al}\|_{L^2}
+C\|u-U\|_{\mH^{s, \ga-1}}
+C\|\omega\|_{\mH^{s, \gamma}}.
\end{split}
\end{equation}
Substituting \eqref{fb2} into \eqref{fb1}, we deduce
\begin{equation}\label{fb3}
\begin{split}
&\frac{d}{dt}\|\om\|_{\mH^{s, \gamma}_g}^2
+ n\de^{1-n}(\sum_{\substack{|\al|\leq s \\ \al_1+\al_2\leq s-1}}\|\py D^{\al}\om\|^2_{L^2_{\ga+\al_3}}
+ \sum_{\al_1+\al_2=s}\|\py g_{\al}\|^2_{L^2_{\ga}})
\\
\leq &C\ep(\sum_{\substack{|\al|\leq s \\ \al_1+\al_2\leq s-1}}\|\py D^{\al}\om\|^2_{L^2_{\ga+\al_3}}
+ \sum_{\al_1+\al_2=s}\|\py g_{\al}\|^2_{L^2_{\ga}})+C\ep^{-1}(1+M+\|u-U\|_{\mH^{s, \ga-1}}\\
&\quad\quad\quad\quad\quad\quad\quad\quad\quad
\quad\quad\quad\quad\quad\quad\quad\quad\quad
\quad\quad\quad\quad\quad\quad
+\|\omega\|_{\mH^{s, \gamma}}+\|\om\|_{\mH^{s, \gamma}_g})^{2s+2}.
\end{split}
\end{equation}
Hence, by choosing $\ep$ small enough, we get 
\begin{equation}\label{fb4}
\begin{split}
\frac{d}{dt}\|\om\|_{\mH^{s, \gamma}_g}^2
\leq C(1+M+\|u-U\|_{\mH^{s, \ga-1}}
+\|\omega\|_{\mH^{s, \gamma}}+\|\om\|_{\mH^{s, \gamma}_g})^{2s+2}.
\end{split}
\end{equation}
From \eqref{fb4} and \eqref{chabuduo}, we deduce
\begin{equation}\label{fb5}
\begin{split}
\frac{d}{dt}\|\om\|_{\mH^{s, \gamma}_g}^2
\leq& C(1+M+\|\om\|_{\mH^{s, \gamma}_g})^{2s+2}\\
\leq& C\|\om\|_{\mH^{s, \gamma}_g}^{2s+2}+
 C(1+M)^{2s+2}.
\end{split}
\end{equation}
Hence, using the comparison principle of ordinary differential equations gives 
\begin{equation}\label{fb6}
\begin{split}
\|\om\|_{\mH^{s, \gamma}_g}^2
\leq \{F(0)+
C(1+M)^{2s+2}t \}
\cdot\left\{
1-Cs\{F(0)+
C(1+M)^{2s+2}t\}^s t
\right\}^{-\frac{1}{s}},
\end{split}
\end{equation}
where
\begin{equation}\label{F0}
\begin{split}
F(0):=\sum_{\substack{|\al|\leq s \\ \al_1+\al_2\leq s-1}}
\|D^{\al}\om(0)\|^2_{L^2_{\ga+\al_3}}
+\sum_{\al_1+\al_2=s}
\|g_{\al}(0)\|^2_{L^2_{\ga}}.
\end{split}
\end{equation}

Next, by direct calculation, we know that $D^{\al}(\om,u-U)(0,x,y)$ with $|\al|\leq s$ can be expressed by spatial derivatives of initial data $(\om_0,u_0-U|_{t=0},U|_{t=0})$ up to order $2s$, therefore, by \eqref{chabuduo}, we get that there exists a polynomial $\mathcal{P}(\cdot)$, such that
\begin{equation}\label{chuzhiduoxiangshi}
F(0)\leq\mathcal{P}(M,\|u_0-U|_{t=0}\|_{H^{2s,\ga-1}},\|\om_0\|_{H^{2s,\ga}})
\end{equation}
Combining \eqref{fb6} and \eqref{chuzhiduoxiangshi} yields \eqref{zuihou1}.

Next, we also need to estimate $I(t):=\sum_{|\alpha| \leq 2}\left|(1+y)^{\sigma+\alpha_3} D^\alpha \omega\right|^2$ and $(1+y)^{\si}\om$.

We know that
\begin{equation*}
\begin{split}
(1+y)^{\si}\om(\tau)=(1+y)^{\si}\om_0+\int_{0}^{t}(1+y)^{\si}\pt \om(\tau) d\tau.
\end{split}
\end{equation*}
Then by $\om\in \mH^{s,\gamma}_{\si,\de}$, we obtain
\begin{equation}\label{omdebianjie}
\begin{split}
(1+y)^{\si}\om(\tau)\geq&(1+y)^{\si}\om_0-\int_{0}^{t}\|(1+y)^{\si}\pt \om(\tau)\|_{L^{\ty}} d\tau\\
\geq&(1+y)^{\si}\om_0-\int_{0}^{t}\de^{-1} d\tau\\
\geq&(1+y)^{\si}\om_0-\de^{-1}t,
\end{split}
\end{equation}
which shows \eqref{zuihou2}.

Now, for $I(t)$, we define $B_{\al}:=(1+y)^{\sigma+\al_3} D^\alpha \omega$ for $|\al|\leq2$. Then by a direct computation, we deduce that $B_{\al}$ satisfies 
\begin{equation}\label{bbll}
\begin{split}
\pt B_{\al} +u\px B_{\al} +v\py B_{\al}-n\om^{n-1}\py^2 B_{\al}
=
D_{\al}B_{\al}+E_{\al}\py B_{\al}+F_{\al},
\end{split}
\end{equation}
where
\begin{equation*}
\begin{split}
D_{\al}=(\si+\al_3)\frac{v}{1+y}
+n(\si+\al_3)(\si+\al_3+1)\frac{\om^{n-1}}{(1+y)^2},
\quad
E_{\al}=-2n(\si+\al_3)\frac{\om^{n-1}}{1+y},
\end{split}
\end{equation*}
and
\begin{equation*}
\begin{split}
F_{\al}=
&-(1+y)^{\si+\al_3}\sum_{0<\beta\leq\al}\binom{\al}{\beta}
\left(D^{\beta}u\px D^{\al-\beta}\om+D^{\beta}v\py D^{\al-\beta}\om\right)\\
&+n(1+y)^{\si+\al_3}\sum_{0<\beta\leq\al}\binom{\al}{\beta}
D^{\beta}\om^{n-1}\py^2 D^{\al-\beta}\om\\
&+n(n-1)(1+y)^{\si+\al_3}\sum_{0\leq\beta\leq\al}\binom{\al}{\beta}
D^{\beta}\om^{n-2}D^{\al-\beta}|\py\om|^2.
\end{split}
\end{equation*}
Then by \eqref{chabuduo}, \eqref{K121} and \eqref{vdeguji}, we have the following for $|\al|\leq 2$
\begin{equation}\label{DE}
\begin{split}
|D_{\al}|\leq C (1+M+\|\om\|_{\mH^{s, \gamma}_g}),\quad
|E_{\al}|\leq C\frac{\om^{n-1}}{1+y}.
\end{split}
\end{equation}
On the other hand, for $D^{\beta}u$, if $\beta_3>0$, then by \eqref{chabuduo}
\begin{equation*}
\begin{split}
|(1+y)^{\beta_3}D^{\beta}u|=&|(1+y)^{\beta_3}D^{\beta-e_3}\om|\\
\leq &|(1+y)^{\ga+\beta_3-1}D^{\beta-e_3}\om|
\leq C\|\omega\|_{\mH^{s, \gamma}}
\leq C(\|\om\|_{\mH^{s, \gamma}_g}+M).
\end{split}
\end{equation*}
If $|\beta_3|=0$, then by \eqref{L5}
\begin{equation*}
\begin{split}
|D^{\beta}u|=|\py^{-1}D^{\beta}\om|\leq C\|\omega\|_{\mH^{|\beta|+1, 1}}
\leq C(\|\om\|_{\mH^{s, \gamma}_g}+M)
\end{split}
\end{equation*}
Combining the above two cases, we have 
\begin{equation}\label{xixi}
\begin{split}
|(1+y)^{\beta_3}D^{\beta}u|
\leq C(\|\om\|_{\mH^{s, \gamma}_g}+M).
\end{split}
\end{equation}
For $D^{\beta}v$, similarly, we have 
\begin{equation}\label{xixiv}
\begin{split}
|(1+y)^{\beta_3}D^{\beta}v|
\leq C(\|\om\|_{\mH^{s, \gamma}_g}+M).
\end{split}
\end{equation}
Hence 
\begin{equation*}
\begin{split}
(1+y)^{\si+\al_3}|D^{\beta}u\px D^{\al-\beta}\om+D^{\beta}v\py D^{\al-\beta}\om|
\leq C(\|\om\|_{\mH^{s, \gamma}_g}+M)I^{\frac{1}{2}}(t).
\end{split}
\end{equation*}
Next, because of $|\al|\leq 2$, a simple case analysis suffices to prove 
\begin{equation*}
\begin{split}
(1+y)^{\si+\al_3}|D^{\beta}\om^{n-1}\py^2 D^{\al-\beta}\om|
\leq &C(\|\om\|_{\mH^{s, \gamma}_g}+1)I^{\frac{1}{2}}(t),\\
(1+y)^{\si+\al_3}|D^{\beta}\om^{n-2}D^{\al-\beta}|\py\om|^2|
\leq &C(\|\om\|_{\mH^{s, \gamma}_g}+1)I^{\frac{1}{2}}(t).
\end{split}
\end{equation*}
As a result, it gives for $|\al|\leq 2$
\begin{equation}\label{FFF}
\begin{split}
|F_{\al}|\leq C (1+M+\|\om\|_{\mH^{s, \gamma}_g})I^{\frac{1}{2}}(t).
\end{split}
\end{equation}
Noting that $I(t)=\sum_{|\alpha| \leq 2}|B_{\al}|^2$, we can therefore obtain by using \eqref{bbll}
\begin{equation}\label{iiii}
\begin{split}
&\pt I +u\px I +v\py I-n\om^{n-1}\py^2 I\\
=
&2 \sum_{|\alpha| \leq 2}(D_{\al}B_{\al}^2+2E_{\al}B_{\al}\py B_{\al}+2F_{\al}B_{\al})
-2n\om^{n-1}\sum_{|\alpha| \leq 2}|\py B_{\al}|^2.
\end{split}
\end{equation}
Then using \eqref{DE}, \eqref{FFF} and Cauchy's inequality yields
\begin{equation}\label{iiii2222}
\begin{split}
&\pt I +u\px I +v\py I-n\om^{n-1}\py^2 I\\
\leq & C (1+M+\|\om\|_{\mH^{s, \gamma}_g})I
+ C\frac{\om^{n-1}}{1+y}\sum_{|\alpha| \leq 2} B_{\al}\py B_{\al}
-2n\om^{n-1}\sum_{|\alpha| \leq 2}|\py B_{\al}|^2\\
\leq & C (1+M+\|\om\|_{\mH^{s, \gamma}_g})I
+ C\frac{\om^{n-1}}{(1+y)^2}\sum_{|\alpha| \leq 2} |B_{\al}|^2\\
\leq & C (1+M+\|\om\|_{\mH^{s, \gamma}_g})I.
\end{split}
\end{equation}
For the last inequality sign, we make use of $\frac{\om^{n-1}}{(1+y)^2}\leq C_{\de}$.

Using the classical maximum principle for parabolic equations (see Lemma \ref{jida}) to $I(t)$ yields
\begin{equation}\label{jidazhi}
\begin{split}
\|I(t)\|_{L^{\ty}}
\leq \max\left\{
e^{C (1+M+\|\om(t)\|_{\mH^{s, \gamma}_g})t}\|I(0)\|_{L^{\ty}},~
\max\limits_{\tau\in[0,t]}\{e^{C (1+M+\|\om(t)\|_{\mH^{s, \gamma}_g})(t-\tau)}\|I(\tau)|_{y=0}\|_{L^{\ty}(\T)}\}
\right\}.
\end{split}
\end{equation}

In inequalities \eqref{jidazhi}, although we have already controlled $I$ using initial and boundary value, since we do not have information about the boundary values, we next need to estimate it.

To derive \eqref{zuihou3}, by Lemma \ref{chazhi}, we have
\begin{equation}\label{bianjie11}
\begin{split}
\|I|_{y=0}\|_{L^{\ty}(\T)}\leq&
\sum_{|\alpha| \leq 2}\|D^{\al}\om\|_{L^{\ty}}^2\\
\leq
&3\tilde{C}^2\sum_{|\alpha| \leq 2}(\|D^{\al}\om\|_{L^2}^2
+\|\px D^{\al}\om\|_{L^2}^2
+\|\py^2 D^{\al}\om\|_{L^2}^2)\\
\leq&6\tilde{C}^2 \|\om\|_{\mH^{s, \gamma}_g}^2.
\end{split}
\end{equation}

To derive \eqref{zuihou4}, for $s\geq5$, we have by Lemma \ref{chazhi} that 
\begin{equation}\label{bianjie22}
\begin{split}
\|I(t)|_{y=0}\|_{L^{\ty}(\T)}\leq
&\|I(0)|_{y=0}\|_{L^{\ty}(\T)}+\int_{0}^{t}\|\pt I(\tau)|_{y=0}\|_{L^{\ty}(\T)}d\tau\\
\leq&\|I(0)\|_{L^{\ty}}+2\sum_{|\alpha| \leq 2}\int_{0}^{t}\|\pt D^{\al}\om D^\alpha \omega(\tau)\|_{L^{\ty}} d\tau\\
\leq&\|I(0)\|_{L^{\ty}}+2\tilde{C}^2\sum_{|\alpha| \leq 2}\int_{0}^{t}(\|D^{\al}\om\|_{L^2}
+\|\px D^{\al}\om\|_{L^2}
+\|\py^2 D^{\al}\om\|_{L^2})\\
&\quad\quad\quad\quad\quad\quad\quad\quad\quad\quad
\cdot(\|\pt D^{\al}\om\|_{L^2}
+\|\pt \px D^{\al}\om\|_{L^2}
+\|\pt \py^2 D^{\al}\om\|_{L^2}) d\tau\\
\leq&\|I(0)\|_{L^{\ty}}+4\tilde{C}^2\int_{0}^{t}\|\om(\tau)\|_{\mH^{s, \gamma}_g}^2 d\tau,
\end{split}
\end{equation}
which shows \eqref{zuihou4}.

This completes the proof of Proposition \ref{xianyanguji}. With this proposition in hand, one can prove Theorem \ref{zhuding} by applying the standard regularization procedure described in \cite[Sec. 4]{CJLCPAM} (see also \cite{NM}); we omit the details here.

\section{An alternative norm}\label{lingyizhong}
In this section, we introduce another function space to solve the pseudo-plastic fluid boundary layer equations \eqref{feiniu}. 
Denote derivative (only in space) operator by
$$D_{xy}^{\al}=\px^{\al_1}\py^{\al_2}
~~\text{for}~~\al=(\al_1,\al_2)\in\N^2,~~
|\al|=\al_1+\al_2.$$
Then define $H^{s,\gamma}_{\si,\de}$ 
$$
\begin{aligned}
H_{\sigma, \delta}^{s, \gamma}:= & \bigg\{\omega: \Om \rightarrow \mathbb{R}:\|\omega\|_{H^{s, \gamma}}<+\infty,(1+y)^\sigma \omega \geq \delta, \\
& \text { and } \sum_{|\al|\leq 2}\left|(1+y)^{\sigma+\alpha_2} D_{xy}^{\al}\omega\right|^2 \leq \frac{1}{\delta^2}\bigg\},
\end{aligned}
$$
where $s\geq4,~\ga\geq1,~\si>\ga+\frac{1}{2},~\de\in(0,1)$.

Moreover, we define $\|\om\|_{H^{s, \gamma}_g}$ accordingly,
$$
\|\om\|_{H^{s, \gamma}_g}^2=
\sum_{\substack{|\al|\leq s \\ \al_1\leq s-1}}
\|D_{xy}^{\al}\om\|^2_{L^2_{\ga+\al_2}}
+
\|g_{s}\|^2_{L^2_{\ga}},
$$
where $g_{s}:=\px^s\om-a\px^s(u-U)$ and $a:=\frac{\py\om}{\om}$. 

Then, using the above function spaces, we can obtain a result similar to \cite[Theorem 2.2]{NM}, as shown below:
\begin{Theorem}\label{zhuding2}
Assume $\frac{1}{3}<n<1$. Let $s\geq4$ be an even integer, $\ga\geq1,~\ga+\frac{1}{2}<\si
\leq\min\{\frac{1}{1-n},\ga+\frac{2}{3}\}$ and
$\de\in(0,\frac{1}{2})$. And we suppose the outer flow $U$ satisfies
\begin{equation}\label{wailiushangjie2}
M:=\sup\limits_{t}\sum\limits_{i=0}^{[\frac{s+9}{2}]}
\|\pt^i U\|_{W^{s-2i+9,\ty}(\T)}<+\ty.
\end{equation}
Assume that the initial velocity  $u_0-U|_{t=0}\in H^{s,\ga-1}$ and the initial vorticity $\om_0:=\py u_0\in H_{\sigma, 2\delta}^{s, \gamma}$.
In addition, when $s=4$, we further assume that $\de>0$ is chosen small enough such that
$$
\|\om_0\|_{H^{s, \gamma}_g}\leq \frac{1}{192\tilde{C}}\de^{-1},
$$
where the universal constant $\tilde{C}$ is the same as the in Lemma \ref{chazhi}.
Then there exist a time $T>0$ and a unique solution $(u,v)$ to the pseudo-plastic fluid boundary layer equations \eqref{feiniu} such that
\begin{equation*}
  u-U\in L^{\ty}([0,T];H^{s,\ga-1})\cap C([0,T];H^s-w)
\end{equation*}
and the vorticity 
\begin{equation*}
  \om:=\py u\in L^{\ty}([0,T];H_{\sigma, \delta}^{s, \gamma})\cap C([0,T];H^s-w),
\end{equation*}
where $H^s-w$ denotes the space $H^s$ endowed with its weak topology. 

Furthermore, the vorticity $\om$ satisfies all estimates stated in Proposition \ref{xianyanguji222}.
\end{Theorem}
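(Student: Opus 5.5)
The plan is to follow the scheme of \cite[Theorem 2.2]{NM}, replacing the space-time a priori estimate (Proposition \ref{xianyanguji}) by a purely spatial analogue, presumably Proposition \ref{xianyanguji222}, stated in the norm $\|\cdot\|_{H^{s,\gamma}_g}$. Then a regularized problem is solved and the estimate is passed through a compactness argument.

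\textbf{Step 1: spatial a priori estimate.} I would redo Section \ref{zhumingti} with $D^{\al}$ replaced by $D^{\al}_{xy}$ and with the single good unknown $g_s=\px^s\om-a\px^s(u-U)$. The interior terms $K_2$--$K_5$ and $S_2$--$S_{10}$ are handled exactly as before. These estimates use only $(1+y)^\si\om\in[\de,\de^{-1}]$, the bounds \eqref{K121}, \eqref{henyouyong}, the decay from Proposition \ref{shuaijian} (now in $x,y$ only), and the Fa\`a di Bruno expansion \eqref{K41}. The restriction $\si\le\min\{\frac1{1-n},\ga+\frac23\}$ enters exactly as in \eqref{K46} and \eqref{K460}.

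The real change is the boundary term $K_1^4$ when $\al_2=s$ or $\al_2=s-1$. Time derivatives are not available, so I cannot use \eqref{K146} with $\pt D^{\tilde\al}\om$ estimated in the energy norm. Instead I would follow \cite{NM}: use the equation at $y=0$ to trade $\py^2$ for $\px$, $\pt$ and lower-order terms, and use the boundary condition $n\om^{n-1}\py\om|_{y=0}=\px p=-\pt U-U\px U$ repeatedly. This reduces even-order normal derivatives at the boundary to tangential derivatives of $U$, and explains why $s$ must be even. Each reduction consumes up to two time derivatives of $U$, which explains the norm in \eqref{wailiushangjie2} with $[\frac{s+9}{2}]$ time derivatives.

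The nonlinear viscosity $\om^{n-1}$ makes these boundary identities messier than in the Prandtl case. Still, $\om|_{y=0}$ is bounded above and below, so the terms stay polynomial in the norm. I expect this boundary reduction to be the main obstacle, and I would try to organize it by induction on the number of normal derivatives.

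\textbf{Step 2: $L^\infty$ control and closure.} The quantity $I(t)$ is controlled by the maximum principle (Lemma \ref{jida}) as in \eqref{iiii2222}. The boundary value is bounded by Lemma \ref{chazhi}, as in \eqref{bianjie11}, and the lower bound comes from \eqref{zuihou2}. When $s\ge6$ I use the time-integrated version \eqref{bianjie22}. This does not apply when $s=4$, since $\pt\py^2 D^\al\om$ is then not controlled. So for $s=4$ I use \eqref{bianjie11} together with the smallness hypothesis $\|\om_0\|_{H^{s,\gamma}_g}\le\frac{1}{192\tilde C}\de^{-1}$: for short time it gives $6\tilde C^2A^2\le\frac1{\de^2}$ with margin. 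Starting from $\om_0\in H^{s,\ga}_{\si,2\de}$, continuity then gives $T>0$ such that $\om$ stays in $H^{s,\ga}_{\si,\de}$.

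\textbf{Step 3: existence and uniqueness.} I would regularize by adding $\epsilon^2\px^2$, which is allowed by Lemmas \ref{jida} and \ref{jixiao}, and mollify the data while preserving compatibility. Local solutions of the regularized problem exist by standard parabolic theory. The uniform estimates of Step 2 hold independently of $\epsilon$.

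Passing to the limit uses weak-$*$ compactness in $L^\infty H^{s}$ together with Aubin--Lions; this yields the $C([0,T];H^s\text{-}w)$ continuity. Uniqueness follows from an $L^2_\ga$ energy estimate for the difference of two solutions written in the $g$ form at order zero. There the difference of the viscosities $\om_1^{n-1}-\om_2^{n-1}$ is handled by the mean value theorem and the two-sided bounds on $(1+y)^\si\om_i$.
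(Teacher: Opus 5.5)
\textbf{Overall.} Your architecture matches the paper's, which also only sketches this proof. Proposition~\ref{xianyanguji222} is obtained by redoing Section~\ref{zhumingti} with spatial derivatives only, using an inductive reduction of boundary derivatives (Lemma~\ref{bianjiejiangjie}) for the boundary terms. $I(t)$ is controlled by the maximum principle, with the smallness hypothesis doing the work when $s=4$. Existence and uniqueness then follow by regularization as in \cite{NM}. However, two steps are carried over from Section~\ref{zhumingti} in a form that does not survive the change of function space.

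\textbf{First gap: time derivatives are no longer in the norm or in the admissible class.} The lower bound \eqref{zuihou2} rested on $\|(1+y)^{\si}\pt\om\|_{L^\infty}\le\de^{-1}$. That holds only because $\pt\om$ is one of the $D^\al\om$, $|\al|\le2$, constrained in $\mH^{s,\ga}_{\si,\de}$; $H^{s,\ga}_{\si,\de}$ contains no such bound. Likewise \eqref{bianjie22} uses $\|\pt D^\al\om\|_{L^2}$, $\|\pt\px D^\al\om\|_{L^2}$ and $\|\pt\py^2D^\al\om\|_{L^2}$, which $\|\om\|_{H^{s,\ga}_g}$ does not control. So neither ``the lower bound comes from \eqref{zuihou2}'' nor ``use \eqref{bianjie22} for $s\ge6$'' is available as stated.

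In \eqref{bianjie22} you must trade $\pt$ for spatial derivatives through the equation. This costs two derivatives, hence $s\ge6$, and produces the factor $C(1+A)A^2$ in \eqref{zuihou4444}.

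For the lower bound, the paper follows \cite{NM} and applies the minimum principle, Lemma~\ref{jixiao}, to $(1+y)^\si\om$. This is what yields the specific estimate \eqref{zuihou2222} that the theorem asserts. Alternatively, you could bound $(1+y)^\si|\pt\om|\le C(1+M+A)$ pointwise from the equation, using \eqref{vdeguji}, \eqref{K121} and $\si\le\frac{1}{1-n}$. That keeps $\om\in H^{s,\ga}_{\si,\de}$ for short time, but it is a different inequality from \eqref{zuihou2222}.

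\textbf{Second gap: the boundary reduction is needed for more indices than you say, and for the opposite parity.} Without time derivatives, \eqref{L1} closes $K_1^4$ only for $|\al|\le s-1$. For \emph{every} $|\al|=s$ with $\al_2\ge1$ it would need $\py^2D^\al\om$, which has order $s+2$. Moving $x$-derivatives along $\T$ does not lower the total order $2s+1$. So the reduction cannot be confined to $\al_2\in\{s-1,s\}$.

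What the boundary condition determines are the \emph{odd} normal derivatives $\py^{2k+1}\om|_{y=0}$, $k\le s/2$. Lemma~\ref{bianjiejiangjie} expresses them through $\pt^k\px p$ and products of lower-order spatial derivatives; the time derivatives created in the induction are converted back by \eqref{alwodu} at $y=0$. The even ones are tied to time derivatives of the unknown trace $\om|_{y=0}$. Hence:
\begin{itemize}
\item for even $\al_2$, you reduce $\py D^\al\om|_{y=0}$;
\item for odd $\al_2$, you reduce $D^\al\om|_{y=0}$ and then move one $\px$ onto it from $\py D^\al\om$, which requires $\al_1\ge1$.
\end{itemize}
The index $\al=(0,s)$ admits neither option unless $s$ is even, and this is where the evenness of $s$ enters.
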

The proof of Theorem \ref{zhuding2} relies on the following a priori estimates. 
\begin{Proposition}\label{xianyanguji222}
Assume $\frac{1}{3}<n<1$. Let $s\geq4$ be an even integer, $\ga\geq1,~\ga+\frac{1}{2}<\si
\leq\min\{\frac{1}{1-n},\ga+\frac{2}{3}\}$ and
$\de\in(0,\frac{1}{2})$.
Assume all the hypotheses for $U$ and $(u_0,\om_0)$ given in Theorem \ref{zhuding2} hold.
If $(u,v,\om)$ is a classical solution of \eqref{wodu} in $[0,T]$ and satisfies
\begin{equation}\label{jiasheeee}
\om\in L^{\ty}(0,T;H^{s,\gamma}_{\si,\de}),~\py\om\in L^{2}(0,T;H^{s,\gamma}).
\end{equation}
Then, there exists a positive constant $C$, depending only on $s,~\ga,~\si,~\de$ and $n$, such that for small time,
\begin{equation}\label{zuihou1111}
\begin{split}
\|\om\|_{H^{s, \gamma}_g}^2
\leq Q(t), 
\end{split}
\end{equation}
where
\begin{equation}\label{qtttt}
\begin{split}
Q(t):=\{\|\om_0\|_{H^{s, \gamma}_g}^2+
C(1+M)^{2s+2}t \}
&\cdot\left\{
1-Cs\{\|\om_0\|_{H^{s, \gamma}_g}^2+
C(1+M)^{2s+2}t\}^s t
\right\}^{-\frac{1}{s}}
\end{split}
\end{equation}

Moreover, define
\begin{equation}\label{Atttt}
\begin{split}
A(t):=\max_{[0,t]}\|\om\|_{H^{s, \gamma}_g}.
\end{split}
\end{equation}

Then, we have that
\begin{equation}\label{zuihou2222}
\begin{split}
\min\limits_{\Omega}(1+y)^{\si}\om\geq
&\left\{1-C (1+M+A(t))t e^{C (1+M+A(t))t}\right\}\\
&\cdot\left\{
\min\limits_{\Omega}(1+y)^{\si}\om_0-C(1+A(t))A(t)t
\right\},
\end{split}
\end{equation}
and
\begin{equation}\label{zuihou3333}
\begin{split}
\|I(t)\|_{L^{\ty}}
\leq \max\left\{
\|I(0)\|_{L^{\ty}},~
6\tilde{C}^2A^2(t)\right\}e^{C (1+M+A(t))t}
,
\end{split}
\end{equation}
where $I(t):=\sum_{|\alpha| \leq 2}\left|(1+y)^{\sigma+\alpha_2} D_{xy}^\alpha \omega\right|^2$ and the universal constant $\tilde{C}$ is the same as the in Lemma \ref{chazhi}.

In addition, if $s\geq6$, then there also holds
\begin{equation}\label{zuihou4444}
\begin{split}
\|I(t)\|_{L^{\ty}}
\leq \left\{
\|I(0)\|_{L^{\ty}}+C(1+A(t))A^2(t)t
\right\}e^{C (1+M+A(t))t}
.
\end{split}
\end{equation}
\end{Proposition}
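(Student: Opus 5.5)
We will follow the scheme of Section \ref{zhumingti}, now with purely spatial derivatives $D^{\al}_{xy}$ and the single good unknown $g_s=\px^s\om-a\px^s(u-U)$.

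\textbf{Energy estimate.} First we redo Proposition \ref{guji1} for $D_{xy}^{\al}\om$ with $|\al|\leq s$, $\al_1\leq s-1$. The terms $K_1$--$K_5$ are handled exactly as before. The lower bound \eqref{K113}, the bounds \eqref{K121}, \eqref{henyouyong} and the Fa\`a di Bruno/decay argument of \eqref{K41}--\eqref{K46} and \eqref{K410}--\eqref{K460} use only the pointwise information in $H^{s,\ga}_{\si,\de}$ and Proposition \ref{shuaijian}, whose spatial version holds verbatim. The only genuinely different term is the boundary term $K_1^4$ when $|\al|=s$.

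Here we can no longer trade $\py^2 D^{\tilde\al}\om$ for $\pt D^{\tilde\al}\om$ as in \eqref{K146}, since time derivatives are not in the norm. Instead we follow \cite{NM} and use that $s$ is even. Writing $\al=(\al_1,\al_2)$, if $\al_2$ is even we reduce the order at $y=0$ by iterating the boundary relation $\eqref{wodu}_3$ together with the equation restricted to $y=0$ (where $u=v=0$). Each application trades $\py^2$ for $\om^{1-n}\pt$ plus lower-order terms, and $\pt$ is then re-expressed through the equation, so every trace involves at most $s-1$ derivatives of $\om$ and derivatives of $\px p=-\pt U-U\px U$. If $\al_2$ is odd, $\py D^{\al}_{xy}\om$ is an even normal derivative and is treated symmetrically. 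This is why $M$ must contain $W^{s-2i+9,\ty}$ norms of $\pt^iU$.

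The $g_s$ estimate is Proposition \ref{guji2} with $\al=(0,s,0)$ only; the boundary term $S_1^4$ is treated as in \eqref{gabianjie}--\eqref{sufu}, since $\om^{n-1}\px^s\py\om|_{y=0}$ is given by $\px^{s+1}p$. Using the spatial analogue of \eqref{chabuduo} from \cite[Appendix A]{NM}, we get $\frac{d}{dt}\|\om\|^2_{H^{s,\ga}_g}\leq C(1+M+\|\om\|_{H^{s,\ga}_g})^{2s+2}$, and ODE comparison yields \eqref{zuihou1111}--\eqref{qtttt}. No polynomial in the initial data appears, since no time derivatives of the data are involved.

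\textbf{Lower bound \eqref{zuihou2222}.} We cannot integrate $\pt\om$ in time as in \eqref{omdebianjie}. Instead we set $H=(1+y)^{\si}\om$, which satisfies an equation of the form \eqref{bbll} with $|\al|=0$ and $F=0$:
\[
\{\pt+u\px+(v-E)\py-n\om^{n-1}\py^2\}H=D H,\qquad |D|\leq C(1+M+A(t)).
\]
Lemma \ref{jixiao} then gives $\min H(t)\geq(1-\la te^{\la t})\kappa(t)$ with $\la=C(1+M+A)$. The boundary minimum is handled by
\[
H(\tau)|_{y=0}\geq\om_0|_{y=0}-\int_0^\tau\|\pt\om|_{y=0}\|_{L^\ty}.
\]
At $y=0$ we substitute the equation, $\pt\om=n\om^{n-1}\py^2\om+n(n-1)\om^{n-2}|\py\om|^2$, and bound it via Lemma \ref{chazhi} by $C(1+A)A$. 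This produces the factor $\min(1+y)^{\si}\om_0-C(1+A)At$.

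\textbf{Estimates for $I$.} \eqref{zuihou3333} follows exactly as in \eqref{iiii2222}--\eqref{bianjie11}. For \eqref{zuihou4444} we mimic \eqref{bianjie22}, but bound $\pt D_{xy}^{\al}\om$ by substituting the equation, which costs two extra $y$-derivatives and is why $s\geq6$ is needed. This gives $C(1+A)A^2t$.

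\textbf{Main obstacle.} The main obstacle is the boundary term for $|\al|=s$ with high normal order. The repeated reduction through $\eqref{wodu}_3$ generates nonlinear trace terms from derivatives of $\om^{n-1}$. These must be controlled with \eqref{youjie}, Fa\`a di Bruno and \eqref{L1}, while keeping the total order at most $s$ and the estimate closable with $\ep\|\py\om\|^2_{H^{s,\ga}}$. We would first check the case $s=4$ by hand, where the extra smallness assumption $\|\om_0\|_{H^{s,\ga}_g}\leq(192\tilde C)^{-1}\de^{-1}$ should enter only in continuing the bootstrap for $I$.
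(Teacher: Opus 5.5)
Your architecture matches the paper's: spatial energy estimates for $D^{\al}_{xy}\om$ and $g_s$, a trace-reduction identity for odd normal derivatives (the paper's Lemma \ref{bianjiejiangjie}), and the minimum/maximum principles of \cite{NM} for $(1+y)^{\si}\om$ and $I$. However, the boundary term is not handled when $|\al|=s$ and $\al_2$ is odd, which you dismiss as ``treated symmetrically''. The two cases are not symmetric. For $\al_2$ odd, the only reducible factor in $\int_{\T}(\om^{n-1}D^{\al}_{xy}\om\,\py D^{\al}_{xy}\om)|_{y=0}dx$ is $D^{\al}_{xy}\om=\px^{\al_1}\py^{\al_2}\om$. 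The other factor, $\px^{\al_1}\py^{\al_2+1}\om$, has even normal order, so no trace identity in spatial derivatives exists for it; you cannot convert $\py^2$ into $\pt$, since time derivatives are not in the norm. It also has total order $s+1$, so \eqref{L1} or \eqref{L1.5} would need $\|\py^2D^{\al}_{xy}\om\|_{L^2}$, i.e.\ $s+2$ derivatives, which the dissipation $\|\py\om\|^2_{H^{s,\ga}}$ does not provide.

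What is missing is a tangential integration by parts on $\{y=0\}$. Let $R$ be the reduced expression for $\px^{\al_1}\py^{\al_2}\om|_{y=0}$; its factors have order at most $\al_1+\al_2-1=s-1$. Then write $\int_{\T}\om^{n-1}R\,\px^{\al_1}\py^{\al_2+1}\om\,dx=-\int_{\T}\px(\om^{n-1}R)\,\px^{\al_1-1}\py^{\al_2+1}\om\,dx$. After this every trace has order at most $s$, and \eqref{L1} closes with $\ep\|\py\om\|^2_{H^{s,\ga}}$. This step needs $\al_1\geq1$, and that is exactly where the evenness of $s$ enters: $\al_2$ odd and $|\al|=s$ even force $\al_2\leq s-1$. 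For odd $s$ the index $\al=(0,s)$ breaks the argument. You invoke evenness but never use it, and as written your plan fails in this case.

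Some smaller points. In the even case $\al_2=2k$, reducing $\px^{\al_1}\py^{2k+1}\om|_{y=0}$ produces single factors of order up to $\al_1+2k=s$, not $s-1$; this is still controllable by \eqref{L1}. Your description of the reduction should make clear why the order drops. The time derivative $\pt$ is applied to the already-established trace identity for $\py^{2k-1}\om|_{y=0}$, which is legitimate because it holds on $\{y=0\}$ for all $t$. Only afterwards are $\pt$-derivatives of traces of order at most $2k$ replaced via the equation. Trading $\py^2$ for $\pt$ and back through the same equation gains nothing.

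Finally, for $H=(1+y)^{\si}\om$ the source term is not zero: $F_0=n(n-1)(1+y)^{\si}\om^{n-2}|\py\om|^2$. It equals $fH$ with $f=n(n-1)\om^{n-3}|\py\om|^2=O((1+y)^{\si(1-n)-2})$, which is bounded since $\si\leq\frac{1}{1-n}$. So Lemma \ref{jixiao} still applies. With that correction your derivation of \eqref{zuihou2222} is fine, and so are your arguments for \eqref{zuihou3333} and \eqref{zuihou4444}.
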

Based on Proposition \ref{xianyanguji222}, a standard regularization procedure yields the local-in-time existence and uniqueness. The reader may refer to \cite{NM} for a detailed account of the regularization procedure.

\textbf{Proof of Proposition \ref{xianyanguji222}.}
The proof of Proposition \ref{xianyanguji222} is similar to that of Proposition \ref{xianyanguji}, with the main differences lying in two aspects: the estimation of boundary term and the derivation of \eqref{zuihou2222}-\eqref{zuihou4444}. For the sake of brevity, we do not present the complete proof here, but only provide the treatment of the differences.

For the derivation of \eqref{zuihou2222}-\eqref{zuihou4444}:
we should use the classical maximum principle for parabolic equations to prove \eqref{zuihou2222}-\eqref{zuihou4444}, as is done in \cite{NM}.

For the estimation of boundary term:
In Section \ref{zhumingti}, we directly use the equation to reduce the order of the boundary terms (see \eqref{K146}); however, here, because the definition of the norm does not include time derivatives, we cannot use this method. To this end, we need to derive precise boundary conditions, as shown below:
\begin{Lemma}\label{bianjiejiangjie}
We have the following at boundary $y=0$,
\begin{equation}\label{yijie}
\py\om|_{y=0}=\frac{1}{n}\om^{1-n}|_{y=0}\px p.
\end{equation}
And for any $1\leq k\leq \frac{s}{2}$, there holds
\begin{equation}\label{gaojie}
\begin{split}
\py^{2k+1}\om|_{y=0}=&\frac{1}{n^{k+1}}\om^{k+1-(k+1)n}|_{y=0}\pt^k\px p\\
&+\sum_{j=2}^{2k+1}\sum_{\rho\in A_k^j}
Q_{k,\rho}(\om,\px p,\pt\px p,\cdots,\pt^{k-1}\px p)
\prod_{i=1}^{j}D^{\rho^i}\om|_{y=0},
\end{split}
\end{equation}
where $Q_{k,\rho}(\om,\px p,\pt\px p,\cdots,\pt^{k-1}\px p)$ denotes a polynomial in $(\px p,\pt\px p,\cdots,\pt^{k-1}\px p)$ of degree at most $k$, whose coefficients may be constants or powers of $\om$.
And $A_k^j:=\{\rho:=(\rho^1,\rho^2,\cdots,\rho^j)\in \mathbb{N}^{2j};1\leq|\rho^i|=\rho^i_1+\rho^i_2\leq 2k;
\sum_{i=1}^{j}|\rho^i|\leq 2k+1
\}$.
\end{Lemma}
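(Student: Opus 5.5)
The plan is to derive \eqref{yijie} directly from the Neumann-type boundary condition $\eqref{wodu}_3$, and then to prove \eqref{gaojie} by induction on $k$. At each step we differentiate the vorticity equation $\eqref{wodu}_1$ an odd number of times in $y$, restrict to $y=0$, and repeatedly use $u|_{y=0}=v|_{y=0}=0$. For \eqref{yijie}, $\eqref{wodu}_3$ gives $n\om^{n-1}\py\om|_{y=0}=\px p$, and since $\om>0$ we can divide by $n\om^{n-1}$.

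\textbf{Boundary identities.} First we record the facts used throughout. From $u|_{y=0}=0$ we get $\px^j u|_{y=0}=0$. From $v=-\py^{-1}\px u$ we get $v|_{y=0}=0$, $\py v|_{y=0}=-\px u|_{y=0}=0$, and $\py^{m}v|_{y=0}=-\px\py^{m-2}\om|_{y=0}$ for $m\geq2$. Likewise $\py^m u=\py^{m-1}\om$ for $m\geq1$.

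Hence, after applying $\py^{m}$ to $u\px\om+v\py\om$ and setting $y=0$, every surviving term is a product of at least two factors of the form $D^{\rho}\om$, with total spatial order at most $m+2$. The same holds for the terms $\py^m(\om^{n-2}|\py\om|^2)$ and $\py^m\big((\om^{n-1})\py^2\om\big)$, except for the single top term $\om^{n-1}\py^{m+2}\om$. All other terms have at least two factors $D^{\rho}\om$ with $|\rho|\geq1$ (by Lemma \ref{faa} applied to $\om^{n-1}$), and their coefficients are powers of $\om$.

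Taking $m=0$ gives the time-derivative rule at the boundary:
$$\pt\om|_{y=0}=n\om^{n-1}\py^2\om+n(n-1)\om^{n-2}|\py\om|^2\big|_{y=0}.$$
More generally, $\pt D^{\rho}\om|_{y=0}$ equals $D^{\rho}$ of the right-hand side of $\eqref{wodu}_1$ at $y=0$. This expresses it through purely spatial derivatives of total order at most $|\rho|+2$.

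\textbf{Inductive step.} Apply $\py^{2k-1}$ to $\eqref{wodu}_1$ and evaluate at $y=0$. Isolating the only linear top-order term yields
$$n\om^{n-1}\py^{2k+1}\om|_{y=0}=\pt\py^{2k-1}\om|_{y=0}+\{\text{products of }\geq2\text{ factors }D^{\rho}\om,\ |\rho|\leq 2k,\ \textstyle\sum|\rho|\leq 2k+1\}.$$
The term $\pt\py^{2k-1}\om|_{y=0}$ is handled by differentiating in $t$ the identity \eqref{gaojie} at level $k-1$, or \eqref{yijie} when $k=1$.

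Three kinds of terms arise:
\begin{itemize}
\item The leading term produces $n^{-k}\om^{k-kn}\pt^{k}\px p$. Dividing by $n\om^{n-1}$ gives exactly $n^{-(k+1)}\om^{(k+1)-(k+1)n}\pt^k\px p$.
\item Differentiating the coefficient $\om^{k-kn}$ or the $\om$-dependence inside $Q_{k-1,\rho}$ produces factors $\pt\om$. Differentiating $Q_{k-1,\rho}$ in $t$ raises its polynomial degree by at most one and its highest time derivative of $\px p$ to $\pt^{k-1}\px p$. In both cases the factor $\pt\om$ is replaced by the order-two spatial expression above, so these terms become products with $j\geq2$.
\item Differentiating a product $\prod_i D^{\rho^i}\om$ with $|\rho^i|\leq 2k-2$ and $\sum|\rho^i|\leq 2k-1$ gives a factor $\pt D^{\rho^i}\om$, which is again replaced by spatial derivatives of order at most $|\rho^i|+2\leq 2k$. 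The total order stays at most $2k+1$.
\end{itemize}
Finally, dividing everything by $n\om^{n-1}$ only changes the $\om$-power coefficients, which gives \eqref{gaojie} at level $k$.

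\textbf{Main obstacle.} The delicate part is the bookkeeping that ensures every remainder term is genuinely a product with $j\geq2$, each $|\rho^i|\leq 2k$ and $\sum|\rho^i|\leq 2k+1$. In particular, no lone $\py^{2k+1}\om$ or $\py^{2k}\om$ term should survive with only an $\om$-power in front.

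I would check this first on the $v$-terms, where $\py^2 v|_{y=0}=-\px\om$ is linear. It always multiplies another factor $\py^{\cdot}\om$, so it still contributes a product. Second, I would check the substitutions $\pt D^{\rho}\om\mapsto$ spatial terms: the order increase by $2$ is compensated by the fact that $\pt$ removed one order from the level-$(k-1)$ count, since $2k-1+2=2k+1$. Finally, the degree count for $Q_{k,\rho}$ in $(\px p,\dots,\pt^{k-1}\px p)$ follows because each inductive step multiplies by at most one new factor $\pt^{j}\px p$.
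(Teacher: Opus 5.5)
Your route is essentially the paper's: differentiate the equation an odd number of times in $y$, restrict to $y=0$ using $u=v=\py v=0$ and $\py u=\om$, get $\pt\py^{2k-1}\om|_{y=0}$ by differentiating the level-$(k-1)$ identity in $t$, and eliminate every $\pt D^{\rho}\om|_{y=0}$ with the equation. The paper first multiplies by $\frac1n\om^{1-n}$ and works with $\pt f+u\px f+v\py f-\py^2\om+g=0$, $f=\frac{1}{n(2-n)}\om^{2-n}$, so the top derivative has coefficient one. Your Leibniz expansion of $\py^{2k-1}(\om^{n-1}\py^2\om)$ followed by division by $n\om^{n-1}$ is equivalent.

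\textbf{The gap.} The bookkeeping step you single out fails: not every remainder is a product of $j\ge2$ factors $D^{\rho^i}\om$ with $|\rho^i|\ge1$.
\begin{itemize}
\item The convection term contributes $(2k-1)\,\py u\,\px\py^{2k-2}\om|_{y=0}=(2k-1)\,\om\,\px\py^{2k-2}\om|_{y=0}$. This is a ``product of two factors'' only if $\om=D^{0}\om$ counts, which $A_k^j$ excludes. So $\om$ must go into the coefficient, and the term has $j=1$.
\item Inserting $\pt\om|_{y=0}=n\om^{n-1}\py^2\om+\cdots$ into the $t$-derivative of $\om^{k-kn}$ in the leading term produces $\om^{c}\,\py^2\om\,\pt^{k-1}\px p$. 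This is again a single factor, so your sentence ``these terms become products with $j\ge2$'' is false.
\end{itemize}
For $k=1$ one gets
\[
\py^3\om|_{y=0}=\frac{1}{n^2}\om^{2-2n}\pt\px p+\frac1n\om^{2-n}\px\om+\frac{1-n}{n}\om^{-n}\px p\,\py^2\om+R,
\]
where $R$ is a sum of products of at least two derivatives of positive order. The third term can be rescued with \eqref{yijie}, but the second cannot. Take data for which, at some boundary point, $\px\om=\epsilon$ while all other derivatives of orders one and two, and $\pt\px p$, vanish. Then the right-hand side of \eqref{gaojie} is $O(\epsilon^2)$, whereas $\py^3\om=\frac1n\om^{2-n}\epsilon$. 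So \eqref{gaojie}, with the sum starting at $j=2$, is false as written, and no argument can establish it.

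\textbf{The paper has the same defect.} Its proof contains the same terms: $\om\px f|_{y=0}=\frac1n\om^{2-n}\px\om$ in \eqref{xinleilei}, and the $l=1$ summand $(2m+1)\,\om\,\px\py^{2m}f|_{y=0}$ of \eqref{gaojiegaixie}. The latter is asserted to lie in $\mathcal{A}_{m+1}$ but does not.

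The fix is to let the sum start at $j=1$, or equivalently to allow the order-zero factor $\om$. This is what Masmoudi--Wong do in the Newtonian case, where $\py^3\om|_{y=0}=\pt\px p+\om\px\om$. With that induction hypothesis your argument goes through unchanged. The single-factor terms it produces have order at most $2k$, so they are strictly lower order than $\py^{2k+1}\om$; this should not affect how the lemma is meant to be used.
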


\hfill $\square$

\textbf{Proof of Lemma \ref{bianjiejiangjie}.}
For \eqref{yijie}, it is exactly the same as the boundary condition $\eqref{wodu}_3$. In order to derive \eqref{gaojie}, first, we rewrite $\eqref{wodu}_1$ as
\begin{equation}\label{gaixie}
\pt f + u\px f + v\py f - \py^2\om +g=0,
\end{equation}
where
\begin{equation}\label{fheg}
f=\frac{1}{n(2-n)}\om^{2-n},\quad g=(1-n)\om^{-1}|\py\om|^2.
\end{equation}
Hence differentiating \eqref{gaixie} with respect to $y$ and then evaluating at $y=0$, we obatin
\begin{equation}\label{xinleilei}
\pt \py f|_{y=0} + \om\px f|_{y=0}  - \py^3\om|_{y=0} +\py g|_{y=0}=0.
\end{equation}
On the other hand, we have
\begin{equation}\label{xinleilei2}
\begin{split}
\pt \py f|_{y=0}=&\pt(\frac{1}{n}\om^{1-n}\py\om)|_{y=0} \\
=&\frac{1-n}{n}\om^{-n}\py\om\pt\om|_{y=0}
+\frac{1}{n}\om^{1-n}\pt\py\om|_{y=0}\\
=&\frac{1-n}{n}\om^{-n}\py\om\{n\om^{n-1}\py^2\om+n(n-1)\om^{n-2}
|\py\om|^2\}|_{y=0}\\
&+\frac{1}{n}\om^{1-n}(\frac{1-n}{n}\om^{-n}\pt\om\px p
+\frac{1}{n}\om^{1-n}\pt\px p)|_{y=0}\\
=&\frac{1-n}{n}\om^{-n}\py\om\{n\om^{n-1}\py^2\om+n(n-1)\om^{n-2}
|\py\om|^2\}|_{y=0}\\
&+\frac{1-n}{n^2}\om^{1-2n}\{n\om^{n-1}\py^2\om+n(n-1)\om^{n-2}
|\py\om|^2\}|_{y=0}\px p\\
&+\frac{1}{n^2}\om^{2-2n}|_{y=0}\pt\px p.
\end{split}
\end{equation}
Substituting \eqref{fheg} and \eqref{xinleilei2} into \eqref{xinleilei}, we justify the formula \eqref{gaojie} for $k=1$.

Next, we will prove formula \eqref{gaojie} by induction on $k$. For notation convenience, we denote
$$
\mathcal{A}_k:=\left\{\sum_{j=2}^{2k+1}\sum_{\rho\in A_k^j}
Q_{k,\rho}(\om,\px p,\pt\px p,\cdots,\pt^{k-1}\px p)
\prod_{i=1}^{j}D_{xy}^{\rho^i}\om|_{y=0}\right\}.
$$
Under this notation, we need to prove $\py^{2k+1}\om|_{y=0}-\frac{1}{n^{k+1}}\om^{k+1-(k+1)n}|_{y=0}\pt^k\px p\in\mathcal{A}_k$.

Assuming that \eqref{gaojie} holds for $k=m$, next, we consider $k=m+1$. Differentiating \eqref{gaixie} with respect to $y$ $2m+1$ times and then evaluating at $y=0$, it yields
\begin{equation}\label{gaojiegaixie}
\begin{split}
\py^{2m+3}\om|_{y=0}=
&\pt \py^{2m+1}f|_{y=0} +\py^{2m+1}g|_{y=0}\\
&+ \sum_{l=1}^{2m+1}\binom{2m+1}{l}\py^{l-1}\om\px\py^{2m+1-l} f|_{y=0}\\ 
&- \sum_{l=2}^{2m+1}\binom{2m+1}{l}\px\py^{l-2}\om\py^{2m+2-l} f|_{y=0}
.
\end{split}
\end{equation}
Obviously, using \eqref{fheg} and Lemma \ref{faa}, one can directly verify that the last three terms belong to $\mathcal{A}_{m+1}$. Hence, we only deal with the term $\pt \py^{2m+1}f|_{y=0}$.

First, by the induction hypothesis, we have the following 
\begin{equation}\label{gaojiem}
\begin{split}
\py^{2m+1}\om|_{y=0}=&\frac{1}{n^{m+1}}\om^{m+1-(m+1)n}|_{y=0}\pt^m\px p\\
&+\sum_{j=2}^{2m+1}\sum_{\rho\in A_m^j}
Q_{m,\rho}(\om,\px p,\pt\px p,\cdots,\pt^{m-1}\px p)
\prod_{i=1}^{j}D_{xy}^{\rho^i}\om|_{y=0}.
\end{split}
\end{equation}
And by Lemma \ref{faa}, it gives
\begin{equation}\label{haonanshou}
\py^{2m+1} f =
\frac{1}{n}\om^{1-n}\py^{2m+1}\om
+
 \sum_{r=2}^{2m+1}\om^{2-n-r} \sum_{\substack{\eta^1 + \cdots + \eta^r = 2m+1 \\ \eta^i \ge 1}}  C_{n,m,r,\eta^1,\cdots,\eta^r}\prod_{i=1}^r \py^{\eta^i}\om.
\end{equation}
Then differentiating \eqref{haonanshou} with respect to $t$ and evaluating at $y=0$, we obtain
\begin{equation}\label{haonanshou2}
\begin{split}
\pt\py^{2m+1} f|_{y=0}
=&\frac{1}{n}\om^{1-n}\pt\py^{2m+1}\om|_{y=0}
+\frac{1-n}{n}\om^{-n}\pt\om\py^{2m+1}\om|_{y=0}\\
&+\sum_{r=2}^{2m+1}\om^{2-n-r} \sum_{\substack{\eta^1 + \cdots + \eta^r = 2m+1 \\ \eta^i \ge 1}} C_{n,m,r,\eta^1,\cdots,\eta^r}\sum_{i_0=1}^{r} \pt\py^{\eta^{i_0}}\om\prod_{i\neq i_0}\py^{\eta^i}\om|_{y=0}\\
&+ \pt\om\sum_{r=2}^{2m+1}\om^{1-n-r}\sum_{\substack{\eta^1 + \cdots + \eta^r = 2m+1 \\ \eta^i \ge 1}}  \tilde{C}_{n,m,r,\eta^1,\cdots,\eta^r}\prod_{i=1}^r \py^{\eta^i}\om|_{y=0}
 .
\end{split}
\end{equation}
Now, by \eqref{gaojiem}, we can obtain
\begin{equation}\label{niuma}
\begin{split}
&\frac{1}{n}\om^{1-n}\pt\py^{2m+1}\om|_{y=0}\\
=
&\frac{1}{n^{m+2}}\om^{m+2-(m+2)n}|_{y=0}\pt^{m+1}\px p
+\frac{m+1-(m+1)n}{n^{m+2}}\om^{m+1-(m+2)n}\pt\om|_{y=0}\pt^m\px p
\\
&+\pt\om\sum_{j=2}^{2m+1}\sum_{\rho\in A_m^j}
\tilde{Q}_{m,\rho}(\om,\px p,\pt\px p,\cdots,\pt^{m-1}\px p)
\prod_{i=1}^{j}D_{xy}^{\rho^i}\om|_{y=0}\\
&+\sum_{j=2}^{2m+1}\sum_{\rho\in A_m^j}
\tilde{\tilde{Q}}_{m,\rho}(\om,\px p,\pt\px p,\cdots,\pt^{m-1}\px p,\pt^{m}\px p)
\prod_{i=1}^{j}D_{xy}^{\rho^i}\om|_{y=0}\\
&+\sum_{j=2}^{2m+1}\sum_{\rho\in A_m^j}
Q_{m,\rho}(\om,\px p,\pt\px p,\cdots,\pt^{m-1}\px p)
\sum_{i_0=1}^{j}\pt D_{xy}^{\rho^{i_0}}\om\prod_{i\neq i_0}D_{xy}^{\rho^i}\om|_{y=0}.
\end{split}
\end{equation}
Combining \eqref{gaojiegaixie}-\eqref{niuma} and the equation satisfied by $\pt D_{xy}^{\rho^{i_0}}\om$ \eqref{alwodu} yields $$\py^{2m+3}\om|_{y=0}-\frac{1}{n^{m+2}}\om^{m+2-(m+2)n}|_{y=0}\pt^{m+1}\px p\in\mathcal{A}_{m+1},$$ which shows Lemma \ref{bianjiejiangjie} holds.

\hfill $\square$

\subsection*{Acknowledgements}
Zhonger Wu is supported by STU Scientific Research Initiation Grant (NTF25028T). Zhong Tan is supported by the National Natural Science Foundation of China, NSFC (No. 12071391, No. 12231016) and Guangdong Basic and Applied Basic Research Foundation (No. 2022A1515010860).

\subsection*{Competing interests}

This work does not have any conflicts of interest.

\end{document}